\setlist[enumerate,1]{label={(\arabic*)}}
\definecolor{navy}{HTML}{03005e}
\setlist[enumerate]{label={\arabic*.}}
\numberwithin{equation}{section}%equation follows section numbering
\theoremstyle{plain}
\newtheorem{theorem}[equation]{Theorem}
\newtheorem*{theorem*}{Theorem}
\newtheorem{lemma}[theorem]{Lemma}
\newtheorem{corollary}[theorem]{Corollary}
\newtheorem{proposition}[theorem]{Proposition}
\newtheorem{ThmAlpha}{Theorem}
\newtheorem{CorAlpha}[ThmAlpha]{Corollary}
\theoremstyle{definition}
\newtheorem{construction}[theorem]{Construction}
\newtheorem{definition}[theorem]{Definition}
\newtheorem*{definition*}{Definition}
\newtheorem{recollection}[theorem]{Recollection}
\newtheorem*{recollection*}{Recollection}
\newtheorem{notation}[theorem]{Notation}
\newtheorem*{notation*}{Notation}
\newtheorem*{observation*}{Observation}
\theoremstyle{remark}
\newtheorem{example}[theorem]{Example}
\newtheorem*{example*}{Example}
\newtheorem{remark}[theorem]{Remark}
\newtheorem{warn}[theorem]{Warning}
\newcommand{\cc}[1]{\mathcal{#1}}
\newcommand{\mm}[1]{\mathrm{#1}}
\newcommand{\bb}[1]{\mathbb{#1}}
\definecolor{DefColor}{HTML}{156315}
\newcommand{\mdef}[1]{\textcolor{DefColor}{#1}}
\newcommand{\tdef}[1]{\mdef{\emph{#1}}}
\newcommand{\twoCAlg}{\mathrm{2CAlg}}
\newcommand{\twoAff}{\mathrm{2Aff}}
\newcommand{\twoSch}{\mathrm{2Sch}}
\newcommand{\qcqs}{\mm{qcqs}}
\newcommand{\Dir}{\mm{Dir}}
\newcommand{\GZar}{\cc{G}_\mm{cZar}}
\newcommand{\GDir}{\cc{G}_\mm{Dir}}
\newcommand{\GZarst}{\cc{G}_{\mm{Zar}}}
\newcommand{\GZarcn}{\GZar^{\mm{cn}}}
\newcommand{\GZarsp}{\GZar}
\newcommand{\GZarspcn}{\GZarcn}
\newcommand{\GDirsp}{\GDir}
\newcommand{\loc}{\mathrm{loc}}
\newcommand{\unit}{\mathbf{1}}
\newcommand{\An}{\cc{S}}
\newcommand{\CAlg}{\mathrm{CAlg}}
\newcommand{\Shv}{\mathrm{Shv}}
\newcommand{\SpSch}{\mathrm{SpSch}}
\newcommand{\End}{\underline{\mathrm{End}}}
\newcommand{\Cat}{\mathrm{Cat}}
\newcommand{\Catperf}{\mathrm{Cat^{perf}}}
\newcommand{\LTop}{\mathrm{LTop}}
\newcommand{\RTop}{\mathrm{RTop}}
\newcommand{\Frm}{\mathrm{Frm}}
\newcommand{\heart}{\ensuremath\heartsuit}
\newcommand{\op}{\mathrm{op}}
\newcommand{\lex}{\mathrm{lex}}
\newcommand{\rig}{\mathrm{rig}}
\newcommand{\perf}{\mathrm{perf}}
\newcommand{\can}{\mm{can}}
\DeclareMathOperator{\vectimes}{\stackrel{\rightharpoonup}{\smash{\times}\rule{0pt}{0.7ex}}}
\DeclareMathOperator{\Spc}{\mathrm{Spc}}
\DeclareMathOperator{\Spec}{\mathrm{Spec}}
\DeclareMathOperator{\map}{\mathrm{map}}
\DeclareMathOperator{\Fun}{\mathrm{Fun}}
\DeclareMathOperator{\Ind}{\mathrm{Ind}}
\DeclareMathOperator{\Pro}{\mathrm{Pro}}
\DeclareMathOperator{\Mod}{\mathrm{Mod}}
\DeclareMathOperator{\Perf}{\mathrm{Perf}}
\DeclareMathOperator{\ho}{\mathrm{ho}}
\DeclareMathOperator{\cofib}{\mathrm{cofib}}
\DeclareMathOperator{\simarrow}{\stackrel{\textstyle\sim\hspace{.2ex}}{\smash{\longrightarrow}\rule{0pt}{0.4ex}}}
\DeclareMathOperator{\et}{\acute{\mathrm{e}}\mathrm{t}}
\DeclareFontFamily{U}{dmjhira}{}
\DeclareFontShape{U}{dmjhira}{m}{n}{ <-> dmjhira }{}
\DeclareRobustCommand{\yo}{\text{\usefont{U}{dmjhira}{m}{n}\symbol{"48}}}
\newcommand{\nwtrans}
{\mathbin{\rotatebox[origin=c]{45}{$\Rightarrow$}}}
\begin{document}

\title{Affineness and reconstruction in higher Zariski geometry}
\author{Anish Chedalavada}
\date{\today}

\begin{abstract}
   We explain how the geometric framework introduced in \href{https://arxiv.org/abs/2508.11621}{\UrlFont{arXiv:2508.11621 [math.AG]}} provides a universal property for the 2-rings of perfect complexes on qcqs spectral or Dirac spectral schemes. As an application, given a qcqs spectral or Dirac spectral scheme $X$ this produces a comparison morphism from $\Spec \Perf_{X}$ to $X$ itself, which is moreover natural in $X$. When $X$ is an ordinary qcqs scheme, this construction supplies a new proof of the Balmer-Thomason reconstruction of $X$ from its space of thick subcategories, assuming the result for noetherian rings due to Neeman. As another application, we find spectral and Dirac spectral enhancements of support varieties arising for 2-rings in representation theory which ``geometrize'' the 2-rings that produce them. For example, given a finite group $G$ over a field $k$, this produces a ``spectral support variety'' $\mathcal{V}_{G}$ such that $\Perf_{\mathcal{V}_{G}}$ maps into the stable module category of $kG$. We derive these results as a corollary of a general affineness criterion for 2-schemes which are covered by the Zariski spectra of rigid 2-rings: this states that such 2-schemes are affine if and only if they are quasicompact and quasiseparated.   \end{abstract}

  \subjclass[2020]{14A20, 18F99, 18G80, 55P42, 55U35}
  \maketitle
  \thispagestyle{empty}

  \vspace{0pt}
   \begin{figure}[h]
   \includegraphics[scale=1.3]{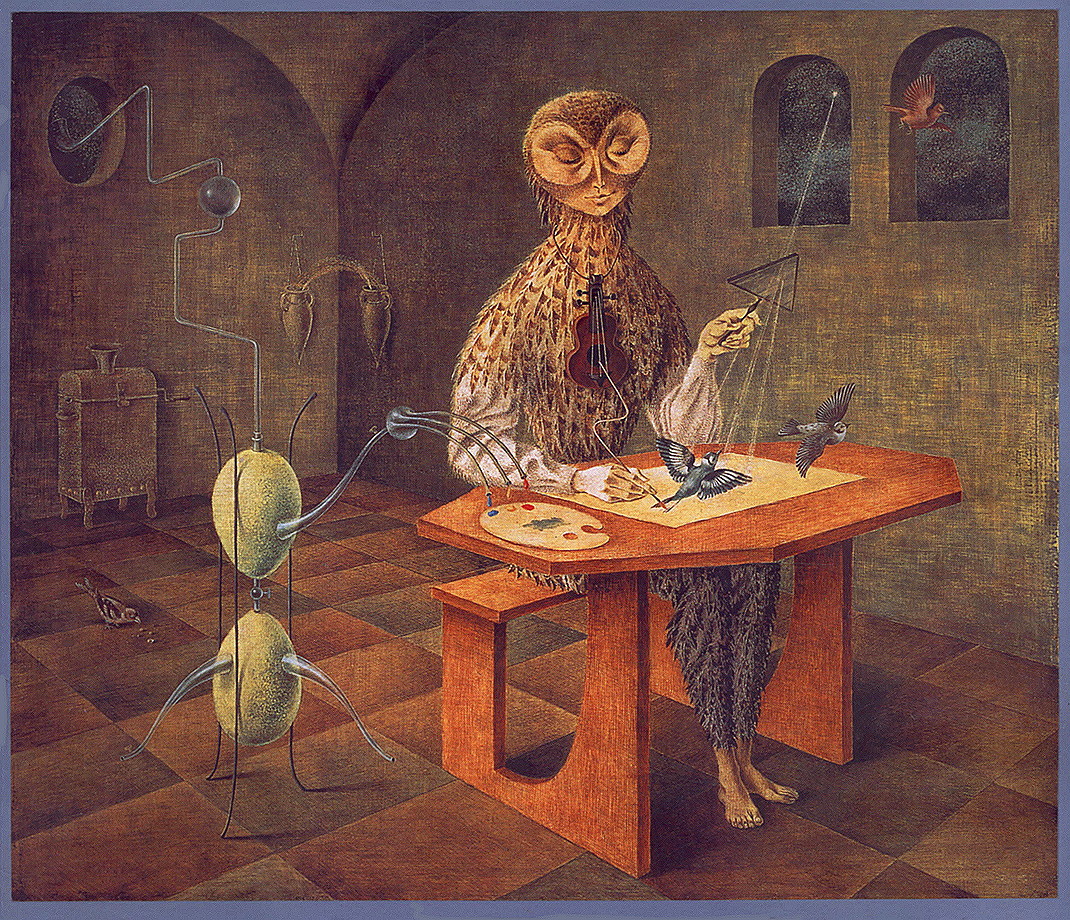}
   \caption[Cover Caption]{\emph{Creation of the Birds,} Remedios Varo, 1957\footnotemark}
 \end{figure}
\footnotetext{{\url{https://www.wikiart.org/en/remedios-varo/creation-of-the-birds}. Accessed on 10/17/2025}.}

\newpage

\setcounter{tocdepth}{1}
\tableofcontents

\section{Introduction}

\subsection{Schemes in classical algebraic geometry } Affine varieties, say over $\bb{C}$, are zero sets of collections of polynomial equations in $\bb{A}^{n}_{\bb{C}}$. Affine varieties have the decidedly nice feature that for two affine varieties $V, W$, regular maps between $V$ and $W$ may be identified with maps of commutative $\bb{C}$-algebras between $\cc{O}(W)$ and $\cc{O}(V)$, their corresponding rings of \emph{regular functions} into the base field $\bb{C}$. Classical algebraic geometry over the complex numbers deals with the study of \emph{complex varieties}, which are topological spaces admitting an open cover by affine varieties. 

Given two complex varieties $M$ and $N$, it is not necessarily the case that maps between $M$ and $N$ are determined by maps between their rings of regular functions into $\bb{C}$, although this \emph{is} the case whenever the target is affine. Since they are locally affine, it is possible to compute maps between $M$ and $N$ by ``gluing'' maps between their corresponding affine open subsets. In this way the geometry of complex varieties is rendered accessible, but being a more general class they afford the construction of interesting objects to map to, or \emph{moduli objects}, which bear import even if one is only interested in affine varieties.

\begin{example*}[Complex projective space]
  The projective space $\bb{P}^{n-1}_{\bb{C}}$ is a complex variety whose points parametrize the linear subspaces of the vector space $\bb{C}^{n}$. Maps from a variety $M$ into $\bb{P}^{n-1}_{\bb{C}}$ exactly parametrizes subspaces $E \subseteq \bb{A}_{\bb{C}}^{n} \times M$ which are fiberwise linear over $M$. 
\end{example*}

  Following Zariski-Grothendieck, let us regard the objects above not merely as topological spaces, but in their following guise.

\begin{recollection*}
  The category of \emph{locally ringed spaces} consists of pairs $(X, \cc{O}_{X})$, where $X$ is a topological space and $\cc{O}_{X}$ is a sheaf of commutative rings on $X$ satisfying a certain locality condition. Any ordinary commutative ring $R$ arises as the global sections of a structure sheaf on a particular topological space $\Spec R$, its \emph{classical Zariski spectrum}. $\Spec R$ along with its structure sheaf is the primary example of a locally ringed space, and the functor sending $R$ to the locally ringed space $\Spec R$ participates in the following adjunction: \begin{equation}\label{eq:classicalzariskispec}
  \Gamma: \{\mm{Locally~ringed~spaces}\} \rightleftarrows \{\mm{Commutative~rings}\}^{\op}: \Spec
  \end{equation}  and the essential image of the functor $\Spec$ is said to comprise exactly the \emph{affine schemes}. 
\end{recollection*}

Affine schemes implement affine varieties, which essentially appear within the Zariski spectra of their rings of regular functions. In this case, the adjunction of \eqref{eq:classicalzariskispec} exactly recovers the characteristic mapping property discussed in the first paragraph. It remains to implement the study of non-affine varieties in this language, for which one makes the following definition. 

\begin{recollection*}
  Let $(X, \cc{O}_{X})$ be a locally ringed space. If $X$ admits an open cover $\{U_{\alpha}\}_{\alpha \in A}$ where each $(U_{\alpha}, \cc{O}_{X}|_{U_{\alpha}})$ may be identified with $\Spec R_{\alpha}$ for some commutative ring $R_{\alpha}$, we say that $(X, \cc{O}_{X})$ is a \emph{scheme}. 
\end{recollection*}

Although the definitions above are motivated by complex algebraic geometry, the generality of the Zariski spectrum applies to the full gamut of interesting commutative rings which arise in arithmetic and geometry. As before, even if one is only concerned with the study of commutative rings, the category of schemes includes several interesting non-affine moduli to map affine objects to:

\begin{example*}[Projective space]\label{ex:projectivespace}
Given a commutative ring $R$, the following set
  \[
    \frac{\{(\cc{L}, \phi) \mid \cc{L} \text{ an invertible module over }R, ~\phi:\bigoplus_{i=1}^{n}R \twoheadrightarrow \cc{L} \text{ an }R\text{-linear surjection}\}}{\{ R-\text{linear isomorphisms which intertwine the given surjection}\} }
  \]
  is naturally identified with the set of maps of locally ringed spaces from $\Spec R$ to the non-affine scheme $\bb{P}^{n-1}_{\bb{Z}}$.
\end{example*}

To summarize, commutative rings arise as the global sections of the structure sheaves on affine schemes. In this form, geometric methods may be applied towards their study, and such methods will essentially employ non-affine objects.

\subsection{Affine moduli in higher Zariski geometry}

\emph{Tensor-triangular geometry}, after \cite{Balmer2011}, is the study of tensor-triangulated categories (or \emph{tt-categories}) via the use of algebro-geometric methods. This approach, at least philosophically, regards tt-categories as arising from the global sections of ``structure sheaf of tt-categories'' on the affine objects in a tt-analogue of algebraic geometry. This is unfortunately only a heuristic: there are problems with treating structure sheaves valued in tt-categories, as triangulated structure is poorly behaved with respect to forming limits in ordinary categories.

To pursue a geometric program which putatively includes non-affine objects, we must replace tensor-triangulated categories with a better behaved notion. To this end, we work in the $\infty$-category of \emph{$2$-rings}, denoted $\twoCAlg$, defined to be the $\infty$-category of idempotent-complete stably symmetric monoidal $\infty$-categories with symmetric monoidal exact functors. In this setting, the idea that $2$-rings appear as the affine objects of a categorified algebraic geometry can be made precise. This is accomplished in \cite{aokiHigherGeometriesTensor}, utilising the formalism of \emph{geometries} introduced in \cite{lurieDerivedAlgebraicGeometryV}.

\begin{recollection*}Let $\twoCAlg$ denote the $\infty$-category of $2$-rings. There is an $\infty$-category of \emph{locally $2$-ringed spaces}, consisting of pairs $X$ a topological space and $\cc{O} \in \Shv(X; \twoCAlg)$ a sheaf of $2$-rings on $X$, where the sheaf $\cc{O}$ is required to satisfy a certain locality condition\footnote{see \hyperref[ssec:zariskigeomprereq]{Subsection~\ref*{ssec:zariskigeomprereq}} for a full discussion}. There is a functor \[\Spec: \twoCAlg^{\op} \to \{\mm{Locally~2\text{-}ringed~spaces}\}\] known as the \emph{Zariski spectrum}, which is the right adjoint in an adjunction of the following form
    \[
      \Gamma: \{\mm{Locally~2\text{-}ringed~spaces}\} \rightleftarrows \twoCAlg^{\op}: \Spec
    \]
    where $\Gamma$ sends a pair $(X, \cc{O_{X}})$ to the global sections of the sheaf $\cc{O}$ on $X$. The essential image of the functor $\Spec$ is said to comprise the \emph{affine $2$-schemes}.
\end{recollection*}

If schemes in classical algebraic geometry provide interesting non-affine objects which apply towards the study of ordinary rings, it seems only natural that their obvious categorification ought have interesting applications to the study of $2$-rings.

\begin{definition*}
  A locally $2$-ringed space $(X, \cc{O}_{X})$ is a \emph{$2$-scheme} if it admits an open cover $\{U_{\alpha}\}$ such that the locally $2$-ringed spaces $(U_{\alpha}, \cc{O}_{X}|_{U_{\alpha}}) \simeq \Spec \cc{K}_{\alpha}$ for $\cc{K}_{\alpha} \in \twoCAlg$. We say it is a \emph{rigid $2$-scheme} if the $\cc{K}_{\alpha}$ can be selected to be \emph{rigid $2$-rings}, see \autoref{def:rigid}.
\end{definition*}

The main result of this paper provides a characterization of rigid affine $2$-schemes among the collection of all rigid $2$-schemes. Recall that a topological space is \emph{quasiseparated} if the intersection of any two quasicompact open subsets is quasicompact.

\begin{ThmAlpha}\label{thmalph:affineness}
  Let $(X, \cc{O}_{X})$ be a rigid $2$-scheme. Then $(X, \cc{O}_{X})$ is an affine $2$-scheme if and only if the underlying topological space $X$ is quasicompact and quasiseparated (or qcqs).
\end{ThmAlpha}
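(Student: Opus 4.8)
The reverse implication is routine: if $(X,\cc{O}_X)\simeq\Spec\cc{K}$, then its underlying space is the Zariski spectrum of the rigid $2$-ring $\cc{K}$, which is a spectral space and in particular qcqs. For the forward implication, let $(X,\cc{O}_X)$ be a qcqs rigid $2$-scheme and put $\cc{K}:=\Gamma(X,\cc{O}_X)$. Since $\Spec$ is fully faithful (equivalently, the counit $\Gamma\circ\Spec\Rightarrow\mathrm{id}$ is an equivalence), $(X,\cc{O}_X)$ is affine precisely when the unit $u_X\colon(X,\cc{O}_X)\to\Spec\cc{K}$ is an equivalence of locally $2$-ringed spaces, and this holds whenever $X$ is itself affine by the triangle identities. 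The plan is to bootstrap from the affine case along a finite affine cover by a Mayer--Vietoris argument.

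\textbf{Affine inputs.} Two facts about the affine theory are needed. First, a quasicompact open $U\subseteq\Spec\cc{K}$ is again an affine rigid $2$-scheme: it should be $\Spec$ of a finite localization of $\cc{K}$ away from the complementary Thomason-closed subset, which remains rigid---the $2$-ring incarnation of the Balmer--Favi description of the Zariski structure sheaf, which should already be available in the ambient higher Zariski geometry. Second, and crucially, $\Spec$ carries the pullback $\cc{K}_U\times_{\cc{K}_{UV}}\cc{K}_V$ of the rigid $2$-rings attached to a two-term Zariski cover $\{U,V\}$ with intersection $U\cap V$ to the pushout $\Spec\cc{K}_U\cup_{\Spec\cc{K}_{UV}}\Spec\cc{K}_V$ in locally $2$-ringed spaces; in other words $\Spec$ takes Zariski pullback squares of rigid $2$-rings to pushout squares.

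\textbf{Mayer--Vietoris and induction.} Granting these, suppose $X=U\cup V$ with $U,V$ open and with $u_U$, $u_V$, $u_{U\cap V}$ equivalences. Gluing of locally $2$-ringed spaces along open immersions---formal, just as in the classical case---realizes $X$ as the pushout $U\cup_{U\cap V}V$, while the sheaf axiom gives $\Gamma(X,\cc{O}_X)\simeq\Gamma(U)\times_{\Gamma(U\cap V)}\Gamma(V)$; applying $\Spec$ and the second input identifies $\Spec\Gamma(X,\cc{O}_X)$ with $U\cup_{U\cap V}V\simeq X$ compatibly with $u_X$, so $u_X$ is an equivalence. Now induct on the size $n$ of a finite affine cover $X=U_1\cup\dots\cup U_n$ (which exists as $X$ is quasicompact); quasiseparatedness guarantees the intersections below are quasicompact, and the inductive claim is that any qcqs rigid $2$-scheme admitting an affine cover of size $\le n$ is affine. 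The case $n=1$ is trivial. For $n>1$, set $U=U_1$ and $V=U_2\cup\dots\cup U_n$: then $V$ is qcqs with an affine cover of size $n-1$, hence affine by the inductive hypothesis, and $U\cap V=\bigcup_{i\ge2}(U_1\cap U_i)$ is qcqs with an affine cover of size $n-1$---each $U_1\cap U_i$ being a quasicompact open of the affine $U_i$, hence affine by the first input---so $U\cap V$ is affine by the inductive hypothesis. Thus $u_U$, $u_V$, $u_{U\cap V}$ are all equivalences, and the Mayer--Vietoris step shows $u_X$ is an equivalence, i.e.\ $X$ is affine.

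\textbf{The main obstacle.} The heart of the argument is the second affine input: that $\Spec$ sends Zariski pullback squares of rigid $2$-rings to pushout squares of locally $2$-ringed spaces---precisely the phenomenon that fails one categorical level down, where qcqs schemes need not be affine (indeed $\Spec$, being a right adjoint out of $\twoCAlg^{\op}$, has no formal reason to take such squares to pushouts). I would establish it via descent for module categories over $2$-rings, so that $\Mod_{\cc{K}_U\times_{\cc{K}_{UV}}\cc{K}_V}\simeq\Mod_{\cc{K}_U}\times_{\Mod_{\cc{K}_{UV}}}\Mod_{\cc{K}_V}$, together with the recovery of the Zariski spectrum and its structure sheaf from these module categories, taking care to identify the comparison map with the canonical one and to check that a Zariski pullback of rigid $2$-rings is again rigid (so that the inductive hypothesis genuinely applies to $V$ and $U\cap V$ above). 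This package of sheaf-theoretic coherences for the $\Spec$ functor is where the real work is concentrated.
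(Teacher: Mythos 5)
Your skeleton coincides with the paper's proof: both arguments run an induction on the size of a finite affine cover, reduce the inductive step to a two-open Mayer--Vietoris gluing, use quasiseparatedness plus the identification of quasicompact opens of $\Spec\cc{K}$ with $\Spec\cc{K}/\langle a\rangle$ (\autoref{thm:zariskiisbalmer}) to make the intersection affine and rigid, and then conclude by comparing the pushout square presenting $X$ with the square obtained by applying $\Spec$ to the pullback square of global sections. However, the step you flag as the ``main obstacle'' is precisely where all of the content lives, and your proposal does not actually close it: you assert that $\Spec$ carries the Zariski pullback $\cc{K}_U\times_{\cc{K}_{UV}}\cc{K}_V$ to the pushout $U\cup_{U\cap V}V$, and propose to prove this via descent for module categories together with ``recovery of the Zariski spectrum and its structure sheaf from these module categories.'' Neither ingredient is available off the shelf here, and the first is close to circular: the equivalence $\Mod_{\cc{K}_U\times_{\cc{K}_{UV}}\cc{K}_V}\simeq\Mod_{\cc{K}_U}\times_{\Mod_{\cc{K}_{UV}}}\Mod_{\cc{K}_V}$ for Karoubi quotients of rigid $2$-rings is essentially equivalent to the base-change statement you are trying to establish (and $1$-affineness of qcqs spectral schemes is derived in this paper as a \emph{consequence} of the affineness theorem, not used as an input). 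Moreover, even granting such a descent statement for module categories, passing from it back to an identification of $\Spec$ of the pullback with the glued locally $2$-ringed space requires a functorial reconstruction of the Balmer spectrum and its structure sheaf from $\Mod_{(-)}$ that you have not supplied.

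What the paper actually proves at this point is a categorical base-change result (\autoref{cor:verdierbasechange}): for a cospan $\cc{K}_1\to\cc{K}_{12}\leftarrow\cc{K}_2$ in which $\cc{K}_2\to\cc{K}_{12}$ is a Karoubi quotient away from $\cc{I}_2$, the projection $\cc{K}_1\times_{\cc{K}_{12}}\cc{K}_2\to\cc{K}_1$ is itself a Karoubi quotient away from $(0,\cc{I}_2,0)$, and the square is co-Cartesian in $\twoCAlg$. This is proved by embedding the honest fiber product into the oriented fiber product $\cc{K}_1\vectimes_{\cc{K}_{12}}\cc{K}_2$, computing mapping spaces there via the Verdier-quotient formula (\autoref{lem:orientedbasechange}, \autoref{lem:secondiso}), and using the Neeman--Thomason $K$-theory extension theorem to handle essential surjectivity up to retracts. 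With this in hand, both projections out of $\Gamma(X,\cc{O}_X)\simeq\cc{L}\times_{\cc{K}_{12}}\cc{K}_n$ are Karoubi quotients whose kernels intersect trivially, so the corresponding opens form an admissible cover of $\Spec\Gamma(X,\cc{O}_X)$; the Cartesian square of spectra is then also co-Cartesian by \autoref{prop:omnibusetale}, and the affinization map $(X,\cc{O}_X)\to\Spec\Gamma(X,\cc{O}_X)$ is an equivalence because it is the identity on the other three vertices of the two pushout squares. If you want to complete your argument, proving this Verdier base-change statement (or citing an equivalent one) is the missing work; the remaining coherences you list (rigidity of the pullback, compatibility of the comparison map with the unit) are indeed routine once it is in place.
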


The result above shows that higher Zariski geometry behaves in stark contrast to classical algebraic geometry, where most non-affine schemes of interest are qcqs, e.g., \hyperref[ex:projectivespace]{projective space} over any base ring and any closed subscheme thereof. We view this as a feature rather than a bug: it demonstrates the category of $2$-rings already contains the representing objects for many moduli problems. In our next result, we utilise our affineness criterion to demonstrate that a particular class of higher Zariski moduli problems arise as affine $2$-schemes. In doing so, we provide a new universal property of the $2$-ring of perfect complexes on a qcqs scheme. 

\begin{recollection*}
  Let $\CAlg$ denote the $\infty$-category of $\bb{E}_{\infty}$-rings, the higher-categorical enhancement of ordinary commutative rings. Recall that there is a limit-preserving functor
  \[
    \cc{R}_{(-)}: \twoCAlg \to \CAlg
  \]
  given by sending a $2$-ring $\cc{K}$ to the endomorphism ring spectrum $\End_{\cc{K}}(\unit)$ of the unit object $\unit \in \cc{K}$. Thus, to any a locally $2$-ringed space $(X, \cc{O})$, one may associate the pair $(X, \cc{R_{O}})$, where $\cc{R_{O}}$ is the sheaf of $\bb{E}_{\infty}$-rings given by
  \[
    \cc{R_{O}}: U \mapsto \cc{R}_{\cc{O}(U)} := \End_{\cc{O}(U)}(\unit).
  \]
  The space $X$ along with this sheaf can be shown to be a \emph{locally spectrally ringed space}: this is the $\infty$-category consisting of pairs $(X, \cc{O})$ of a topological space $X$ equipped with a sheaf $\cc{O}$ of $\bb{E}_{\infty}$ rings on $X$ satisfying a certain locality condition analogous to that of ordinary locally ringed spaces\footnote{see \hyperref[ssec:geometries]{Subsection~\ref*{ssec:geometries}} for a full discussion}. 
\end{recollection*}

The previous recollection yields a restriction functor
  \[
    \{\mm{Locally~2\text{-}ringed~spaces}\} \to \{\mm{Locally~spectrally~ringed~spaces}\}
  \]
given by sending a pair $(X, \cc{O})$ to the pair $(X, \cc{R_{O}})$. This restriction functor will give rise to a natural family of moduli problems:

\begin{definition*}
  Let $X$ be a spectral scheme\footnote{see \autoref{def:spectralschemes} for the full definition.} (of which ordinary schemes are an example). Define the \emph{relative spectrum} of $X$ to be the unique locally $2$-ringed space $\Spec^{2}_{1}X$ satisfying the following:
  \[
    \map((Y, \cc{O}_{Y}), \Spec^{2}_{1}X) \simeq \map((Y, \cc{R}_{\cc{O}_{Y}}), X)
  \]
  for $(Y, \cc{O}_{Y})$ any arbitrary locally $2$-ringed space. In the above, the left hand mapping space is taken in locally $2$-ringed spaces, and the right-hand mapping space is taken in locally spectrally ringed spaces.
\end{definition*}

The restriction morphism from locally $2$-ringed spaces to locally spectrally ringed spaces may also be obtained via the general framework of \emph{geometries} outlined in \cite{lurieDerivedAlgebraicGeometryV}. In Section 2.1 of \emph{loc.\ cit.} the author constructs the relative spectrum of \emph{any} locally spectrally ringed space, and functorially so, as a left adjoint to the restriction functor (with the caveat that one needs to work with locally $2$/spectrally ringed \emph{topoi} in lieu of topological spaces). These results in fully generality are recalled in \hyperref[ssec:zariskigeomprereq]{Subsection~\ref*{ssec:zariskigeomprereq}}.

The upshot is that relative spectra always exist, and our next result show that these moduli problems are in fact associated to affine schemes.

\begin{ThmAlpha}\label{thmalph:qcqsschemesareaffine}
  Given a qcqs spectral scheme $X$, its relative spectrum $\Spec^{2}_{1}X$ is a rigid qcqs $2$-scheme. Hence, it is an affine $2$-scheme by \autoref{thmalph:affineness}. The global sections of $\Spec^{2}_{1}X$ may be naturally identified with the $2$-ring $\Perf_{X}$ of perfect complexes on $X$ (\autoref{def:perfectcomplexes}), furnishing an identification
  \[
    \Spec^{2}_{1}X \simeq \Spec \Perf_{X}.
    \]
  These equivalences are natural in the spectral scheme $X$.
\end{ThmAlpha}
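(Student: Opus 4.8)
The plan is to prove \autoref{thmalph:qcqsschemesareaffine} by first establishing that the relative spectrum $\Spec^{2}_{1}X$ is a rigid $2$-scheme with the correct global sections, and then invoking \autoref{thmalph:affineness} to conclude affineness. The skeleton is to reduce everything to the affine case by a Zariski-descent argument, which is where the hypothesis that $X$ is qcqs does the essential work: quasicompactness lets us pick a \emph{finite} affine cover, and quasiseparatedness controls the intersections so that the Čech nerve lives in affine schemes at every level.

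First I would treat the affine case: if $X = \Spec R$ for an $\bb{E}_{\infty}$-ring $R$, then I claim $\Spec^{2}_{1}(\Spec R) \simeq \Spec \Perf_{R}$. This should follow from unwinding the defining adjunction of the relative spectrum together with the fact (recalled from \cite{lurieDerivedAlgebraicGeometryV}) that the restriction functor from locally $2$-ringed spaces to locally spectrally ringed spaces sends $\Spec \cc{K}$ to $\Spec \cc{R}_{\cc{K}}$, so that the left adjoint — the relative spectrum — sends $\Spec R$ to $\Spec \cc{K}$ for the $2$-ring $\cc{K}$ corepresenting $\map_{\CAlg}(R, \cc{R}_{(-)})$; one then identifies this corepresenting object with $\Perf_{R}$, e.g.\ via the universal property of $\Perf_{R}$ as the free $2$-ring on the $\bb{E}_{\infty}$-ring $R$ (the unit/counit of the adjunction $\cc{R}_{(-)} \dashv \Perf_{(-)}$, using limit-preservation of $\cc{R}_{(-)}$). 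This also shows $\Perf_{R}$ is rigid, since $\Perf_{R}$ is dualizable-generated by the unit, which I would record as an instance of \autoref{def:rigid}.

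Next I would globalize. Choose a finite affine open cover $\{\Spec R_{i}\}$ of $X$; by quasiseparatedness each finite intersection $\Spec R_{i_{0}} \cap \cdots \cap \Spec R_{i_{k}}$ is again qcqs, and (after a further finite affine refinement if $X$ is merely qcqs rather than separated) one arranges the Čech nerve to consist of affine spectral schemes. Since the relative-spectrum functor $\Spec^{2}_{1}$ is a left adjoint between presheaf-type $\infty$-categories of locally ringed spaces it does not a priori preserve the relevant colimits, so instead I would argue on the side of \emph{open immersions}: the relative spectrum of a Zariski-open $\Spec R[1/f] \hookrightarrow \Spec R$ is the corresponding Zariski-open of $\Spec \Perf_{R}$, because $\Perf_{R[1/f]}$ is the Verdier/idempotent-completed localization of $\Perf_{R}$ and this localization is realized by an open immersion of Zariski spectra of $2$-rings (this is exactly the content of the higher Zariski geometry of \cite{aokiHigherGeometriesTensor}). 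Gluing these open immersions along the finite cover exhibits $\Spec^{2}_{1}X$ as a $2$-scheme covered by finitely many $\Spec \Perf_{R_{i}}$, each rigid, with quasicompact quasiseparated underlying space (the underlying space of $\Spec^{2}_{1}X$ being the Zariski/Hochster-type space built from the same cover, hence homeomorphic to something qcqs since $X$ is). Hence $\Spec^{2}_{1}X$ is a rigid qcqs $2$-scheme, and \autoref{thmalph:affineness} applies.

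Finally, affineness gives $\Spec^{2}_{1}X \simeq \Spec \Gamma(\Spec^{2}_{1}X)$, and the global sections are computed as the limit over the Čech nerve of the finite cover: $\Gamma(\Spec^{2}_{1}X) \simeq \lim_{\Delta}\Perf_{R_{\bullet}} \simeq \Perf_{X}$, the last equivalence being the (already available) Zariski descent for $\Perf_{(-)}$ on qcqs spectral schemes together with the computation of the relative spectrum on the affine pieces. Naturality in $X$ is formal: $\Spec^{2}_{1}(-)$ is a functor by construction (it is pointwise left Kan extension / adjoint to restriction), $\Perf_{(-)}$ is functorial by pullback, and the comparison equivalence is built from the unit of the relative-spectrum adjunction, which is a natural transformation; so the whole diagram is natural. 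The main obstacle I anticipate is the gluing step — specifically, verifying that the relative-spectrum functor is compatible with Zariski-open immersions and that the resulting glued object is \emph{literally} the relative spectrum (i.e.\ satisfies the defining mapping-space property globally, not just locally). This requires knowing that locally $2$-ringed spaces form an $\infty$-topos-like setting where objects glue along open covers and mapping spaces satisfy descent in the source, which is available from the geometry framework of \cite{lurieDerivedAlgebraicGeometryV} but needs to be spelled out; quasiseparatedness is precisely what keeps the relevant Čech diagrams within the affine world where the affine case applies.
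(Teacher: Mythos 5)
Your skeleton matches the paper's: establish the affine case $\Spec^{2}_{1}\Spec R \simeq \Spec\Perf_{R}$, globalize along an affine cover using that relative spectra are compatible with \'etale/open immersions (\autoref{prop:omnibusetale}), invoke \autoref{thmalph:affineness}, and identify global sections by descent. However, there are two concrete gaps. First, your parenthetical claim that for a merely qcqs (non-separated) $X$ one can, ``after a further finite affine refinement,'' arrange the \v{C}ech nerve of the cover to consist of affine spectral schemes is false: the pairwise intersection of two affine opens in a non-separated scheme is only quasicompact quasi-affine (e.g.\ the plane with doubled origin has $\bb{A}^{2}\setminus\{0\}$ as an intersection), and no refinement fixes this. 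Your computation $\Gamma(\Spec^{2}_{1}X) \simeq \varprojlim_{\Delta}\Perf_{R_{\bullet}}$ therefore does not directly reduce to the affine case. The repair is a two-step d\'evissage: both $\Gamma(\Spec^{2}_{1}(-))$ and $\Perf_{(-)}$ are sheaves for the canonical topology and agree on affines; a quasi-affine admits a cover by basic opens $D(f_{i})$ whose \v{C}ech nerve \emph{is} affine (since $D(f_{i})\cap D(f_{j}) = D(f_{i}f_{j})$), so they agree on quasi-affines; and a general qcqs $X$ has an affine cover with quasi-affine \v{C}ech nerve. This is exactly how \autoref{lem:sheafifyingperf} proceeds (phrased there as a conservativity statement for restriction to affines).

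Second, to apply \autoref{thmalph:affineness} you must verify that $\Spec^{2}_{1}X$ is a $2$-scheme in the sense of \autoref{def:twoscheme}, which requires its underlying $\infty$-topos to be $0$-localic. You assert the underlying space is ``the Zariski/Hochster-type space built from the same cover,'' but a colimit in $\LTop$ of $0$-localic topoi need not be $0$-localic, so this is not automatic; the paper devotes \autoref{lem:localicdevissage} and \autoref{prop:relativespecislocalic} to it, using crucially that the covering objects $U_{i}$ are $(-1)$-truncated. Relatedly, your stated reason for the affine case --- that restriction sends $\Spec\cc{K}$ to $\Spec\cc{R}_{\cc{K}}$ --- is incorrect (the underlying space of $\Spec\cc{K}$ is the Balmer spectrum, not the Zariski spectrum of $\cc{R}_{\cc{K}}$); the correct and essentially formal route is the composability of adjunctions $\Spec^{\cc{G}'}_{\cc{G}}\circ\Spec^{\cc{G}}\simeq\Spec^{\cc{G}'}$ recorded in \autoref{ex:relativeaffinespectra}, which your remark about corepresenting $\map_{\CAlg}(R,\cc{R}_{(-)})$ does capture. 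With these three points addressed, your argument becomes the paper's proof.
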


We also show that the result above holds in the case of \emph{Dirac} spectral schemes, see \autoref{def:spectralschemes}.

\begin{remark}
\autoref{thmalph:qcqsschemesareaffine} can be regarded as an instance of the ``1-affineness'' philosophy of \cite{gaitsgorySheavesCategoriesNotion2014} for qcqs spectral schemes. In fact, alongside the the descent results of \cite[\S 5]{aokiHigherGeometriesTensor}, \autoref{thmalph:qcqsschemesareaffine} will recover the fact that such spectral schemes are 1-affine, demonstrated in \cite[\S 5]{gaitsgorySheavesCategoriesNotion2014}. It will also supply the expected extension of these results to the Dirac spectral case.
\end{remark}

In \cite{aokiHigherGeometriesTensor}, it is shown that the functor $\Spec$ from $2$-rings to locally $2$-ringed spaces restricts to a fully faithful functor on the full subcategory $\twoCAlg_{\rig} \subseteq \twoCAlg$ of rigid $2$-rings. As an immediate corollary, we obtain the following, which is a new universal property for the $2$-ring of perfect complexes on a spectral scheme.

\begin{CorAlpha}\label{coralph:univprop}
  For a rigid $2$-ring $\cc{K}$ and a qcqs spectral scheme $X$, there is an identification
  \[
    \map_{\twoCAlg}(\Perf_{X}, \cc{K}) \simeq \map((\Spec \cc{K}, \cc{R}_{\cc{O}_{\Spec \cc{K}}}), X)
  \]
  where the right-hand mapping space is taken in locally spectrally ringed spaces. Moreover, these equivalences are natural in both the $2$-ring $\cc{K}$ and the spectral scheme $\cc{X}$.
\end{CorAlpha}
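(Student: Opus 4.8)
The plan is to deduce the formula by concatenating three identifications, each of which is either built into a definition or recalled above: the defining universal property of the relative spectrum $\Spec^{2}_{1}X$, the affineness statement $\Spec^{2}_{1}X \simeq \Spec \Perf_{X}$ of \autoref{thmalph:qcqsschemesareaffine}, and the full faithfulness of $\Spec$ on the subcategory $\twoCAlg_{\rig}$ of rigid $2$-rings established in \cite{aokiHigherGeometriesTensor}. Concretely, I would begin by specializing the defining property of $\Spec^{2}_{1}X$ to the locally $2$-ringed space $(\Spec \cc{K}, \cc{O}_{\Spec \cc{K}})$, which yields
\[
  \map\bigl((\Spec \cc{K}, \cc{O}_{\Spec \cc{K}}), \Spec^{2}_{1}X\bigr) \simeq \map\bigl((\Spec \cc{K}, \cc{R}_{\cc{O}_{\Spec \cc{K}}}), X\bigr),
\]
the right-hand side being exactly the mapping space appearing in the statement. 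It therefore remains to identify the left-hand mapping space with $\map_{\twoCAlg}(\Perf_{X}, \cc{K})$.

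For that, I would first invoke \autoref{thmalph:qcqsschemesareaffine} to replace $\Spec^{2}_{1}X$ by $\Spec \Perf_{X}$, so that the left-hand mapping space above becomes $\map(\Spec \cc{K}, \Spec \Perf_{X})$, taken in locally $2$-ringed spaces. Now $\cc{K}$ is rigid by hypothesis, and $\Perf_{X}$ is a rigid $2$-ring as well — this is part of what \autoref{thmalph:qcqsschemesareaffine} records, since it asserts that $\Spec^{2}_{1}X \simeq \Spec \Perf_{X}$ is an affine \emph{rigid} $2$-scheme, forcing its $2$-ring of global sections $\Perf_{X}$ to be rigid. Applying the full faithfulness of $\Spec$ on $\twoCAlg_{\rig}$ then gives
\[
  \map\bigl(\Spec \cc{K}, \Spec \Perf_{X}\bigr) \simeq \map_{\twoCAlg^{\op}}(\cc{K}, \Perf_{X}) = \map_{\twoCAlg}(\Perf_{X}, \cc{K}),
\]
which completes the chain of equivalences.

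To upgrade this to a natural equivalence, I would observe that each step in the chain is functorial in the relevant variable: the universal property of the relative spectrum, the assignment $\cc{K} \mapsto (\Spec \cc{K}, \cc{R}_{\cc{O}_{\Spec \cc{K}}})$, and the full faithfulness of $\Spec$ are all natural in $\cc{K}$, giving naturality in $\cc{K}$; naturality in $X$ then follows from the naturality clause of \autoref{thmalph:qcqsschemesareaffine} together with the functoriality of the relative-spectrum construction in $X$. I do not expect a serious obstacle here — the corollary really is formal given the inputs. The only point deserving genuine attention, as opposed to routine bookkeeping, is the verification that $\Perf_{X}$ lands in $\twoCAlg_{\rig}$ so that the cited full faithfulness is applicable, and the check that the equivalence of \autoref{thmalph:qcqsschemesareaffine} is compatible with the adjunction defining $\Spec^{2}_{1}X$ and with the full faithfulness identification, so that the composite is coherently natural in both arguments rather than merely objectwise.
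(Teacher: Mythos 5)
Your proposal is correct and follows essentially the same route as the paper: the paper's proof likewise just concatenates the full faithfulness of $\Spec$ on $\twoCAlg_{\rig}$ (its Corollary on subcanonicity) with the identification $\Spec^{\cc{G}'}_{\cc{G}}X \simeq \Spec \Perf_{X}$ and the adjunction defining the relative spectrum, merely composing the equivalences in the opposite order. The one point you flag — rigidity of $\Perf_{X}$ — is handled in the paper by a direct lemma (it is a small limit of the rigid $2$-rings $\Perf_{R_i}$, and $\twoCAlg_{\rig}$ is closed under limits) rather than read off from the rigid-affineness of $\Spec^{2}_{1}X$, but both justifications are valid.
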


As above, we also demonstrate that \autoref{coralph:univprop} holds in the Dirac spectral setting. In the next subsection, we discuss some immediate consequences of this result.

\subsection{Reconstruction of schemes and geometrization of 2-rings} Given a qcqs ordinary scheme $X$, a pioneering result of tensor-triangular geometry due to Thomason \cite{thomasonClassificationTriangulatedSubcategories1997}, building off work of Hopkins \cite{hopkinsGlobalMethodsHomotopy1987} and Neeman \cite{neemanChromaticTower1992}, identifies the underlying space of $\Spec \Perf_{X}$ with the underlying space of $X$.

In \cite{balmerPresheavesTriangulatedCategories2002} Balmer provides a way to recover $X$ as a locally ringed space, by working with a tt-categorical implementation of the structure sheaf on $\Spec \Perf_{X}$. Using the framework above, we are able to recover and enhance his result to include derived objects.

\begin{observation*}
    Let $X$ be a spectral scheme. Then recall that the universal property of \autoref{coralph:univprop} furnishes a comparison map of locally spectrally ringed spaces
  \[
    \gamma_{X}: (\Spec \Perf_{X}, \cc{R}_{\cc{O}_{\Spec \Perf_{X}}}) \to X
  \]
  induced by the identity functor on $\Perf_{X}$, which is moreover natural in the spectral scheme $X$.
\end{observation*}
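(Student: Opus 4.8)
The plan is to extract $\gamma_X$ directly from \autoref{coralph:univprop} and to obtain its naturality as a purely formal consequence of the joint naturality of that universal property; no genuinely new input is required.

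First I would fix $X$ and apply \autoref{coralph:univprop} with $\cc{K} = \Perf_X$ — legitimate since the $2$-ring $\Perf_X$ is rigid by \autoref{thmalph:qcqsschemesareaffine} (for a general spectral scheme one argues identically with the relative spectrum $\Spec^2_1 X$ in place of $\Spec\Perf_X$, using its defining universal property). This yields an equivalence
\[
  \map_{\twoCAlg}(\Perf_X, \Perf_X) \;\simeq\; \map\bigl((\Spec\Perf_X,\cc{R}_{\cc{O}_{\Spec\Perf_X}}),\, X\bigr),
\]
and I would \emph{define} $\gamma_X$ to be the image of $\mathrm{id}_{\Perf_X}$ under it. Because $\Spec$ is fully faithful on $\twoCAlg_{\rig}$, the description ``induced by the identity functor on $\Perf_X$'' is unambiguous; and under the identification $\Spec^2_1 X \simeq \Spec\Perf_X$ of \autoref{thmalph:qcqsschemesareaffine}, this $\gamma_X$ is precisely the counit, evaluated at $X$, of the adjunction between the restriction functor to locally spectrally ringed spaces and the relative spectrum $\Spec^2_1(-)$.

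For naturality I would regard the two sides of \autoref{coralph:univprop} as functors of the pair $(X,\cc{K})$: the left-hand side $\map_{\twoCAlg}(\Perf_X,\cc{K})$ is covariant in $\cc{K}$ and contravariant in $X$ through pullback $f^{*}\colon\Perf_{X'}\to\Perf_X$, while the right-hand side is covariant in $X$ and contravariant in $\cc{K}$ through $\Spec(-)$, and \autoref{coralph:univprop} gives a natural equivalence between them. Given $f\colon X\to X'$, I would then run two diagram chases: one through the naturality square in the $X$-variable (with $\cc{K}$ held at $\Perf_X$) applied to $f$, which sends $\mathrm{id}_{\Perf_X}$ to $f\circ\gamma_X$ and to the image of $f^{*}\in\map_{\twoCAlg}(\Perf_{X'},\Perf_X)$; and one through the naturality square in the $\cc{K}$-variable (with $X$ held at $X'$) applied to $f^{*}$, which sends $\mathrm{id}_{\Perf_{X'}}$ to $\gamma_{X'}\circ\Spec(f^{*})$ and likewise to the image of $f^{*}$. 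Comparing outputs gives $\gamma_{X'}\circ\Spec(f^{*}) = f\circ\gamma_X$, the required compatibility of $\gamma$ with morphisms; to upgrade this objectwise statement to coherent naturality of the whole transformation, I would simply invoke the fact that $\gamma$ \emph{is} the counit of an adjunction, hence coherently natural by construction.

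The one point demanding care — the ``main obstacle,'' such as it is — is that this all rests on \autoref{coralph:univprop} being natural \emph{jointly} in $X$ and $\cc{K}$, and on $X\mapsto\Perf_X$ being a genuine functor into $\twoCAlg_{\rig}$ so that $\mathrm{id}_{\Perf_X}$ varies coherently with $X$, rather than on mere separate or pointwise naturality. Both are already packaged into the relative-spectrum formalism of \cite{aokiHigherGeometriesTensor} and \cite{lurieDerivedAlgebraicGeometryV}, where the relative spectrum is produced as an actual left adjoint and $\gamma$ as its counit; once those constructions are cited the Observation is a one-line consequence, and the diagram chases above serve mainly as an explicit check that the counit agrees with the map named by $\mathrm{id}_{\Perf_X}$ in \autoref{coralph:univprop}.
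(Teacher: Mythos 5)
Your proposal is correct and matches the paper's own argument: the paper likewise obtains $\gamma$ as the counit of the adjunction between the relative spectrum $\Spec^{\cc{G}'}_{\cc{G}}$ and restriction (which is exactly the image of $\mathrm{id}_{\Perf_{X}}$ under the universal-property equivalence), transported along the natural identification $\Spec^{\cc{G}'}_{\cc{G}}X \simeq \Spec\Perf_{X}$, with coherent naturality coming for free from the adjunction. The only nit is attribution: rigidity of $\Perf_{X}$ is \autoref{lem:perfissmall} rather than \autoref{thmalph:qcqsschemesareaffine}, and your remark that for non-qcqs $X$ one must work with the relative spectrum directly is precisely how the paper handles the general case.
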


\begin{ThmAlpha}\label{thmalpha:comparison}
If $X$ is an ordinary qcqs scheme regarded as a spectral scheme, the comparison map $\gamma_{X}$ constructed above is an equivalence.
\end{ThmAlpha}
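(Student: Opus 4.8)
The plan is to show that $\gamma_{X}$ is an equivalence of locally spectrally ringed spaces --- that is, a homeomorphism on underlying spaces together with an equivalence of the associated sheaves of $\bb{E}_{\infty}$-rings --- by reducing to the affine case and treating the two halves separately. The reduction rests on the assertion that, for an open immersion $j\colon U\hookrightarrow X$ of qcqs schemes, $\gamma_{X}$ restricts over $U$ to $\gamma_{U}$; granting this, applying it to the members of a finite affine open cover of $X$, and using that being an equivalence of locally spectrally ringed spaces is local on the target, it suffices to treat $X=\Spec R$ with $R$ an ordinary commutative ring. To prove the assertion one uses naturality of $\gamma$ together with the fact that $\Spec\Perf_{(-)}$ carries such an open immersion to an open immersion of locally $2$-ringed spaces with the expected image and restricted structure sheaf. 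Here the inputs are the Thomason--Neeman localization theorem \cite{thomasonClassificationTriangulatedSubcategories1997}, which presents $\Perf_{U}$ as the idempotent completion of the tt-quotient of $\Perf_{X}$ by the perfect complexes supported on $X\setminus U$, and the description of the Zariski spectrum and its structure sheaf on basic quasicompact opens from \cite{aokiHigherGeometriesTensor}; the descent results of \cite[\S 5]{aokiHigherGeometriesTensor} then glue these local identifications together.

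For the affine case $X=\Spec R$, consider first the underlying space, which is the Balmer spectrum of $\Perf_{R}$. Writing $R\simeq\varinjlim_{i}R_{i}$ as the filtered colimit of its finitely generated --- hence noetherian --- subrings, one has $\Perf_{R}\simeq\varinjlim_{i}\Perf_{R_{i}}$ in $\twoCAlg$, and since the Balmer spectrum carries filtered colimits of $2$-rings to cofiltered limits of spectral spaces, the underlying space of $\Spec\Perf_{R}$ is $\varprojlim_{i}\Spec R_{i}\simeq\Spec R$ --- the middle identification by Neeman's theorem for noetherian rings \cite{neemanChromaticTower1992}, the last by the standard behaviour of $\Spec$ under filtered colimits of rings --- and naturality of the Balmer spectrum and of $\gamma$ identifies this homeomorphism with the map underlying $\gamma_{\Spec R}$. (Alternatively one may cite Thomason's reconstruction theorem for this step; the colimit argument is what keeps the input confined to the noetherian case.) For the structure sheaf, it suffices to compare on the basis of basic opens $D(f)=\Spec R[1/f]$: by the Thomason--Neeman localization theorem the restriction of the structure sheaf of $\Spec\Perf_{R}$ to $D(f)$ is $\Perf_{R[1/f]}$, so the associated sheaf of $\bb{E}_{\infty}$-rings restricts there to $\End_{\Perf_{R[1/f]}}(\unit)\simeq R[1/f]$, agreeing with $\cc{O}_{\Spec R}(D(f))=R[1/f]$, and unwinding the universal property of \autoref{coralph:univprop} identifies the comparison map with the identity. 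Hence $\gamma_{\Spec R}$ is a homeomorphism on spaces and an equivalence on a basis of opens, so an equivalence.

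I expect the main obstacle to be the open-immersion compatibility invoked in the reduction: pinning down precisely that $\gamma_{X}$ restricts to $\gamma_{U}$ over a quasicompact open $U\subseteq X$, equivalently that $\Spec\Perf_{(-)}$ sends open immersions of qcqs schemes to open immersions compatibly with the comparison maps and with the structure sheaves. Its topological half is the content of the Thomason--Neeman localization theorem, but one must also track the various localizations of $\Perf_{X}$ appearing in the Zariski-spectrum construction and their behaviour under restriction --- the kind of bookkeeping the descent formalism of \cite[\S 5]{aokiHigherGeometriesTensor} is built to perform, which I would invoke rather than reprove by hand. Once this is in place, the passage from the affine case to general qcqs schemes is formal.
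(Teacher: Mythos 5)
Your overall skeleton --- localize to an affine cover, pass to noetherian subrings by a filtered (co)limit argument, and quote Neeman for affine noetherian rings --- is the same as the paper's, and your affine case is fine: the paper gets the noetherian reduction from the fact that the relative spectrum is a left adjoint and restriction preserves filtered colimits, while your version (Balmer spectra send filtered colimits of $2$-rings to cofiltered limits of spectral spaces, plus a basic-opens comparison of structure sheaves via Thomason--Neeman localization) accomplishes the same thing more concretely.

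The real divergence, and the place where your proposal has a gap you have correctly flagged but not closed, is the reduction to the affine case. To say that $\gamma_{X}$ ``restricts over $U$ to $\gamma_{U}$'' you need to identify the pullback $(\Spec\Perf_{X},\cc{R}_{\cc{O}})\times_{X}U$ with $(\Spec\Perf_{U},\cc{R}_{\cc{O}})$, i.e.\ you need $\Spec\Perf_{U}\to\Spec\Perf_{X}$ to be an open immersion onto $\gamma_{X}^{-1}(U)$. Thomason--Trobaugh localization gives you $\Perf_{U}\simeq(\Perf_{X}/\Perf_{X}^{X\setminus U})^{\natural}$, but for the induced map on Balmer spectra to be an open immersion onto the expected subset you need the kernel to be generated, as a tt-ideal, by a single object whose support is exactly $X\setminus U$ --- that is, you need the existence of perfect complexes with prescribed closed support, which is precisely the nontrivial first half of Thomason's original argument (see \autoref{rem:differences}, where the paper explains that it deliberately avoids this input and recovers it \emph{a posteriori}). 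Importing it would make your proof go through, but at the cost of assuming a substantial piece of the theorem being reproved. The paper closes this gap differently: by \autoref{thm:affinenessofschemes} the source of $\gamma_{X}$ is the relative spectrum $\Spec^{\cc{G}'}_{\cc{G}}X$ and $\gamma_{X}$ is literally the counit of the adjunction $\Spec^{\cc{G}'}_{\cc{G}}\dashv\mm{res}$, so the compatibility with restriction to an affine open is the formal \'etale base-change statement \hyperref[prop:omnibusetale3]{\autoref*{prop:omnibusetale}.(3)}, with no support computations required. If you want to keep your route, replace the appeal to Thomason--Neeman in the reduction step by this identification with the relative spectrum; the rest of your argument then works.
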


We remark that this also provides a new proof of Thomason's result, assuming Neeman's classification result for the case where $X$ is an ordinary affine noetherian scheme. We touch upon the qualitative differences to Thomason's approach in \autoref{rem:differences}. Finally, we show that \autoref{coralph:univprop} gives a simple method for ``geometrizing'' certain $2$-rings.

\begin{ThmAlpha}\label{thmalph:geometrization}
  Let $\cc{K}$ be a $2$-ring such that the locally spectrally ringed space $(\Spec \cc{K}, \cc{R}_{\cc{O}_{\Spec \cc{K}}})$ is itself a spectral scheme. Then there is a fully faithful embedding
  \[
    \Perf_{(\Spec \cc{K}, \cc{R}_{\cc{O}_{\Spec \cc{K}}})} \hookrightarrow \cc{K}
  \]
  induced by the identity functor on $(\Spec \cc{K}, \cc{R}_{\cc{O}_{\Spec \cc{K}}})$ via the universal property of \autoref{coralph:univprop}. 
\end{ThmAlpha}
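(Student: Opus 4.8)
The plan is to build the comparison functor $f\colon \Perf_{Y} \to \cc K$ — where $Y := (\Spec\cc K, \cc R_{\cc O_{\Spec \cc K}})$ — exactly as the statement prescribes, namely as the image of $\mathrm{id}_{Y}$ under the equivalence of \autoref{coralph:univprop}, and then to verify that it is fully faithful by working locally on an affine open cover of the spectral scheme $Y$, where the problem reduces to a statement about the unit object. Throughout, $|Y| = \Spec\cc K$ denotes the common underlying space, which is spectral and in particular qcqs, and we use — as is already implicit in invoking \autoref{coralph:univprop} — that $\cc K$ is rigid, so that $\cc K \simeq \Gamma(\Spec\cc K, \cc O_{\Spec\cc K})$ and $\cc O_{\Spec\cc K}$ is a sheaf of rigid $2$-rings. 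Since $Y$ is a spectral scheme it admits a basis of affine opens $U$, each quasicompact, with $(U, \cc O_{Y}|_{U}) \simeq \Spec R_{U}$ for $R_{U} := \cc O_{Y}(U) \in \CAlg$.

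The first step is to identify the local pieces. Let $U \subseteq |Y|$ be a quasicompact open. Then $(U, \cc O_{\Spec\cc K}|_{U})$ is a rigid $2$-scheme, and being a quasicompact open of the quasiseparated space $|Y|$ it is itself qcqs; hence by \autoref{thmalph:affineness} it is affine: $(U, \cc O_{\Spec\cc K}|_{U}) \simeq \Spec\bigl(\cc O_{\Spec\cc K}(U)\bigr)$. Applying the restriction functor $(X,\cc O) \mapsto (X, \cc R_{\cc O})$ — which is computed sectionwise — turns this into $\bigl(\Spec(\cc O_{\Spec\cc K}(U)),\, \cc R_{\cc O_{\Spec(\cc O_{\Spec\cc K}(U))}}\bigr) \simeq (U, \cc O_{Y}|_{U})$. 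For $U$ an affine open of $Y$ the right-hand side is the affine spectral scheme $\Spec R_{U}$, so \autoref{coralph:univprop} (via \autoref{thmalph:qcqsschemesareaffine}, which identifies $\Gamma(\Spec\Perf_{R_U}) \simeq \Perf_{R_U}$) supplies a comparison $f_{U}\colon \Perf_{R_{U}} \to \cc O_{\Spec\cc K}(U)$ corresponding to $\mathrm{id}_{\Spec R_{U}}$, which by naturality of the comparison is the restriction of $f$ along $U$. Now, by Zariski descent for perfect complexes on spectral schemes \cite[\S 5]{aokiHigherGeometriesTensor}, $\Perf_{Y}$ is the limit of the $\Perf_{R_{U}}$ over the affine opens $U$ of $Y$, while the sheaf condition for $\cc O_{\Spec\cc K}$ gives $\cc K \simeq \Gamma(\Spec\cc K,\cc O_{\Spec\cc K}) \simeq \lim_{U} \cc O_{\Spec\cc K}(U)$; under these identifications $f \simeq \lim_{U} f_{U}$. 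Since mapping spectra in a limit of stable $\infty$-categories are the limits of the corresponding mapping spectra, a limit of fully faithful exact functors is fully faithful, so it suffices to prove each $f_{U}$ fully faithful.

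This is the affine case: given $R \in \CAlg$ and a rigid $2$-ring $\cc L$ equipped with an equivalence $(\Spec\cc L, \cc R_{\cc O_{\Spec\cc L}}) \simeq \Spec R$, the resulting comparison $g\colon \Perf_{R} \to \cc L$ is fully faithful. The point is that $\Perf_{R}$ is the thick subcategory of itself generated by the unit $\unit = R$ and that $g$, being symmetric monoidal, satisfies $g(\unit) \simeq \unit_{\cc L}$. For an exact functor $g$ out of a stable $\infty$-category thickly generated by one object $\unit$, fully faithfulness follows once the induced map of endomorphism spectra $\underline{\map}(\unit,\unit) \to \underline{\map}(g\unit, g\unit)$ is an equivalence: the full subcategory of objects $x$ such that $\underline{\map}(x,y) \to \underline{\map}(gx,gy)$ is an equivalence for all $y$ is thick, and, by a two-step thickening argument starting from this equivalence, contains $\unit$, hence is everything. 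Here $\underline{\map}_{\Perf_{R}}(\unit,\unit) = R$, and $\underline{\map}_{\cc L}(\unit,\unit) = \cc R_{\cc L} \simeq R$, the latter obtained by applying $\Gamma$ to the equivalence $(\Spec\cc L, \cc R_{\cc O_{\Spec\cc L}}) \simeq \Spec R$ and using that $\cc R_{(-)}$ preserves limits and that $\Spec$ is fully faithful on $\cc L$. Finally, the induced self-map of $R$ is an equivalence because $g$ corresponds to $\mathrm{id}_{\Spec R}$ under the universal property and that property is compatible with passage to global sections, so applying $\cc R_{(-)}$ to $g$ returns $\Gamma(\mathrm{id}_{\Spec R}) = \mathrm{id}_{R}$. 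Hence $g$, and therefore $f$, is fully faithful.

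The main obstacle is the reduction step rather than any hard computation: one must verify that restricting $\cc O_{\Spec\cc K}$ to an affine open $U$ of $Y$ genuinely produces, through \autoref{thmalph:affineness}, a $2$-ring whose associated locally spectrally ringed space is the affine spectral scheme $\Spec\cc O_{Y}(U)$, and — more delicately — that $f$ is natural enough to be recovered as $\lim_{U} f_{U}$. These are statements about the compatibility of the Zariski geometry of $2$-rings with restriction to quasicompact opens and with the $\Gamma \dashv \Spec$ adjunctions, and it is here that the functoriality in \autoref{thmalph:qcqsschemesareaffine} and \autoref{coralph:univprop} together with the descent results of \cite[\S 5]{aokiHigherGeometriesTensor} must be used with care; granting these, the affine case and the assembly are formal.
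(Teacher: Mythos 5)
Your overall strategy is the paper's: construct $f$ from $\mathrm{id}_{Y}$ via \autoref{coralph:univprop}, exhibit $f$ as a limit of local comparison functors over a basis of opens, and prove the local case by observing that the source is thickly generated by $\unit$ while $f$ preserves $\unit$ and its endomorphism ring. Your affine-case argument (two-variable thick-subcategory d\'evissage from the equivalence on $\End(\unit)$) is the same content that the paper packages as \autoref{lem:monogenicreflection}, namely that the counit $\Perf_{\cc{R}_{\cc{L}}}\to\cc{L}$ of the unitation adjunction is the fully faithful inclusion $\mm{Thick}(\unit_{\cc{L}})\subseteq\cc{L}$; either formulation is fine.

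There is, however, one step that does not work as written: you take the limit over the poset of \emph{affine} opens of $Y$ and assert $\Perf_{Y}\simeq\varprojlim_{U\ \mathrm{affine}}\Perf_{R_{U}}$ and $\cc{K}\simeq\varprojlim_{U\ \mathrm{affine}}\cc{O}_{\Spec\cc{K}}(U)$. Affine opens are not closed under finite intersections (the intersection of two affines in a qcqs scheme is in general only quasicompact quasi-affine), so the poset of affine opens is not obviously initial in the covering sieve it generates, and \v{C}ech descent for a finite affine cover produces quasi-affine, not affine, terms. Sheaf-theoretically, to know that the limit over a sub-poset of opens computes global sections you need that sub-poset to be codirected over each element of the sieve, which is what closure under intersections buys. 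This is precisely why the paper's proof of \autoref{thm:geom} runs the limit over the quasicompact \emph{quasi-affine} opens instead: in a qcqs scheme these form a basis closed under finite intersections, and the key extra input is that for such $U$ the canonical map $\Perf_{\Gamma(U,\cc{O}_{X})}\to\Perf_{U}$ is still an equivalence, so each local piece remains unigenic and your affine-case argument applies verbatim. With that substitution (and with the compatibility of $f$ with restriction pinned down as in the paper, via the commuting squares of Karoubi quotients supplied by \autoref{thm:zariskiisbalmer} rather than an appeal to unspecified naturality), your proof matches the paper's.
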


\autoref{thmalph:geometrization} is a special case of a slightly stronger statement, which is collected in \autoref{thm:slightlybettergeom}. However, just this case is already sufficient for a broad swath of applications:

\begin{example*}
  Let $G$ be a finite group and $k$ a field of characteristic dividing the order of $G$. The \emph{stable module category} of $kG$, denoted $\mm{St}_{kG}$, is a $2$-ring which exactly captures the failure of the representation theory of $G$ over $k$ to be semisimple (see \autoref{ex:stablemodule} for a precise definition). In the linked example, we show that $(\Spec \mm{St}_{kG}, \cc{R_{O}})$ is itself a spectral scheme whose underlying classical scheme is identified with $\mm{Proj}~H^{\ast}(G, k)$. We refer to this spectral scheme as the \emph{spectral support variety}, and denote it by $\cc{V}_{G}$. \autoref{thmalph:geometrization} now supplies a fully faithful embedding
  \[
    \Perf_{\cc{V}_{G}} \hookrightarrow \mm{St}_{kG}
  \]
  which is moreover functorial in the group $G$.
\end{example*}

Such a map is constructed in \cite[\S 9]{Mathew2016} by hand for the case of $G$ elementary abelian, and in fact Mathew's observation in \emph{loc.\ cit.} formed the genesis for this paper's investigation. In \cite{mathew2015torusactionsstablemodule}, Mathew utilizes this embedding to provide a new proof of Dade's theorem via spectral algebraic geometry. A shadow of the same is recovered by work of \cite{balmerPicardGroupsTriangular2010}, which constructs an injective homomorphism from the Picard group of lines bundles on the variety $\mm{Proj}~H^{\ast}(G, k)$ to the group of invertible objects of $\mm{St}_{kG}$, after inverting the characteristic of the base field, and also by hand. The embedding $\Perf_{\cc{V}_{G}} \hookrightarrow \mm{St}_{kG}$ unifies these two observations, and moreover supplies a functoriality result for Balmer's homomorphism (which, per the author's knowledge, was hitherto unclear). We will expand upon these considerations in \cite{TFEndo}, where we also explore applications to computing the group of \emph{torsion-free endotrivial modules} of $kG$. 

Finally, we note that all of the results above work in the Dirac spectral setting. We have chosen this level of generality owing to recent work of Balmer-Gallauer \cite{balmerGeometryPermutationModules2025} which shows that the Zariski spectrum of the \emph{derived category of permutation modules} of $G$ over $k$ is a Dirac spectral scheme for $G$ an elementary abelian group. We expound upon this example in \autoref{ex:permmods}.

\subsection{Overview} \hyperref[sec:recollections]{Section~\ref*{sec:recollections}} contains the necessary recollections on geometries following \cite{lurieDerivedAlgebraicGeometryV}, in addition to results on the Zariski geometry of $2$-rings following \cite{aokiHigherGeometriesTensor}. In \hyperref[sec:relativeschemes]{Section~\ref*{sec:relativeschemes}} we collect some preliminaries on relative spectra from \cite{lurieDerivedAlgebraicGeometryV} which will enable their computation via descent. Our key result here is \autoref{prop:relativespecislocalic}, which demonstrates that the relative spectra of schemes are $0$-localic. Our first main theorem, \autoref{thmalph:affineness}, is proved in \hyperref[sec:affineness]{Section~\ref*{sec:affineness}}. \hyperref[sec:recon]{Section~\ref*{sec:recon}} is dedicated to the proofs of \autoref{thmalph:qcqsschemesareaffine} and \autoref{thmalpha:comparison}. Finally, \autoref{coralph:univprop} and \autoref{thmalph:geometrization} are recorded in \hyperref[sec:geometrization]{Section~\ref*{sec:geometrization}}.

\subsection{Conventions} We follow the notational conventions of \cite{aokiHigherGeometriesTensor}. \tdef{This color modifier} is used to mark instances of important definitions or notations for the reader's convenience. When writing an adjunction between $\infty$-categories as \[L : \cc{C} \rightleftarrows \cc{D} : R,\] it is always understood that $L$ is left adjoint to $R$, in symbols $L \dashv R$, unless otherwise specified. 

\subsection{Acknowledgements}

We thank Martin Gallauer for helpful comments on the material of \autoref{ex:permmods}. We are immensely grateful to Lucas Piessevaux and Juan Omar G\'omez for careful readings of our first draft. We thank David Gepner for constant encouragement and ever-present interest in our projects. Finally, we thank Jennifer Cantrell for help with selecting the cover image. Some of this material is based on work conducted while in residence at the Hausdorff Research Institute for Mathematics during the Trimester Program: ``Spectral Methods in Algebra, Geometry, and Topology'' from September--December 2022, funded by the Deutsche Forschungsgemeinschaft under Germany’s Excellence Strategy – EXC-2047/1 – 390685813. 

\section{Recollections on the Geometry of Rings and 2-Rings}\label{sec:recollections}

\subsection{Geometries}\label{ssec:geometries}

In this subsection we recall the classical Zariski and Dirac geometries on $\CAlg$ as constructed in \cite{lurieDerivedAlgebraicGeometry2011a} and \cite[\S 3]{aokiHigherGeometriesTensor}. We then go on to recall the construction of the absolute spectrum and the relative spectrum associated to a morphism of geometries. 

\begin{definition}\label{def:geometry}
    Let $\cc{G}$ be a small $\infty$-category, $\cc{G}^{\mm{ad}}$ a wide subcategory of~$\cc{G}$, and $\tau$ a Grothendieck topology on~$\cc{G}$. We say that $(\cc{G}^{\mm{ad}},\tau)$ is an \tdef{admissibility structure} on~$\cc{G}$ or that $(\cc{G},\cc{G}^{\mm{ad}},\tau)$ is a \tdef{geometry} if the conditions below are satisfied.
    \begin{enumerate}
        \item $\cc{G}$ has finite limits and is idempotent complete.
        \item $\tau$ is generated by morphisms in $\cc{G}^{\mm{ad}}$.
        \item $\cc{G}^{\mm{ad}}$ is closed under base changes in~$\cc{G}$.
        \item If $f$ is a retract of~$g$ in $\cc{G}^{[1]}$ satisfying $g\in\cc{G}^{\mm{ad}}$, then $f\in\cc{G}^{\mm{ad}}$ as well.
    \end{enumerate}
    We refer to morphisms in~$\cc{G}^{\mm{ad}}$ and covers in~$\tau$ as admissible morphisms and admissible covers, respectively.
\end{definition}

      \begin{definition}
    The \tdef{classical Zariski geometry} on (nonconnective) $\bb{E}_{\infty}$ ring spectra consists of the following data:
        \begin{enumerate}
            \item $\mdef{\GZarsp} =\CAlg^{\omega,\op}$, the opposite of the $\infty$-category of compact $\bb{E}_{\infty}$ rings.
            \item Admissible morphisms correspond to localization maps $R \to R[x^{-1}]$ for $x \in \pi_{0}R$
            \item  A finite collection $\{R\to R[x_i^{-1}]\}_{i \in I}$ generates a covering sieve if the set $\{x_i\}_{i \in I} \subset \pi_{0}R$ generates the unit ideal. 
        \end{enumerate}
      \end{definition}

      \begin{remark}
        There is a variant of the above definition, $\mdef{\GZarspcn}$, which is restriction of the admissibility structure above to the $\infty$-category $\CAlg^{\mm{cn}}$ of connective $\bb{E}_{\infty}$ ring spectra. Note that the inclusion $\CAlg^{\mm{cn}} \hookrightarrow \CAlg$ restricts to compact objects, and as such induces a morphism of geometries $\GZarspcn \to \GZarsp$. We will not consider this variant here.
      \end{remark}

            \begin{definition}
    The \tdef{Dirac geometry} on $\bb{E}_{\infty}$-ring spectra consists of the following data:
        \begin{enumerate}
            \item $\mdef{\GDirsp} =\CAlg^{\omega,\op}$, the opposite of the $\infty$-category of compact $\bb{E}_{\infty}$ rings.
            \item Admissible morphisms correspond to localization maps $R \to R[x^{-1}]$ for homogenous elements $x \in \pi_{\ast}R$
            \item   A finite collection $\{R\to R[x_i^{-1}]\}_{i \in I}$ generates a covering sieve if the set $\{x_i\}_{i \in I} \subset \pi_{\ast}R$ generates the unit ideal. 
        \end{enumerate}
      \end{definition}

      Recall that any geometry $\cc{G}$ has an associated $\infty$-category of $\cc{G}$-structured $\infty$-topoi.

      \begin{definition}
        Given a geometry $\cc{G}$, we write $\mdef{\LTop(\cc{G})}$ to denote the $\infty$-category of \tdef{$\cc{G}$-structured $\infty$-topoi}. This is the $\infty$-category consisting of pairs $(\cc{X}, \cc{O})$ where $\cc{X} \in \LTop$ is an $\infty$-topos and $\cc{O} \in \Shv(\cc{X}; \Ind(\cc{G}^{\op}))$ is an $\Ind(\cc{G}^{\op})$-valued sheaf satisfying a particular locality condition with respect to the admissibility structure on $\cc{G}$. A morphism $(\cc{X}, \cc{O_{X}}) \to (\cc{Y}, \cc{O_{Y}})$ is a pair
        \[ f^{\ast}: \cc{X} \to \cc{Y} \in \LTop^{[1]},\ \phi: f^{\ast}\cc{O_{X}} \to \cc{O_{Y}} \in \Shv(\cc{Y}; \Ind(\cc{G}^{\op}) \]
        where $f^{\ast}$ is used both to refer to left-exact left adjoint in $\LTop^{[1]}$ and its induced pullback functor on $\Ind(\cc{G}^{\op})$-valued sheaves, and $\phi$ is itself required to satisfy a particular locality condition with respect to the admissibility structure on $\cc{G}$.
      \end{definition}
      
      We have opted to omit specifics in the above, and refer to \cite[\S 1.2]{lurieDerivedAlgebraicGeometryV} or \cite[\S 3]{aokiHigherGeometriesTensor} for a full recollection of these definitions. Rather than do this ourselves, let us quickly recall the behaviour of the $\infty$-categories of structured topoi obtained from the geometries above.
      
      \begin{example} The $\infty$-category $\mdef{\mm{LTop}(\GZarsp)}$ of consists of pairs $(\cc{X}, \cc{O}_{\cc{X}})$ where $\cc{X} \in \mm{LTop}$ and $\cc{O}_{\cc{X}} \in \Shv(\cc{X};\CAlg)$ a sheaf of rings which is \emph{local} in the following senses:
          \begin{enumerate}
          \item The sheaf $\cc{O}_{\cc{X}}$ is locally nontrivial, i.e., it is not identically the $0$ ring. 
          \item Let $\cc{O}_{\cc{X}}^{\times}$ denote the sheaf of units of $\cc{O}_{\cc{X}}$, and let $e: \cc{O}_{\cc{X}}^{\times}\to \cc{O}_{\cc{X}}$ denote the canonical inclusion. Then $e \coprod (1-e): \cc{O}_{\cc{X}}^{\times} \coprod \cc{O}_{\cc{X}}^{\times} \to \cc{O}_{\cc{X}}$ is an effective epimorphism.
          \end{enumerate}
          A morphism $(\cc{X}, \cc{O}_{\cc{X}}) \to (\cc{Y}, \cc{O}_{\cc{Y}})$ consists of a left-exact left adjoint $f^{\ast}:\cc{X} \to \cc{Y}$ along with a morphism $f^{\ast}\cc{O}_{\cc{X}} \to \cc{O}_{\cc{Y}} \in \Shv(\cc{O}_{\cc{Y}};\CAlg)$ such that the following square is Cartesian:
          \[\xymatrix{
              f^{\ast}\cc{O}_{\cc{X}}^{\times} \ar[r] \ar[d] & \cc{O}_{\cc{Y}}^{\times} \ar[d] \\
              f^{\ast}\cc{O}_{\cc{X}} \ar[r] & \cc{O}_{\cc{Y}}. 
            } \] 
        \end{example}
        
        \begin{remark}\label{rem:localityonstalks} Given a point $x^{\ast}: \cc{X} \to \An$, it is easy to check that the stalk $\cc{O}_{\cc{X},x} \in \CAlg$ is a \tdef{local $\bb{E}_{\infty}$-ring} in the sense that it is a local ring on $\pi_{0}$, and that any morphism in $\LTop(\GZarsp)$ induces a $\pi_{0}$-local morphism of $\bb{E}_{\infty}$ rings on stalks.
        \end{remark}
        
        \begin{example} The $\infty$-category $\mdef{\mm{LTop}(\GDirsp)}$ consists of pairs $(\cc{X}, \cc{O}_{\cc{X}})$ where $\cc{X} \in \mm{LTop}$ and $\cc{O}_{\cc{X}} \in \Shv(\cc{X};\CAlg)$ a sheaf of rings satisfying the conditions of \cite[Definition 1.2.8]{lurieDerivedAlgebraicGeometryV} for the geometry $\GDirsp$. A morphism $(\cc{X}, \cc{O}_{\cc{X}}) \to (\cc{Y}, \cc{O}_{\cc{Y}})$ consists of a left-exact left adjoint $f^{\ast}:\cc{X} \to \cc{Y}$ along with a morphism $f^{\ast}\cc{O}_{\cc{X}} \to \cc{O}_{\cc{Y}} \in \Shv_{\CAlg}(\cc{O}_{\cc{Y}})$ which is a local transformation of $\GDirsp$-structures in the sense of \emph{loc. cit.}
        \end{example}
        
        \begin{remark}
          Given any point $x^{\ast}: \cc{X} \to \An$ the stalk $\cc{O}_{\cc{X},x} \in \CAlg$ is a \tdef{Dirac-local $\bb{E}_{\infty}$-ring} in the sense that it is a Dirac-local ring on $\pi_{\ast}$, and any morphism in $\LTop(\GDirsp)$ induces a $\pi_{\ast}$ Dirac-local morphism of $\bb{E}_{\infty}$ rings on stalks.
        \end{remark}

Recall that a transformation of geometries $\cc{G} \to \cc{G}'$ is, informally, a left-exact functor which respects admissible morphisms and admissible covers. The following functor is constructed in \cite[\S 2.1]{lurieDerivedAlgebraicGeometryV}.

\begin{definition}\label{def:relativespectrum} Given a transformation of geometries $\cc{G} \to \cc{G}'$, we write $\mdef{\Spec^{\cc{G}'}_{\cc{G}}}: \mm{LTop}(\cc{G}) \to \mm{LTop}(\cc{G}')$ to denote the left adjoint to the natural forgetful functor $\mm{LTop}(\cc{G}') \to \mm{LTop}(\cc{G})$. We will call this the \tdef{relative spectrum} functor.
\end{definition}

Given a geometry $\cc{G}$, one always has access to $\mdef{\cc{G}^{\mm{disc}}}$ the \tdef{discrete geometry} on $\cc{G}$, given by the geometry with only equivalences as admissible covers. There is always a transformation of geometries $\cc{G}^{\mm{disc}}\to \cc{G}$, which informally gives rise to a sequence of functors \[\mm{LTop}(\cc{G}) \xrightarrow{\mm{fgt}} \mm{LTop}(\cc{G}^{\mm{disc}}) \xrightarrow{\Gamma(-, \cc{O})}  \Ind(\cc{G}^{\op}).\]

The latter functor $\Gamma(-, \cc{O}) : \mm{LTop}(\cc{G}^{\mm{disc}}) \to  \Ind(\cc{G}^{\op})$ admits a left adjoint, sending $R \in \Ind(\cc{G})$ to the pair $(\An, \underline{R}) \in \mm{LTop}(\cc{G}^{\mm{disc}})$.

\begin{definition}
  We write $\mdef{\Spec^{\cc{G}}}:\Ind(\cc{G}) \to \mm{LTop}(\cc{G})$ to denote the composite \[\Ind(\cc{G}^{\op})\xrightarrow{(\An, \underline{R})} \mm{LTop}(\cc{G}^{\mm{disc}}) \xrightarrow{\Spec^{\cc{G}}_{\cc{G}^{\mm{disc}}}} \mm{LTop}(\cc{G})\] which we refer to as the \tdef{absolute spectrum} functor.
\end{definition}

The following follows from the existence of the adjoints above.

\begin{theorem}\label{thm:absolutespectra}
  One has an adjunction \[\Spec^{\cc{G}} : \Ind(\cc{G}^{\op})  \rightleftarrows  \mm{LTop}(\cc{G}): \Gamma_{\cc{G}} \] where $\Gamma_{\cc{G}}:= \Gamma(-, \cc{O})$. Namely, there is a natural equivalence \[\map_{\mm{LTop}(\cc{G})}(\Spec^{\cc{G}}(-), (\cc{X}, \cc{O}_{\cc{X}})) \simeq \map_{\cc{G}}(-, \Gamma(\cc{X}, \cc{O}_{\cc{X}}))\] of functors from $\Ind(\cc{G}^{\op})$ to $\widehat{\An}$.
\end{theorem}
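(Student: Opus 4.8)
The plan is to realize $\Spec^{\cc{G}}$ as a composite of two left adjoints and then take the composite of the two corresponding right adjoints. By construction $\Spec^{\cc{G}}$ is the composite $\mm{LTop}(\cc{G}^{\mm{disc}}) \xrightarrow{\Spec^{\cc{G}}_{\cc{G}^{\mm{disc}}}} \mm{LTop}(\cc{G})$ precomposed with the functor $R \mapsto (\An, \underline{R})$. The first factor $\Spec^{\cc{G}}_{\cc{G}^{\mm{disc}}}$ is, by \autoref{def:relativespectrum} applied to the transformation of geometries $\cc{G}^{\mm{disc}} \to \cc{G}$, the left adjoint to the forgetful functor $\mm{fgt}: \mm{LTop}(\cc{G}) \to \mm{LTop}(\cc{G}^{\mm{disc}})$; so that input is already available. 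It therefore remains only to check that $R \mapsto (\An, \underline{R})$ is left adjoint to $\Gamma(-, \cc{O}): \mm{LTop}(\cc{G}^{\mm{disc}}) \to \Ind(\cc{G}^{\op})$, and then to assemble.

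For the second adjunction, fix $R \in \Ind(\cc{G}^{\op})$ and $(\cc{Y}, \cc{O}_{\cc{Y}}) \in \mm{LTop}(\cc{G}^{\mm{disc}})$. Because the discrete geometry has only equivalences as admissible covers, the locality conditions on objects and on morphisms are vacuous, so a morphism $(\cc{Y}, \cc{O}_{\cc{Y}}) \to (\An, \underline{R})$ is precisely a pair consisting of a geometric morphism $f: \cc{Y} \to \An$ together with a map $f^{\ast}\underline{R} \to \cc{O}_{\cc{Y}}$ in $\Shv(\cc{Y}; \Ind(\cc{G}^{\op}))$. Since $\An = \cc{S}$ is the terminal $\infty$-topos, the space of such $f$ is contractible, with $f^{\ast}$ the constant-sheaf functor and $f_{\ast} \simeq \Gamma(\cc{Y}, -)$; and under $\Shv(\An; \Ind(\cc{G}^{\op})) \simeq \Ind(\cc{G}^{\op})$ the object $\underline{R}$ is just $R$. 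Hence the $(f^{\ast} \dashv f_{\ast})$-adjunction gives $\map(f^{\ast}\underline{R}, \cc{O}_{\cc{Y}}) \simeq \map_{\Ind(\cc{G}^{\op})}(R, \Gamma(\cc{Y}, \cc{O}_{\cc{Y}}))$, and assembling these over the contractible space of $f$ yields the natural equivalence $\map_{\mm{LTop}(\cc{G}^{\mm{disc}})}((\cc{Y},\cc{O}_{\cc{Y}}), (\An, \underline{R})) \simeq \map_{\Ind(\cc{G}^{\op})}(R, \Gamma(\cc{Y}, \cc{O}_{\cc{Y}}))$, i.e. the claimed adjunction, naturally in both variables.

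Composing the two left adjoints, $\Spec^{\cc{G}}$ is left adjoint to $\Gamma(-, \cc{O}) \circ \mm{fgt}$. Now $\mm{fgt}$ only discards the $\cc{G}$-locality datum and leaves the underlying topos and $\Ind(\cc{G}^{\op})$-valued structure sheaf untouched, so $\Gamma(-, \cc{O}) \circ \mm{fgt}$ is exactly the functor $\Gamma_{\cc{G}}: (\cc{X}, \cc{O}_{\cc{X}}) \mapsto \Gamma(\cc{X}, \cc{O}_{\cc{X}})$ appearing in the statement. Unwinding the pasting of the two units and counits then produces the asserted natural equivalence $\map_{\mm{LTop}(\cc{G})}(\Spec^{\cc{G}}(-), (\cc{X}, \cc{O}_{\cc{X}})) \simeq \map_{\Ind(\cc{G}^{\op})}(-, \Gamma(\cc{X}, \cc{O}_{\cc{X}}))$; the target of these functors is $\widehat{\An}$ rather than $\An$ because $\mm{LTop}(\cc{G})$ is large, so its mapping spaces are a priori only large even though $\Gamma_{\cc{G}}$ lands in the small $\infty$-category $\Ind(\cc{G}^{\op})$.

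I do not expect a genuine obstacle: all the substance is packaged in \autoref{def:relativespectrum} (equivalently \cite[\S 2.1]{lurieDerivedAlgebraicGeometryV}), which supplies the first adjoint, and the only thing verified by hand — that $R \mapsto (\An, \underline{R})$ is left adjoint to global sections over discrete $\cc{G}$-structured topoi — is immediate from the universal property of the terminal topos together with the constant-sheaf/global-sections adjunction. The one point that repays care is size bookkeeping: one should confirm that composing a functor landing in the large category $\mm{LTop}(\cc{G})$ with the (accessible, limit-preserving) $\Gamma_{\cc{G}}$ still lands in the small $\Ind(\cc{G}^{\op})$, so that the displayed equivalence is correctly stated as one of $\widehat{\An}$-valued functors.
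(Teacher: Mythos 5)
Your proposal is correct and matches the paper's (essentially omitted) argument: the paper simply observes that $\Spec^{\cc{G}}$ is by construction a composite of the two left adjoints $R \mapsto (\An,\underline{R})$ and $\Spec^{\cc{G}}_{\cc{G}^{\mm{disc}}}$, so the adjunction follows by composing, exactly as you do. Your explicit verification of the discrete-geometry adjunction via the terminal topos and the constant-sheaf/global-sections adjunction is the content the paper leaves implicit; just note that you have phrased the mapping space in the geometric ($\RTop$-style) direction, which is the opposite of the paper's $\LTop(\cc{G})$ convention but describes the same space, and that $\Ind(\cc{G}^{\op})$ is large (though locally small) rather than small.
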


\begin{example}
  For $R \in \CAlg$, one has an identification $\Spec^{\GZarsp} \simeq (\Shv(\Spec\pi_{0}R), \cc{O}_{\Spec R})$ of locally spectrally ringed topoi. In particular, the identification $\Gamma(\Spec\pi_{0}R, \cc{O}_{\Spec R}) \simeq R$ implies that the absolute spectrum functor is fully faithful. This latter statement implies the subcanonicity of the Zariski topology on $\CAlg$.
\end{example}

\subsection{2-rings} Recall the tensor product on $\Cat^{\mm{perf}}$ of \cite[\S 4.1.2]{ben-zviIntegralTransformsDrinfeld2010a}, informally characterized by the fully faithful inclusion $\Fun^{\mm{ex}}(\cc{C}_1\otimes\dots\otimes\cc{C}_n\to\cc{C}') \hookrightarrow \Fun(\cc{C}_{1} \times \dots \times \cc{C}_{n}, \cc{C}')$ with essential image given by functors which are exact in each variable.

\begin{definition}\label{def:2ring}
    We let $\mdef{\twoCAlg}$ denote the underlying $\infty$-category associated to $\CAlg(\Cat^{\mm{perf}})$. The objects of $\twoCAlg$ are referred to as \tdef{$2$-rings}. 
\end{definition}

Concretely, a $2$-ring is a small symmetric monoidal idempotent-complete stable $\infty$-category $\cc{K} = (\cc{K},\otimes,\unit)$ such that $\otimes$ is exact in both variables, and a morphism between $2$-rings $\cc{K}$ and~$\cc{L}$ is a symmetric monoidal exact functor $f\colon\cc{K}\to\cc{L}$. The homotopy category of a 2-ring naturally has the structure of a small idempotent-complete tt-category. 

\begin{definition}\label{def:rigid}
  The full subcategory $\mdef{\twoCAlg_{\rig}} \subseteq \twoCAlg$ of \tdef{rigid $2$-rings} comprises of those $2$-rings $\cc{K}$ such that any object $x \in \cc{K}$ admits a dual $x^{\vee}$. 
\end{definition}

\begin{example}\label{ex:rigidity}
    Given an $\bb{E}_{\infty}$ ring spectrum $R$, the $2$-ring of perfect complexes $\Perf_{R}$ is a rigid $2$-ring. More generally, the subcategory $\twoCAlg_{\rig} \subseteq \twoCAlg$ is coreflective and hence closed under limits, see \cite[Proposition 2.37]{aokiHigherGeometriesTensor}.
\end{example}

\begin{definition}
  Given a $2$-ring $\cc{K}$, a \tdef{thick tensor ideal} or \tdef{tt-ideal} of $\cc{K}$ is a stable subcategory $\cc{I} \subseteq \cc{K}$ which is closed under retracts, and is moreover closed under tensoring with any object of $\cc{K}$. Given a subset $\mathscr{G} \subseteq \cc{K}$, we write $\mdef{\langle \mathscr{G}\rangle}$ to denote the smallest thick tt-ideal containing $\mathscr{G}$.
\end{definition}

\begin{definition}
  Given a $2$-ring $\cc{K}$ and a thick tensor-ideal $\cc{I} \subseteq \cc{K}$, the \tdef{Karoubi quotient} of $\cc{K}$ is the initial object of the full subcategory of $\twoCAlg_{\cc{K}/}$ consisting of the symmetric monoidal functors which send every object of $\cc{I}$ to $0$. We denote this object by $\mdef{\cc{K}/\cc{I}}$.
\end{definition}

\begin{remark}
  Karoubi quotients always exist for any tt-ideal, and are the idempotent-complete incarnations of Verdier localizations. Outside of the monoidal setting, these are treated in \cite[Appendix A.3]{calmesHermitianKtheoryStable2025}. For a quick overview of the basic theory in $\twoCAlg$, we refer the reader to \cite[\S 2]{aokiHigherGeometriesTensor}. 
\end{remark}

\subsection{Prerequisite results on the Zariski geometry of 2-rings}\label{ssec:zariskigeomprereq} We will need to utilise the following facts, all of which are imported from \cite[\S 3-4]{aokiHigherGeometriesTensor}.

\begin{definition}[{\cite[Theorem A]{aokiHigherGeometriesTensor}}]
    The following data defines a geometry, known as the \tdef{Zariski geometry}, on commutative 2-rings.
        \begin{enumerate}
            \item $\mdef{\GZarst}:=\twoCAlg^{\omega,\op}$ is the opposite of the $\infty$-category of compact $2$-rings.
            \item A morphism $\cc{K}\to\cc{K}'$ in $(\GZarst)^{\op}$ is called admissible if it corresponds to a Karoubi quotient $\cc{K} \to \cc{K}/\cc{I}$. 
            \item A finite collection of admissible morphisms $\{f_i\colon\cc{K}\to\cc{K}_{i}\}_{i \in I}$ is declared to generate a covering sieve if for every $x \in \bigcap_{i\in I}\ker f_{i}$, there exists an $n$ so that $x^{\otimes n} = 0$.
            \end{enumerate}
\end{definition}

\begin{example}
  The $\infty$-category $\mdef{\mm{LTop}(\GZarst)}$ of consists of pairs $(\cc{X}, \cc{O}_{\cc{X}})$ where $\cc{X} \in \mm{LTop}$ and $\cc{O}_{\cc{X}} \in \Shv(\cc{X};\twoCAlg)$ is a sheaf of 2-rings satisfying the conditions of \cite[Definition 1.2.8]{lurieDerivedAlgebraicGeometryV} for the geometry $\GZarst$. A morphism $(\cc{X}, \cc{O}_{\cc{X}}) \to (\cc{Y}, \cc{O}_{\cc{Y}})$ is as above.
\end{example}

 \begin{remark}
  Given any point $x: \cc{X} \to \An$ the stalk $\cc{O}_{\cc{X},x} \in \twoCAlg$ is a \tdef{local 2-ring} in the sense that if $x \otimes y \in \cc{O}_{\cc{X},x}$ is tensor-nilpotent, then either $x$ or $y$ is tensor-nilpotent. Any morphism $(\cc{X}, \cc{O}_{\cc{Y}}) \to (\cc{Y}, \cc{O}_{\cc{Y}})$ in $\LTop(\GZarst)$ induces nil-conservative morphisms on stalks, namely morphisms with kernels consisting of tensor-nilpotent elements.
 \end{remark}

 As suggested by the terminology, one may compare the classical Zariski geometry of $\bb{E}_{\infty}$ ring spectra and the Zariski geometry of $2$-rings.

\begin{proposition}[{\cite[Proposition 4.33]{aokiHigherGeometriesTensor}}]
    The assignment $A\mapsto\Perf_{A}$ yields morphisms of geometries $\GZarsp \to \GZarst$ and $\GDirsp \to\GZarst$.
\end{proposition}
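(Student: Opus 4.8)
The plan is to verify directly that the functor underlying $\Perf_{(-)}$ meets the three requirements of a transformation of geometries: that it is left exact, that it carries admissible morphisms to admissible morphisms, and that it carries admissible covering families to admissible covering families. I will carry this out for the classical Zariski case $\GZarsp \to \GZarst$; the Dirac case $\GDirsp \to \GZarst$ is formally identical, with $\pi_{0}$ replaced everywhere by $\pi_{\ast}$ and all ring elements taken to be homogeneous.

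\emph{The functor and left-exactness.} The assignment $A \mapsto \Perf_{A}$ is a functor $\CAlg \to \twoCAlg$ (functorial via base change of perfect modules), and one first checks that it carries compact $\bb{E}_{\infty}$-rings to compact $2$-rings, so that it restricts to a functor $\CAlg^{\omega} \to \twoCAlg^{\omega}$; its opposite is then the candidate transformation $\GZarsp = \CAlg^{\omega,\op} \to \twoCAlg^{\omega,\op} = \GZarst$. Left-exactness of this opposite functor says precisely that $\Perf_{(-)} \colon \CAlg^{\omega} \to \twoCAlg^{\omega}$ preserves finite colimits, and this reduces to two inputs: the identification $\Perf_{\bb{S}} \simeq \Sp^{\omega}$ of perfect complexes on the sphere with the initial $2$-ring, and the K\"unneth/base-change equivalence $\Perf_{A \otimes_{C} B} \simeq \Perf_{A} \otimes_{\Perf_{C}} \Perf_{B}$, which exhibits a pushout of compact $\bb{E}_{\infty}$-rings as a pushout in $\twoCAlg$.

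\emph{Admissible morphisms.} In $\GZarst$ the admissible morphisms are precisely the Karoubi quotients, so it is enough to show that $\Perf_{(-)}$ sends a localization $A \to A[x^{-1}]$, with $x \in \pi_{0}A$, to a Karoubi quotient; the functor in question is the base-change functor $\Perf_{A} \to \Perf_{A[x^{-1}]}$. Let $\cc{I} = \langle \cofib(x\colon A \to A)\rangle \subseteq \Perf_{A}$ be the thick tensor-ideal generated by the Koszul object $A/x$. A Thomason-type localization argument identifies $\cc{I}$ with the tt-ideal $\{M \in \Perf_{A} : M[x^{-1}] \simeq 0\}$ and exhibits $\Perf_{A} \to \Perf_{A[x^{-1}]}$ as a Verdier localization with kernel $\cc{I}$ whose essential image is dense; since $\Perf_{A[x^{-1}]}$ is idempotent complete and is the thick subcategory generated by the image of the unit, it realizes the Karoubi quotient $\Perf_{A}/\cc{I}$, which is exactly the assertion that $\Perf_{A} \to \Perf_{A[x^{-1}]}$ is admissible.

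\emph{Admissible covers and the obstacle.} A finite family generating a covering sieve in $\GZarsp$ has the form $\{A \to A[x_{i}^{-1}]\}_{i \in I}$ with $\{x_{i}\}_{i \in I}$ generating the unit ideal of $\pi_{0}A$; its image is $\{\Perf_{A} \to \Perf_{A[x_{i}^{-1}]}\}_{i \in I}$. By the covering criterion in the Zariski geometry on $2$-rings, this is a covering family once every object of $\bigcap_{i} \ker(\Perf_{A} \to \Perf_{A[x_{i}^{-1}]})$ is tensor-nilpotent. But that kernel equals $\{M \in \Perf_{A} : M[x_{i}^{-1}] \simeq 0 \text{ for all } i\}$, and for such $M$ the homotopy $\pi_{\ast}M$ is annihilated by a power of each $x_{i}$; writing $1 = \sum_{i} a_{i}x_{i}$ and raising to a sufficiently large power forces $\pi_{\ast}M = 0$, hence $M \simeq 0$. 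So the intersection of kernels is the zero ideal and the covering condition is satisfied vacuously. The same three steps go through verbatim in the Dirac setting. I expect the genuine content to lie in the second step: identifying $\Perf_{A[x^{-1}]}$ with the symmetric-monoidal Karoubi quotient $\Perf_{A}/\langle A/x\rangle$ bundles together the Thomason localization theorem, the behaviour of idempotent completions, and the universal property of Karoubi quotients in $\twoCAlg$, whereas left-exactness and the cover condition are comparatively soft.
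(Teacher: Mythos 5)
The paper does not prove this proposition; it imports it by citation from \cite{aokiHigherGeometriesTensor}, so there is no internal argument to compare against. Your direct verification is correct and is exactly what the cited proof amounts to: left-exactness of $\GZarsp \to \GZarst$ via $\Perf_{\bb{S}} \simeq \Sp^{\omega}$ and the K\"unneth equivalence $\Perf_{A\otimes_{C}B} \simeq \Perf_{A}\otimes_{\Perf_{C}}\Perf_{B}$ (plus preservation of compactness, which follows since $\cc{R}_{(-)}$ preserves filtered colimits); admissibility via the Thomason--Neeman identification of $\Perf_{A[x^{-1}]}$ with the symmetric monoidal Karoubi quotient $\Perf_{A}/\langle \cofib(x)\rangle$; and the cover condition via the elementwise observation that a perfect module killed by all the localizations $A[x_{i}^{-1}]$ has $x_{i}$-power-torsion homotopy for each $i$ and hence vanishes when the $x_{i}$ generate the unit ideal. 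You also correctly isolate the only step with real content (the Thomason--Neeman step, promoted to $\twoCAlg$ via the universal property of Karoubi quotients by tensor ideals); the Dirac case is indeed verbatim with homogeneous elements of $\pi_{\ast}$.
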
 

From the proposition above, one obtains a series of morphisms $\GZarsp \subseteq \GDirsp \to \GZarst$. The associated restriction functors from $\LTop(\GZarst)$ to $\LTop(\GZarsp)$ (or $\LTop(\GDirsp)$) can be identified through the helpful proposition below.

\begin{definition}\label{def:endfunctor}
     We write $\mdef{\cc{R}_{(-)}}$ to indicate the functor sending $\cc{K} \in \twoCAlg$ to the endomorphism ring spectrum $\hom_{\cc{K}}(\unit, \unit) \in \CAlg$. This functor participates in an adjunction of the form
  \[
    \Perf: \CAlg \rightleftarrows \twoCAlg: \cc{R_{(-)}}
  \]
  where the left adjoint is fully faithful, see for example \cite[Construction 4.29]{aokiHigherGeometriesTensor}.  
\end{definition}

\begin{proposition}[{\cite[Lemma 3.22]{aokiHigherGeometriesTensor}}]
  Given a $\GZarst$-structured $\infty$-topos $(\cc{X}, \cc{O}) \in \LTop(\GZarst)$, its associated restriction to $\LTop(\GZarsp)$ $($or $\LTop(\GDirsp))$ may be identified with the pair $(\cc{X}, \cc{R_{\cc{O}}})$ where $\cc{R_{\cc{O}}}$ is the composite
  \[
    \cc{R}_{(-)} \circ \cc{O}: \cc{X}^{\op}\to \CAlg
    \]
\end{proposition}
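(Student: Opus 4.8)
The plan is to unwind the definition of the restriction functor $\LTop(\GZarst) \to \LTop(\GZarsp)$ (resp.\ $\LTop(\GZarst) \to \LTop(\GDirsp)$) --- the forgetful functor whose left adjoint is the relative spectrum of \autoref{def:relativespectrum} --- attached to the morphism of geometries $\varphi\colon \GZarsp \to \GZarst$ (resp.\ $\GDirsp \to \GZarst$) of \cite[Proposition 4.33]{aokiHigherGeometriesTensor}, and then to recognize its effect on structure sheaves as postcomposition with the functor $\cc{R}_{(-)}$ of \autoref{def:endfunctor}.

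First I would recall the explicit description of this restriction functor, following \cite[\S 1.2]{lurieDerivedAlgebraicGeometryV} and \cite[\S 3]{aokiHigherGeometriesTensor}: it fixes the underlying $\infty$-topos $\cc{X}$, and --- presenting a $\cc{G}$-structure on $\cc{X}$ as an $\Ind(\cc{G}^{\op})$-valued sheaf $\cc{O}\colon \cc{X}^{\op} \to \Ind(\cc{G}^{\op})$ --- it sends $\cc{O}$ to the composite $\Phi \circ \cc{O}$, where $\Phi\colon \Ind(\GZarst^{\op}) \to \Ind(\GZarsp^{\op})$ is the functor right adjoint to $\Ind(\varphi^{\op})\colon \Ind(\GZarsp^{\op}) \to \Ind(\GZarst^{\op})$. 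In the alternative model, where a $\cc{G}$-structure is a left-exact ``local'' functor out of $\cc{G}$, the restriction functor is simply precomposition with $\varphi$; under the equivalence with $\Ind(\cc{G}^{\op})$-valued sheaves this precomposition translates into postcomposition with $(-)\circ\varphi \colon \Fun^{\lex}(\cc{G}', \An) \to \Fun^{\lex}(\cc{G}, \An)$, which is exactly the right adjoint of $\Ind(\varphi^{\op})$. Making this translation precise, and checking that the relevant $\Ind$-categories are the ones claimed, is the step that requires the most care.

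Next I would identify $\Phi$. Under the defining identifications $\GZarsp^{\op} = \CAlg^{\omega}$ and $\GZarst^{\op} = \twoCAlg^{\omega}$, the functor $\varphi^{\op}$ is the restriction of $\Perf\colon \CAlg \to \twoCAlg$ to compact objects; since $\CAlg$ and $\twoCAlg$ are compactly generated we have $\Ind(\GZarsp^{\op}) \simeq \CAlg$ and $\Ind(\GZarst^{\op}) \simeq \twoCAlg$, and $\Ind(\varphi^{\op})$ becomes $\Perf\colon \CAlg \to \twoCAlg$ itself --- here one uses that $\Perf$ is a left adjoint (\autoref{def:endfunctor}), hence preserves filtered colimits, and that it carries compact $\bb{E}_{\infty}$-rings to compact $2$-rings (part of the content of \cite[Proposition 4.33]{aokiHigherGeometriesTensor}). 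By \autoref{def:endfunctor} the right adjoint of $\Perf$ is precisely $\cc{R}_{(-)}$, so $\Phi \simeq \cc{R}_{(-)}$; hence the restriction functor carries $(\cc{X}, \cc{O})$ to $(\cc{X}, \cc{R}_{(-)}\circ\cc{O}) = (\cc{X}, \cc{R}_{\cc{O}})$, and on a morphism $(f^{\ast}, \phi)$ it returns $(f^{\ast}, \cc{R}_{(-)}\circ\phi)$, using naturality of the pullback $f^{\ast}$ in the coefficient category.

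Finally I would dispatch the routine points: $\cc{R}_{(-)}$ preserves limits (being a right adjoint), so $\cc{R}_{\cc{O}}$ is genuinely a sheaf; and the $\GZarsp$-locality (resp.\ $\GDirsp$-locality) of $\cc{R}_{\cc{O}}$ is automatic, since $\varphi$ is a transformation of geometries and so the restriction functor lands in $\LTop(\GZarsp)$ (resp.\ $\LTop(\GDirsp)$) by construction. The Dirac case needs no separate argument: $\GDirsp$ and $\GZarsp$ have the same underlying $\infty$-category $\CAlg^{\omega,\op}$, and the transformation to $\GZarst$ is again induced by $A \mapsto \Perf_{A}$, so the identification of $\Phi$ with $\cc{R}_{(-)}$ goes through unchanged. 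Granting the bookkeeping flagged above, the statement thus reduces to the adjunction $\Perf \dashv \cc{R}_{(-)}$.
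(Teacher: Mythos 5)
Your argument is correct, and it is the standard unwinding: the paper itself gives no proof of this statement (it is imported verbatim from \cite[Lemma 3.22]{aokiHigherGeometriesTensor}), and your identification of the restriction functor with postcomposition by the right adjoint of $\Ind(\varphi^{\op}) \simeq \Perf$, namely $\cc{R}_{(-)}$, is exactly how the cited source argues. The only ingredients you rely on --- that $\CAlg$ and $\twoCAlg$ are compactly generated, that $\Perf$ preserves compacts and filtered colimits, and the adjunction $\Perf \dashv \cc{R}_{(-)}$ --- are all available from \autoref{def:endfunctor} and \cite[Proposition 4.33]{aokiHigherGeometriesTensor}, so nothing is missing.
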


In \cite{aokiHigherGeometriesTensor} it is shown that the Zariski geometry of a $2$-ring is captured by its \emph{Balmer spectrum}. We recall this notion and the essential features required below. For a more detailed recollection, we refer either to \cite[Section 4.1]{aokiHigherGeometriesTensor} or to Balmer's original paper \cite{balmerSpectrumPrimeIdeals2005}.

\begin{recollection}
  Let $\cc{K}$ be a $2$-ring. The \tdef{Balmer spectrum} of $\cc{K}$ is the topological space $\Spc \cc{K}$ whose underlying set is given by
  \[
    \{\cc{P} \mid \cc{P} \subseteq \cc{K} \text{ is a prime tt-ideal} \} 
  \]
  Where by \tdef{prime tt-ideal} we mean that $\cc{P}\text{ is a tt-ideal and }\forall x,y \in \cc{K},~x \otimes y \in \cc{P} \implies x \in \cc{P}\text{ or }y \in \cc{P}$. The topology of $\Spc \cc{K}$ is generated by a basis of open subsets $U(a) := \{ \cc{P} \in \Spc \cc{K}\mid a \in \cc{P}\}$. It can be shown that the basic open subsets are exactly the quasicompact open subsets, and with this topology the Balmer spectrum is a quasicompact, quasiseparated spectral space.
\end{recollection}

The following theorem is a combination of \cite[Theorem C, Theorem D]{aokiHigherGeometriesTensor}.

\begin{theorem}\label{thm:zariskiisbalmer}
  Let $\cc{K} \in \twoCAlg$.
  \begin{enumerate}
  \item There is a natural identification of underlying topoi $\Spec^{\GZarst}\cc{K} \simeq \Shv(\Spc \cc{K})$, where the $\Spc \cc{K}$ refers to the Balmer spectrum of the tt-category $\ho \cc{K}$.
  \item    If $\cc{K}$ is moreover assumed to be rigid, then the associated structure sheaf on $\Spec \cc{K}$ may be identified with the unique $\twoCAlg$-valued sheaf on $\Spc \cc{K}$ which sends quasicompact opens of the form $U(a) \subseteq \Spc \cc{K},\ a\in \cc{K}$ to the Karoubi quotients $\cc{K}/\langle a \rangle$.
  \end{enumerate}
\end{theorem}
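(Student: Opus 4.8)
The plan is to unwind Lurie's construction of the absolute spectrum $\Spec^{\GZarst}$ from \cite{lurieDerivedAlgebraicGeometryV} and match both its underlying $\infty$-topos and its structure sheaf against the combinatorics of the Balmer spectrum. Recall that for a geometry $\cc{G}$ and $A \in \Ind(\cc{G}^{\op})$, the $\infty$-topos underlying $\Spec^{\cc{G}}A$ is $\Shv$ of the site of admissible morphisms out of $A$ equipped with the topology induced by $\tau$; it is $0$-localic as soon as that site is a poset, and is $\Shv$ of an honest topological space once the resulting locale is spatial. So the first step is to describe this site for a $2$-ring $\cc{K} \in \twoCAlg \simeq \Ind(\GZarst^{\op})$. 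The admissible morphisms out of $\cc{K}$ are the Karoubi quotients $\cc{K} \to \cc{K}/\langle a_1 \oplus \dots \oplus a_n\rangle = \cc{K} \to \cc{K}/\langle a\rangle$ by finitely generated tt-ideals; $\cc{K}$-linear maps between two such are unique when they exist (Karoubi quotients are epimorphisms) and exist exactly when $\langle a\rangle \subseteq \langle b\rangle$, so the site is a poset. Moreover the topology is insensitive to passing to radicals, since $\cc{K}/\cc{I} \to \cc{K}/\sqrt{\cc{I}}$ is admissible with $\otimes$-nilpotent kernel and hence generates a covering sieve of $\cc{K}/\cc{I}$. Thus the site is the poset of radical tt-ideals of $\cc{K}$ generated by a single object, with finite jointly-nil-conservative families declared to be covers.

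Second, I would feed in Balmer's classification of radical tt-ideals by Thomason subsets. The assignment $a \mapsto \mm{supp}(a) = \Spc\cc{K} \setminus U(a)$ identifies the above poset with the frame of quasicompact open subsets of $\Spc\cc{K} = \Spc\ho\cc{K}$ — these are a basis, closed under finite intersection via $U(a) \cap U(b) = U(a\oplus b)$ and under finite union via $U(a) \cup U(b) = U(a\otimes b)$ — and under this identification a finite family $\{\cc{K} \to \cc{K}/\langle a_i\rangle\}$ is a cover precisely when $\bigcup_i U(a_i) = \Spc\cc{K}$, using that $\mm{supp}(a) = \emptyset$ iff $a$ is $\otimes$-nilpotent, that $\mm{supp}$ is unchanged by radicals, and that $\mm{supp}$ carries $\otimes$ to $\cap$. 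Since a spectral space is sober and is recovered from its lattice of quasicompact opens (Hochster/Stone duality), the locale presented by the site is $\mathcal{O}(\Spc\cc{K})$ with space of points $\Spc\cc{K}$, giving $\Spec^{\GZarst}\cc{K} \simeq \Shv(\Spc\cc{K})$ and hence part (1); functoriality in $\cc{K}$ is inherited from that of Lurie's construction and of $\Spc$.

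For part (2), by construction the structure sheaf is the sheafification of the presheaf on the admissibility site sending the open attached to $\cc{K} \to \cc{K}/\langle a\rangle$ to its target $\cc{K}/\langle a\rangle$, with restriction maps given by further Karoubi quotients. A $\twoCAlg$-valued sheaf on $\Spc\cc{K}$ is determined by its values on the basis $\{U(a)\}$ of quasicompact opens, and that basis is closed under finite intersection, so uniqueness of the sheaf with the prescribed values is immediate; it remains to show that for \emph{rigid} $\cc{K}$ the presheaf $U(a) \mapsto \cc{K}/\langle a\rangle$ is already a sheaf on this basis. Concretely this asks that for every finite cover $U(a) = \bigcup_i U(a_i)$ the canonical map exhibits $\cc{K}/\langle a\rangle$ as the totalization of the \v{C}ech diagram built from the quotients $\cc{K}/\langle a\oplus a_{i_0}\oplus\dots\oplus a_{i_p}\rangle$ in $\twoCAlg$ — a Zariski-descent statement for the $2$-ring $\cc{K}/\langle a\rangle$, which is itself rigid because Karoubi quotients preserve duals and dualizability passes to retracts. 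For non-rigid $\cc{K}$ this descent fails and the structure sheaf is a genuine sheafification admitting no such closed-form description, so rigidity is essential precisely here.

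I expect the crux to be the descent step in part (2): the identification of the admissibility site with the frame of quasicompact opens of $\Spc\cc{K}$ rests on Balmer's classification theorem but is otherwise bookkeeping against Lurie's machinery, whereas the sheaf property of $U(a) \mapsto \cc{K}/\langle a\rangle$ for rigid $\cc{K}$ is the categorified analogue of Zariski descent for perfect complexes, and is where rigidity does the real work of letting local pieces glue. A secondary point to pin down en route is that the admissible morphisms with compact target are exactly the quotients by finitely generated tt-ideals, so the site furnishes a basis of quasicompact opens rather than all opens.
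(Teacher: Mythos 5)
A point of order first: the paper does not prove this statement — it is imported wholesale as a combination of Theorems C and D of the cited companion paper, so there is no in-text argument to compare yours against. Judged on its own terms, your reconstruction follows the route one would expect (and, as far as I can tell, the route the source actually takes): identify the admissible site of $\Spec^{\GZarst}\cc{K}$ with the poset of singly generated tt-ideals, using that Karoubi quotients are epimorphisms in $\twoCAlg$ and that admissibles in $\Pro(\GZarst)$ are base changes of quotients by a single object; pass to radicals; invoke Balmer's classification to match this poset with the lattice of quasicompact opens of $\Spc \ho\cc{K}$; and conclude (1) by sobriety of spectral spaces. For (2), realizing the structure sheaf as the sheafification of the tautological presheaf $U(a)\mapsto \cc{K}/\langle a\rangle$ and reducing the claim to descent for this presheaf is exactly right, and you correctly locate rigidity as the hypothesis that makes the presheaf a sheaf.

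The one place where you owe an argument rather than bookkeeping is the place you flag but do not execute: the \v{C}ech descent for rigid $\cc{K}$. After induction on the size of the cover this is the Mayer--Vietoris statement that for $a\otimes b$ tensor-nilpotent the square of Karoubi quotients $\cc{K}\to\cc{K}/\langle a\rangle,\ \cc{K}/\langle b\rangle\to\cc{K}/\langle a\oplus b\rangle$ is cartesian in $\twoCAlg$; both fully faithfulness and essential surjectivity up to retracts genuinely use rigidity (via gluing of idempotents \`a la Balmer--Favi, or via descent for the Ind-completions followed by identifying compact objects), and saying that rigidity ``does the real work'' is not a substitute for doing it. Two smaller inputs you use silently and should at least cite: that the pushout $\cc{K}\otimes_{\cc{L}}\cc{L}/\langle b\rangle$ is again $\cc{K}/\langle f(b)\rangle$, so that the admissibles out of $\cc{K}$ really are the quotients by singly generated ideals; and that $\cc{K}/\langle a\rangle\to\cc{K}/\langle a\oplus b\rangle$ is a one-element cover whenever $b\in\sqrt{\langle a\rangle}$, which requires knowing that every object of $\langle b\rangle$ is tensor-nilpotent when $b$ is (the support-theoretic description of radicals). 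These are standard but they are inputs, not formalities. As a sketch of the cited proof your proposal is faithful; as a proof it still has its main theorem ahead of it.
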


Note that the global sections of the structure sheaf on $\Spec \cc{K}$ are identified with sections on the open subset $U(0) \subseteq \Spc \cc{K}$. From this, we immediately obtain the following. 

\begin{corollary}\label{thm:zariski2subcanonical}
  Let $\cc{K} \in \twoCAlg_{\rig}$ a rigid 2-ring. Then the counit of the adjunction $\Spec \dashv \Gamma$ yields an equivalence $\cc{K} \simeq \Gamma(\Spec \cc{K}, \cc{O}_{\Spec \cc{K}})$. In particular, the functor $\Spec: \twoCAlg_{\rig} \to \mm{LTop}(\GZarst)$ is fully faithful.
\end{corollary}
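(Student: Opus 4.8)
The strategy is to reduce the full faithfulness of $\Spec$ on $\twoCAlg_{\rig}$ to the claim that the counit $\varepsilon_{\cc{K}}\colon \cc{K} \to \Gamma(\Spec \cc{K}, \cc{O}_{\Spec \cc{K}})$ of the adjunction $\Spec \dashv \Gamma$ from \autoref{thm:absolutespectra} is an equivalence for every rigid 2-ring $\cc{K}$. Indeed, once we know this, a standard formal argument yields full faithfulness: for $\cc{K}, \cc{L} \in \twoCAlg_{\rig}$ we have
\[
  \map_{\LTop(\GZarst)}(\Spec \cc{K}, \Spec \cc{L}) \simeq \map_{\GZarst}(\Spec \cc{L}, \Gamma(\Spec \cc{K}, \cc{O})) \simeq \map_{\twoCAlg}(\Gamma(\Spec \cc{K}, \cc{O}), \cc{L})
\]
using the adjunction and then the identification $\Ind(\GZarst) \simeq \twoCAlg$ together with the presentability bookkeeping implicit in \autoref{thm:absolutespectra}; precomposing with $\varepsilon_{\cc{K}}$, which is an equivalence, identifies this last mapping space with $\map_{\twoCAlg}(\cc{K}, \cc{L})$, and one checks this composite is the map induced by $\Spec$. (One must be mildly careful that $\Gamma$ lands in $\twoCAlg$ rather than only $\Ind(\GZarst^{\op})$ for rigid inputs; this is exactly what the equivalence $\varepsilon_{\cc{K}}$ guarantees, since $\cc{K}$ itself is a bona fide small 2-ring.)

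So the real content is the equivalence $\cc{K} \simeq \Gamma(\Spec \cc{K}, \cc{O}_{\Spec \cc{K}})$. Here I would invoke \autoref{thm:zariskiisbalmer}: part (1) identifies the underlying topos of $\Spec^{\GZarst}\cc{K}$ with $\Shv(\Spc \cc{K})$, and part (2) (using rigidity of $\cc{K}$) identifies the structure sheaf as the unique $\twoCAlg$-valued sheaf on $\Spc\cc{K}$ sending the quasicompact open $U(a)$ to the Karoubi quotient $\cc{K}/\langle a\rangle$. Global sections of this sheaf are by definition its value on the terminal object of $\Shv(\Spc\cc{K})$; since $\Spc\cc{K}$ is quasicompact, the whole space $\Spc\cc{K}$ is itself quasicompact open, and it is the basic open $U(0)$ because $U(0) = \{\cc{P} \mid 0 \in \cc{P}\} = \Spc\cc{K}$ (every prime tt-ideal contains the zero object). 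Therefore $\Gamma(\Spec\cc{K}, \cc{O}_{\Spec\cc{K}}) \simeq \cc{K}/\langle 0\rangle \simeq \cc{K}$, the last equivalence because the thick tensor-ideal generated by the zero object is $0$ and the Karoubi quotient by the zero ideal is the idempotent completion, which is $\cc{K}$ itself since 2-rings are idempotent-complete by definition. Finally, one should check that the equivalence produced this way is the counit $\varepsilon_{\cc{K}}$ and not merely some abstract equivalence; this follows because $\varepsilon_{\cc{K}}$ is, under the adjunction, the unique map classifying $\mathrm{id}_{\Spec\cc{K}}$, and unwinding \autoref{thm:zariskiisbalmer}(2) the map $\cc{K} \to \cc{O}(U(0)) = \cc{K}/\langle 0\rangle$ exhibited there is the canonical quotient, which is the identity.

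The main obstacle I anticipate is not any single hard step but rather the careful coherence bookkeeping: making sure that the identification of $\Gamma(\Spec\cc{K},\cc{O})$ with $\cc{K}$ coming from \autoref{thm:zariskiisbalmer} genuinely agrees with the counit of the absolute-spectrum adjunction, and that the size issues (passing between $\twoCAlg$, $\GZarst = \twoCAlg^{\omega,\op}$, and $\Ind(\GZarst^{\op})$) are handled consistently so that the formal Yoneda argument for full faithfulness actually applies. Both of these are the kind of thing that is routine in principle but where one wants to cite the precise statements in \cite{aokiHigherGeometriesTensor} — in particular, the rigidity hypothesis is used twice (once to apply \autoref{thm:zariskiisbalmer}(2), once to ensure $\Gamma$ of a rigid spectrum is again a small rigid 2-ring so that the counit lives in $\twoCAlg_{\rig}$), and it is worth flagging explicitly where. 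Modulo this, the proof is essentially a one-line consequence of the observation that $U(0)$ is the whole Balmer spectrum.
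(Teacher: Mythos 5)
Your proposal is correct and follows essentially the same route as the paper: the paper derives the corollary immediately from \autoref{thm:zariskiisbalmer} by observing that the global sections are the sections over $U(0) = \Spc\cc{K}$, hence $\cc{K}/\langle 0\rangle \simeq \cc{K}$, with full faithfulness then formal from the adjunction of \autoref{thm:absolutespectra}. Your additional bookkeeping (identifying the abstract equivalence with the (co)unit, and the Yoneda argument for full faithfulness) is exactly what the paper leaves implicit.
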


\begin{notation}
  We will henceforth write $\mdef{\Spec \cc{K}}:= \Spec^{\GZarst}\cc{K}$ for $\cc{K} \in \twoCAlg$, and will refer to the underlying space by $|\! \Spec \cc{K}|$. Its associated structure sheaf will be denoted $\mdef{\cc{O}_{\cc{K}}}$.
\end{notation}

\section{Preliminaries on relative spectra and descent}\label{sec:relativeschemes}

In this section we collect certain results of \cite{lurieDerivedAlgebraicGeometryV} which will allow us to compute relative spectra by descent. We use this to write down the key observation that the relative spectra of $0$-localic $\infty$-topos arising from spectral schemes (\autoref{def:spectralschemes}) are themselves $0$-localic.

\subsection{\'Etale maps}

Recall the following definition of \cite[\S 6.3.5]{lurieHigherToposTheory2009}. 

\begin{definition}
  A map $f_{\ast}:\cc{X} \to \cc{Y} \in \LTop^{[1]}$ is said to be \tdef{\'etale} if it admits a factorization \[\cc{X} \xrightarrow{\ \pi^{\ast}} \cc{X}_{/U} \simarrow  \cc{Y}\] where $\pi^{\ast}$ is right adjoint to the projection $\pi_{!}: \cc{X}_{/U} \to \cc{X}$ for some $U \in \cc{X}$.
\end{definition}

Note that under these conditions, $\pi^{\ast}$ is itself in $\LTop^{[1]}$.

\begin{definition}\label{def:etale}
  Let $\cc{G}$ be a geometry. A morphism $f: (\cc{X}, \cc{O}_{\cc{X}}) \to (\cc{Y}, \cc{O}_{\cc{Y}}) \in \LTop(\cc{G})^{[1]}$ is said to be \tdef{\'etale} if the following conditions are satisfied:
  \begin{enumerate}
  \item The underlying geometric morphism $f^{\ast}: \cc{X} \to \cc{Y}$ is \'etale.
  \item The induced morphism $f^{\ast}\cc{O}_{\cc{X}} \to \cc{O}_{\cc{Y}}$ is an equivalence in $\Shv(\cc{Y}; \Ind(\cc{G}^{\op}))$.
  \end{enumerate}
\end{definition}

\begin{notation}
  We write $\mdef{\LTop_{\et}} \subset \LTop$ and $\mdef{\LTop(\cc{G})_{\et}} \subset \LTop(\cc{G})$ to indicate the wide subcategories spanned by the \'etale morphisms. Note that the induced forgetful functor $\LTop(\cc{G})_{\et} \to \LTop_{\et}$ is a left fibration.
\end{notation}

The following facts are recorded in \cite[Proposition 2.3.5, Proposition 2.3.18]{lurieDerivedAlgebraicGeometryV}.

\begin{proposition}\label{prop:omnibusetale} Let $\cc{G}$ be a geometry.
  \begin{enumerate}
  \item\label{prop:omnibusetale1} $\LTop(\cc{G})_{\et}$ admits small limits which are preserved by the inclusion $\LTop(\cc{G})_{\et} \to \LTop(\cc{G})$. Furthermore, an augmented simplicial diagram with values in $\LTop(\cc{G})_{\et}$ is a limit diagram if and only if the diagram of underlying topoi in $\LTop_{\et}$\footnote{or equivalently $\LTop$} is a limit diagram. 
  \item\label{prop:omnibusetale2} For every $\cc{X} \in \LTop(\cc{G})$, one has equivalences $\cc{X}^{\op} \simeq (\LTop(\cc{G})_{\et})_{/(\cc{X}',\cc{O}')}$ via $U \mapsto (\cc{X}_{/U}, \cc{O}|_{U})$. 
  \item\label{prop:omnibusetale3} Given a morphism of geometries $f: \cc{G} \to \cc{G}'$ and an \'etale morphism \[(\cc{X}, \cc{O_{X}})\to (\cc{X}_{/U}, \cc{O_{X}}|_{U}) \in \LTop(\cc{G})_{\et}^{[1]}\] the associated map $\Spec^{\cc{G}'}_{\cc{G}}(\cc{X}, \cc{O}) \to \Spec^{\cc{G}'}_{\cc{G}}(\cc{X}_{/U}, \cc{O_{X}}|_{U}) \in \LTop(\cc{G}')^{[1]}$ is also \'etale, and moreover sits in the following co-Cartesian diagram 
  \[\xymatrix{
  (\cc{X}, \cc{O_{X}})\ar[r] \ar[d] & (\cc{X}_{/U}, \cc{O_{X}}|_{U}) \ar[d] \\
\Spec^{\cc{G}'}_{\cc{G}}(\cc{X}, \cc{O}) \ar[r] & \Spec^{\cc{G}'}_{\cc{G}}(\cc{X}_{/U}, \cc{O_{X}}|_{U}) 
  }
  \]
  in $\LTop$, where the vertical morphisms are associated to the unit transformations of the adjunction $\Spec^{\cc{G}'}_{\cc{G}} \dashv \mm{res}$.
  \end{enumerate}
\end{proposition}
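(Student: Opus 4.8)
The plan is to deduce all three statements, following \cite[\S 2.3]{lurieDerivedAlgebraicGeometryV}, from the classification of étale geometric morphisms in \cite[\S 6.3.5]{lurieHigherToposTheory2009}: for an $\infty$-topos $\cc{X}$, the assignment $U \mapsto \cc{X}_{/U}$ gives an equivalence $\cc{X}^{\op} \simeq (\LTop_{\et})_{/\cc{X}}$, which carries colimits in $\cc{X}$ to limits in $(\LTop_{\et})_{/\cc{X}}$. I would first upgrade this to the $\cc{G}$-structured setting — this is the second assertion — then read off the first from the cocompleteness of $\infty$-topoi, and finally extract the third from the adjunction defining $\Spec^{\cc{G}'}_{\cc{G}}$ together with the compatibility of the spectrum with étale localization.

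For the second assertion: given its underlying étale geometric morphism, the remaining data of an étale morphism of $\cc{G}$-structured topoi — the structure sheaf upstairs and the structure map — is forced by the second condition of \autoref{def:etale}, which requires that structure map to be an equivalence. Since the forgetful functor $\LTop(\cc{G})_{\et} \to \LTop_{\et}$ is a left fibration (as noted above), so is the induced functor $(\LTop(\cc{G})_{\et})_{/(\cc{X}, \cc{O})} \to (\LTop_{\et})_{/\cc{X}}$ on slices, and the observation just made says its fibers are contractible; moreover every étale geometric morphism over $\cc{X}$ does extend to one of $\cc{G}$-structured topoi over $(\cc{X}, \cc{O})$, because inverse-image functors carry $\cc{G}$-structures to $\cc{G}$-structures. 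Hence that functor is an equivalence, and composing it with $\cc{X}^{\op} \simeq (\LTop_{\et})_{/\cc{X}}$ identifies $(\LTop(\cc{G})_{\et})_{/(\cc{X}, \cc{O})}$ with $\cc{X}^{\op}$, the object $U \in \cc{X}$ corresponding to $(\cc{X}_{/U}, \cc{O}|_{U})$.

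For the first assertion: the key consequence of the second is that an étale morphism of $\cc{G}$-structured topoi is determined by its underlying geometric morphism, so the wide inclusion $\LTop(\cc{G})_{\et} \hookrightarrow \LTop(\cc{G})$ creates and reflects the limits at issue, and it suffices to work with underlying topoi. There I would compute a limit of a small diagram in $\LTop_{\et}$ by passing to a vertex and using the classification to present it as a diagram of objects of a fixed topos $\cc{X}$; the colimit of those objects exists since $\infty$-topoi are cocomplete, and one checks that the associated étale topos over $\cc{X}$, with its evident legs, is a limit cone. The augmented-simplicial refinement is then the assertion that $\LTop(\cc{G}) \to \LTop$ detects limits of structured topoi once the structure sheaves are pinned down — which, by the second assertion, holds automatically for diagrams valued in the étale subcategory.

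For the third assertion: $\Spec^{\cc{G}'}_{\cc{G}}$ is a left adjoint (\autoref{def:relativespectrum}), hence colimit-preserving, and the content is that forming the relative spectrum commutes with passage to an étale slice: by the second assertion $(\cc{X}_{/U}, \cc{O}|_{U})$ encodes the object $U \in \cc{X}$, and one shows $\Spec^{\cc{G}'}_{\cc{G}}(\cc{X}_{/U}, \cc{O}|_{U})$ is the étale slice of $\Spec^{\cc{G}'}_{\cc{G}}(\cc{X}, \cc{O})$ at the image of $U$ under the unit $(\cc{X}, \cc{O}) \to \mm{res}\,\Spec^{\cc{G}'}_{\cc{G}}(\cc{X}, \cc{O})$; unwinding, this is exactly the displayed co-Cartesian square. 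I expect this last point to be the main obstacle: unlike the first two assertions it is not formal from the adjunction, and one has to invoke the explicit construction of the relative spectrum in \cite[\S 2.1--2.3]{lurieDerivedAlgebraicGeometryV}, where $\Spec^{\cc{G}'}_{\cc{G}}$ is assembled from absolute spectra glued along étale covers, in order to see that it interacts correctly with étale morphisms. Granting that, the remaining verifications are bookkeeping over the Higher Topos Theory classification.
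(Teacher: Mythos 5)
The paper does not prove this proposition at all: it is imported verbatim from \cite[Propositions 2.3.5 and 2.3.18]{lurieDerivedAlgebraicGeometryV}, with the sentence preceding the statement serving as the entire ``proof.'' So there is no argument in the paper to compare yours against; what you have written is a reconstruction of Lurie's proofs from the classification of \'etale geometric morphisms in \cite[\S 6.3.5]{lurieHigherToposTheory2009}, which is indeed the route Lurie takes. Your treatment of assertion (2) is correct in substance --- the structure sheaf and structure map upstairs are forced by the equivalence condition in \autoref{def:etale}, and essential surjectivity holds because left-exact left adjoints carry $\cc{G}$-structures to $\cc{G}$-structures --- though the technical packaging is that the slice comparison $(\LTop(\cc{G})_{\et})_{/(\cc{X},\cc{O})} \to (\LTop_{\et})_{/\cc{X}}$ is a trivial Kan fibration (a left fibration does not automatically induce a left fibration on overcategories; the contractibility of fibers is checked directly). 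Your assessment of (3) is also accurate: it is not formal from the adjunction and requires the explicit cell-by-cell construction of $\Spec^{\cc{G}'}_{\cc{G}}$ in \cite[\S 2.3]{lurieDerivedAlgebraicGeometryV}; you correctly flag this and defer to it.

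The one genuine soft spot is assertion (1). Your plan to ``compute a limit of a small diagram in $\LTop_{\et}$ by passing to a vertex and using the classification to present it as a diagram of objects of a fixed topos'' only makes sense when the diagram admits a cone point in $\LTop(\cc{G})_{\et}$ --- e.g.\ for the \v{C}ech-type diagrams living in a single slice $(\LTop(\cc{G})_{\et})_{/(\cc{X},\cc{O})}$, which is all this paper ever uses. For an arbitrary small diagram there is no distinguished vertex to pass to, and the existence of the limit together with its preservation by $\LTop(\cc{G})_{\et} \hookrightarrow \LTop(\cc{G})$ is precisely the content of \cite[Theorem 6.3.5.13]{lurieHigherToposTheory2009} and \cite[Proposition 2.3.5]{lurieDerivedAlgebraicGeometryV}: one must first form the limit in $\LTop$, then verify that the legs of the resulting cone are \'etale, which is a real argument rather than bookkeeping. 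If you restrict your claim to diagrams in a fixed slice (as the second sentence of (1), about augmented simplicial diagrams, effectively does in this paper's applications), your argument goes through; for the general statement you should simply cite the HTT result rather than sketch it this way.
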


For the following lemma, fix a morphism of geometries $f: \cc{G} \to \cc{G}'$, and let $(\cc{X}, \cc{O}) \in \LTop(\cc{G})$ be a fixed base.

\begin{lemma}\label{lem:relativespecsheaf}
 The relative spectrum functor $\Spec^{\cc{G}'}_{\cc{G}}$ sends limits in $\LTop(\cc{G})_{\et}$ to limits in $\LTop(\cc{G}')$.
\end{lemma}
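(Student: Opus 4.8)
The plan is to bootstrap this from \autoref{prop:omnibusetale}\eqref{prop:omnibusetale1} together with \autoref{prop:omnibusetale}\eqref{prop:omnibusetale3}. Recall that a limit diagram in $\LTop(\cc{G})_{\et}$ over some base $(\cc{X}, \cc{O})$ is, by \autoref{prop:omnibusetale}\eqref{prop:omnibusetale2}, exactly a limit diagram in $\cc{X}^{\op}$; so what needs to be checked is that applying $\Spec^{\cc{G}'}_{\cc{G}}$ to such a diagram yields a limit diagram in $\LTop(\cc{G}')$. By \autoref{prop:omnibusetale}\eqref{prop:omnibusetale3}, every morphism in the image diagram is \'etale, and in fact each object in the image is an \'etale map \emph{out of} the fixed object $\Spec^{\cc{G}'}_{\cc{G}}(\cc{X}, \cc{O})$: indeed the co-Cartesian square identifies $\Spec^{\cc{G}'}_{\cc{G}}(\cc{X}_{/U}, \cc{O}|_U)$ with $(\Spec^{\cc{G}'}_{\cc{G}}(\cc{X}, \cc{O}))_{/V}$ for $V$ the image of $U$ under the unit geometric morphism. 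So the whole image diagram lands in $\LTop(\cc{G}')_{\et}$ sliced over $\Spec^{\cc{G}'}_{\cc{G}}(\cc{X}, \cc{O})$.

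Now I would use \autoref{prop:omnibusetale}\eqref{prop:omnibusetale1}: a diagram in $\LTop(\cc{G}')_{\et}$ is a limit diagram if and only if the underlying diagram of topoi in $\LTop_{\et}$ is. By the co-Cartesian square of \autoref{prop:omnibusetale}\eqref{prop:omnibusetale3}, the underlying topos of $\Spec^{\cc{G}'}_{\cc{G}}(\cc{X}_{/U}, \cc{O}|_U)$ is $\cc{Y}_{/V}$ where $\cc{Y}$ is the underlying topos of $\Spec^{\cc{G}'}_{\cc{G}}(\cc{X}, \cc{O})$ and $V = g_{\ast}U$ for $g_{\ast}\colon \cc{X} \to \cc{Y}$ the unit. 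Via \autoref{prop:omnibusetale}\eqref{prop:omnibusetale2} applied to the single structured topos $\Spec^{\cc{G}'}_{\cc{G}}(\cc{X},\cc{O})$, limits of such slices correspond to limits in $\cc{Y}^{\op}$, i.e.\ colimits in $\cc{Y}$. Thus it suffices to show the functor $\cc{X} \to \cc{Y}$ induced by the unit on underlying topoi preserves the relevant colimits — but this functor is the left-exact \emph{left} adjoint $g^{\ast}$ (wait, more precisely the left adjoint portion $g^{\ast}$ of the geometric morphism), and left adjoints preserve all colimits. Hence the diagram $U \mapsto \cc{Y}_{/g^{\ast}U}$ sends limit cones in $\cc{X}^{\op}$ to limit cones in $\cc{Y}^{\op}$, which is what we want. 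Careful bookkeeping of which direction the adjoint goes — $\Spec^{\cc{G}'}_{\cc{G}}$ is a \emph{left} adjoint to restriction, and the unit $(\cc{X},\cc{O}) \to \mm{res}\,\Spec^{\cc{G}'}_{\cc{G}}(\cc{X},\cc{O})$ has an underlying geometric morphism whose inverse image functor $\cc{X} \to \cc{Y}$ is... one should double-check this is the colimit-preserving direction — is the step I expect to need the most care.

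The main obstacle, then, is purely variance/2-categorical bookkeeping: reconciling the contravariance in $\autoref{prop:omnibusetale}\eqref{prop:omnibusetale2}$ (where $\cc{X}^{\op} \simeq (\LTop(\cc{G})_{\et})_{/(\cc{X},\cc{O})}$), the direction of the unit of $\Spec^{\cc{G}'}_{\cc{G}} \dashv \mm{res}$, and the convention (from the excerpt's discussion of $\LTop$) that morphisms in $\LTop$ are recorded by their left-exact left adjoints. Once these are lined up, the argument is a formal consequence of \autoref{prop:omnibusetale}: limits in $\LTop(\cc{G}')_{\et}$ are detected on underlying topoi, underlying topoi of relative spectra of \'etale objects are computed by the co-Cartesian squares, and the induced functor on underlying topoi is a cocontinuous left adjoint, hence sends the relevant limit cone to a limit cone. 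I would also remark that this immediately gives the slogan we want downstream: $\Spec^{\cc{G}'}_{\cc{G}}$ can be computed by descent along any \'etale (in particular open) cover, since such covers give \v{C}ech-type limit diagrams in $\LTop(\cc{G})_{\et}$.
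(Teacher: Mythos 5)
Your argument is correct and is essentially the paper's own proof: both use \autoref{prop:omnibusetale}.(2)--(3) to identify the functor that $\Spec^{\cc{G}'}_{\cc{G}}$ induces on étale slices with the unit's left-exact left adjoint $\eta^{\ast}\colon \cc{X} \to \cc{X}'$, and then conclude from cocontinuity of left adjoints together with the fact that limits in the étale (co)slice over the relative spectrum are computed in $\LTop(\cc{G}')$. The only step the paper spells out that you elide is the formal reduction of an arbitrary limit diagram in $\LTop(\cc{G})_{\et}$ to one lying in the coslice under its limit vertex (using that the coslice projection creates limits), which supplies the fixed base $(\cc{X},\cc{O})$ your argument presupposes.
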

\begin{proof}
  We first demonstrate the simpler statement that the relative spectrum sends limits in $(\LTop(\cc{G})_{\et})_{(\cc{X},\cc{O})/}$ to limits in $\LTop(\cc{G}')$, given $(\cc{X}, \cc{O}) \in \LTop(\cc{G})$. Write $(\cc{X}', \cc{O}'):= \Spec^{\cc{G}'}_{\cc{G}}(\cc{X}, \cc{O}) \in \LTop(\cc{G}')$ and let $\eta^{\ast}: \cc{X} \to \cc{X}' \in \LTop^{[1]}$ denote the induced counit map. \autoref{prop:omnibusetale} implies that the relative spectrum construction lifts to a functor $(\LTop(\cc{G})_{\et})_{(\cc{X},\cc{O})/} \to (\LTop(\cc{G}')_{\et})_{(\cc{X}',\cc{O}')/}$, and the same proposition implies limits in this latter $\infty$-category may be computed in $\LTop(\cc{G}')$. It thus suffices to show that this lift preserves limits. Applying the equivalences of \hyperref[prop:omnibusetale2]{\autoref*{prop:omnibusetale}.(2)} to $(\cc{X}, \cc{O})$ and $(\cc{X}', \cc{O}')$ yields a composite of the following form
  \[\cc{X} \simarrow (\LTop(\cc{G})_{\et})^{\op}_{/(\cc{X},\cc{O})} \rightarrow (\LTop(\cc{G}')_{\et})^{\op}_{/(\cc{X}',\cc{O}')} \simarrow \cc{X}^{\prime}\]
  which may be identified with $\eta^{\ast}: \cc{X} \to \cc{X}'$; as this functor is continuous, the claim follows. Now, given a diagram $p: K \to \LTop(\cc{G})_{\et}$ admitting a limit $p^{\triangleleft}: K^{\triangleleft} \to \LTop(\cc{G})_{\et}$, one has unique lifts of $p, p^{\triangleleft}$ to diagrams $q: K \to (\LTop(\cc{G})_{\et})_{p^{\triangleleft}(\{\infty)\}/}$ and similarly for $q^{\triangleleft}$. Since the forgetful functor $(\LTop(\cc{G})_{\et})_{p^{\triangleleft}(\{\infty)\}/} \to \LTop(\cc{G})_{\et}$ creates limits, the diagram $q^{\triangleleft}$ must itself be a limit diagram for $q$. We learn that the composite \[q^{\triangleleft}: K^{\triangleleft} \to (\LTop(\cc{G})_{\et})_{p^{\triangleleft}(\{\infty)\}/} \to \LTop(\cc{G}')\] is a limit diagram over the restriction to $K$. Since this is naturally identified with $\Spec^{\cc{G}'}_{\cc{G}} \circ p^{\triangleleft}$, the result follows.
\end{proof}

We conclude with the following definition.

\begin{definition}\label{def:canonicaltopology}
  Given a geometry $\cc{G}$, let $\mdef{(\LTop(\cc{G})^{\op}, \can)}$ denote the (locally large, very large) site where $\mdef{\can}$ is the Grothendieck topology consisting of exactly those sieves $\cc{C}_{/(\cc{X}, \cc{O})} \subset (\LTop(\cc{G})^{\op})_{/(\cc{X}, \cc{O})}$ containing a family of maps $f_{i}: (\cc{X}_{i}, \cc{O}_{i}) \to (\cc{X}, \cc{O})$ satisfying:
\begin{enumerate}
\item Each $f_{i}$ is \'etale.
\item Under the equivalence $(\LTop(\cc{G})^{\op}_{\et})_{/(\cc{X}, \cc{O})} \simeq \cc{X}$ of \hyperref[prop:omnibusetale2]{\autoref*{prop:omnibusetale}.(2)}, $f_{i}$ corresponds to a family $\{f_{i}: U_{i} \to \unit\}$ with $\coprod_{I}U_{i} \twoheadrightarrow \unit$ an effective epimorphism.
\end{enumerate}
\end{definition}

\begin{example}
  The Yoneda embedding $\yo: \LTop(\cc{G})^{\op} \to \Fun(\LTop(\cc{G}), \widehat{\An})$ has essential image contained in $\Shv_{\can}(\LTop(\cc{G})^{\op}; \widehat{\An})$. 
\end{example}

\begin{example}\label{ex:relativespecstructuresheaf}
    Given a morphism of geometries $f:\cc{G} \to \cc{G}'$, the assignment \[\{(\cc{X}, \cc{O}) \mapsto \Gamma_{\cc{G}'}(\Spec^{\cc{G}'}_{\cc{G}}(\cc{X}, \cc{O})\} \in \Fun(\LTop(\cc{G}), \Ind(\cc{G}^{\prime \op}))\] is a sheaf on $(\LTop(\cc{G}), \can)$. Indeed, let $\cc{C}_{/(\cc{X}, \cc{O})}$ be an arbitrary covering sieve. By definition, $\cc{C}_{/(\cc{X}, \cc{O})} \hookrightarrow \cc{X}_{/1}$ is associated to a covering sieve of $\unit \in \cc{X}$ in the canonical topology, and thus $ (\cc{X}, \cc{O_{X}}) \simeq \varprojlim_{\cc{C}_{/(\cc{X}, \cc{O})}}(U, \cc{O}|_{U})$ in $(\LTop(\cc{G})_{\et})_{(\cc{X}, \cc{O})/}$. It follows that the natural map \[\Gamma_{\cc{G}'}(\Spec^{\cc{G}'}_{\cc{G}}(\cc{X}, \cc{O}_{\cc{X}})) \simarrow {\varprojlim}_{\cc{C}_{/(\cc{X}, \cc{O})}}\Gamma_{\cc{G}'}(\Spec^{\cc{G}'}_{\cc{G}}(U, \cc{O}|_{U}))\] is an equivalence by the proof of \autoref{lem:relativespecsheaf} and the fact that $\Gamma_{\cc{G}'}$ is a right adjoint, yielding the claim.
\end{example}

\subsection{Localic structured topoi}

\begin{definition}
  Let $\cc{X} \in \RTop$. We say $\cc{X}$ is \tdef{$0$-localic} if for any $\cc{Y} \in \RTop$ one has an equivalence
  \[
    \Fun^{\mm{geom}}(\cc{Y}, \cc{X}) \simeq \Fun^{\mm{geom}}(\tau_{\leq -1}\cc{Y}, \tau_{\leq -1}\cc{X})
  \]
  where $\tau_{\leq -1}\colon\RTop \to \Cat$ sends any $\infty$-topos to the full subcategory of its $(-1)$-truncated objects.
\end{definition}

Recall that a \tdef{frame} is a partially ordered set admitting arbitrary colimits and finite limits, such that finite limits distribute over infinite colimits. We write $\mdef{\mm{Loc}}$ to denote the category of frames with morphisms given by right adjoints which admit left-exact left adjoints. Given a frame $F$, we furthermore write $\Shv(F):= \Fun^{\mm{lim}}(F^{\op}, \An)$. The following result is an agglomeration of the results of \cite[6.4.2.1, 6.4.5]{lurieHigherToposTheory2009}.

\begin{theorem}[Omnibus 0-localic topoi]\label{thm:zerolocalic}
  The functor $\Shv(-) :  \mm{Loc} \to \RTop$ is fully faithful with essential image exactly the $0$-localic topoi. It furthermore admits a left adjoint, given by $\cc{X} \mapsto \tau_{\leq -1}\cc{X}$, whose unit transformation is referred to as the \mdef{$0$-localic reflection}.
\end{theorem}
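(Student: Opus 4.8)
The plan is to exhibit $\Shv(-)$ and $\tau_{\le -1}$ as an adjoint pair, deduce full faithfulness of $\Shv(-)$ by identifying the counit, and recognize the essential image as the $0$-localic topoi via the Yoneda lemma. First I record that $\Shv(F)$ is an $\infty$-topos: regarding the frame $F$ as a site under its canonical topology, in which a sieve on $u\in F$ is covering exactly when it is generated by a family $\{u_i\le u\}$ with $\bigvee_i u_i=u$, the $\infty$-category $\Shv(F)$ is an accessible left exact localization of the presheaf $\infty$-topos $\Fun(F^{\op},\An)$, hence an $\infty$-topos. Next I check that $\tau_{\le -1}$ defines a functor $\RTop\to\mm{Loc}$: for an $\infty$-topos $\cc X$ the poset $\tau_{\le -1}\cc X$ of $(-1)$-truncated objects, i.e.\ subobjects of the terminal object, has all joins, formed as images of coproducts, and all finite meets, formed as pullbacks, with distributivity following from universality of colimits in $\cc X$; and the inverse image $f^*$ of a geometric morphism, being left exact and colimit-preserving, restricts to a frame homomorphism, so a geometric morphism $\cc X\to\cc Y$ induces a morphism $\tau_{\le -1}\cc X\to\tau_{\le -1}\cc Y$ in $\mm{Loc}$.

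The crux is the adjunction $\tau_{\le -1}\dashv\Shv(-)$. For an $\infty$-topos $\cc X$ and a frame $F$, geometric morphisms $\cc X\to\Fun(F^{\op},\An)$ correspond to left exact functors $\phi\colon F\to\cc X$ (since $F$ has finite limits, flatness of a functor out of $F$ is the same as left exactness), and such a $\phi$ factors through the localization $\Shv(F)$ precisely when it carries each covering family $\{u_i\le u\}$ to an effective epimorphism $\coprod_i\phi(u_i)\twoheadrightarrow\phi(u)$. The key point is that a left exact $\phi$ automatically takes values in $\tau_{\le -1}\cc X$: since $F$ is a poset, $u\wedge u=u$ with both projections equal to $\mathrm{id}_u$, so --- as $\phi$ preserves the terminal object and this pullback --- the diagonal $\phi(u)\to\phi(u)\times\phi(u)$ is an equivalence, i.e.\ $\phi(u)$ is $(-1)$-truncated. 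Thus the datum of such a $\phi$ is exactly a map $F\to\tau_{\le -1}\cc X$ preserving finite meets (by left exactness) and arbitrary joins (by the covering condition) --- a frame homomorphism, equivalently a morphism $\tau_{\le -1}\cc X\to F$ in $\mm{Loc}$. Tracking naturality in both variables gives an equivalence $\map_{\RTop}(\cc X,\Shv(F))\simeq\map_{\mm{Loc}}(\tau_{\le -1}\cc X,F)$, so $\tau_{\le -1}$ is left adjoint to $\Shv(-)$, with unit $\eta_{\cc X}\colon\cc X\to\Shv(\tau_{\le -1}\cc X)$ the $0$-localic reflection.

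Full faithfulness of $\Shv(-)$ is then equivalent to the counit $\tau_{\le -1}\Shv(F)\to F$ being an equivalence, which is the computation that the $(-1)$-truncated objects of $\Shv(F)$ are exactly the representable sheaves $\yo(u)$ for $u\in F$: a subobject of the terminal sheaf is determined by the down-closed, join-closed subset of $F$ on which it is inhabited, and those are precisely the principal down-sets $F_{/u}$. For the essential image, an $\infty$-topos $\cc X$ lies in the image of $\Shv(-)$ if and only if $\eta_{\cc X}$ is an equivalence; precomposition with $\eta_{\cc X}$ identifies the map $\map_{\RTop}(\cc Y,\cc X)\to\map_{\RTop}(\cc Y,\Shv(\tau_{\le -1}\cc X))\simeq\map_{\mm{Loc}}(\tau_{\le -1}\cc Y,\tau_{\le -1}\cc X)$ with the functor $\tau_{\le -1}$, so by the Yoneda lemma (letting $\cc Y$ vary) $\eta_{\cc X}$ is an equivalence exactly when $\cc X$ satisfies the defining property of a $0$-localic topos. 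I expect the main obstacle to be the adjunction step --- pinning down the universal property of the sheaf topos $\Shv(F)$ among $\infty$-topoi (the topological-localization bookkeeping of \cite{lurieHigherToposTheory2009}) and the naturality of the resulting equivalence --- together with the small but decisive fact that left exact functors out of a frame are automatically valued in subterminal objects, which is exactly what forces $0$-localic topoi to coincide with locales.
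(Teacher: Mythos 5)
The paper does not actually prove this statement---it is imported wholesale from \cite[6.4.2.1, 6.4.5]{lurieHigherToposTheory2009}---and your sketch is a correct reconstruction of precisely that argument: the universal property of presheaf $\infty$-topoi together with topological localization, the decisive observation that a left exact functor out of a poset automatically lands in subterminal objects, the identification of the $(-1)$-truncated objects of $\Shv(F)$ with the representables (giving the invertible counit, hence full faithfulness), and the Yoneda-style characterization of the essential image as the $0$-localic topoi. The one point worth flagging is that you (correctly) interpret $\Shv(F)$ as sheaves for the canonical topology on the frame, whereas the paper's literal $\Fun^{\mm{lim}}(F^{\op},\An)$ would be too restrictive for a poset (coproducts in $F$ are joins, so genuine limit-preservation would force every value to be $(-1)$-truncated); this is a defect of the paper's notation rather than of your proof, which follows the convention under which the cited results of \cite{lurieHigherToposTheory2009} are actually stated.
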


\begin{notation} We choose to work in the following settings, mirroring algebro-geometric convention.  
  \begin{enumerate}
  \item Let $\mdef{\RTop^{\mm{loc}}_{\CAlg}} := \LTop(\GZarsp)^{\op}$. We refer to this as the $\infty$-category of \tdef{locally spectrally ringed topoi}.
  \item Let $\mdef{\RTop^{\Dir}_{\CAlg}} := \LTop(\GDirsp)^{\op}$. We refer to this as the $\infty$-category of \tdef{Dirac-locally spectrally ringed topoi}.
  \item Let $\mdef{\RTop^{\mm{loc}}_{\twoCAlg}} := \LTop(\GZarst)^{\op}$. We refer to this as the $\infty$-category of \tdef{locally 2-ringed topoi}.
  \end{enumerate}
  In each of the cases above, morphisms $(\cc{X}, \cc{O}_{\cc{X}}) \to (\cc{Y}, \cc{O}_{\cc{Y}})$ are \emph{geometric} morphisms $f_{\ast}: \cc{X} \to \cc{Y}$ along with morphisms $\cc{O}_{\cc{Y}} \to f_{\ast}\cc{O}_{\cc{X}}$ satisfying a locality condition on their mates. 
\end{notation}

\begin{example}
  There is a full subcategory $\mdef{\mm{Top}^{\mm{loc}}_{\CAlg}} \hookrightarrow \mm{LTop}(\GZarsp)^{\op}$ corresponding exactly to the \emph{locally spectrally-ringed spaces} with hypercomplete sheaves of commutative rings. These have a more familiar definition purely in terms of the pointwise condition of \autoref{rem:localityonstalks}. We refer the reader to \cite[\S 2]{lurieDerivedAlgebraicGeometry2011a} for details.
\end{example}

\begin{notation}
  We will henceforth write $\mdef{\mm{Loc}_{\twoCAlg}} := \RTop_{\twoCAlg} \times_{\RTop} \mm{Loc}$, and analogously with $\mdef{\mm{Loc}^{\loc}_{\twoCAlg}}$.
\end{notation}

\begin{example}\label{ex:zariskispectralocalic}
  Given $\cc{K} \in \twoCAlg$, \autoref{thm:zariskiisbalmer} implies that the underlying $\infty$-topos of $\Spec \cc{K}$ is 0-localic and hence $\Spec \cc{K} \in \mm{Loc}_{\twoCAlg}^{\loc}$.
\end{example}

\subsection{Relative spectra of spectral schemes are 0-localic} We recall one equivalent definition of the $\infty$-category of spectral schemes, following \cite[Definition 2.7]{lurieDerivedAlgebraicGeometry2011a}.

\begin{definition}\label{def:spectralschemes}
  A \tdef{spectral scheme}\footnote{these are potentially nonconnective by default.}, (resp. \tdef{Dirac spectral scheme}) is an object $X \in \RTop^{\loc}_{\CAlg}$ (resp. $\RTop^{\Dir}_{\CAlg}$ satisfying the following two conditions.
  \begin{enumerate}
  \item The underlying $\infty$-topos (which we will denote $\mdef{\Shv(X)}$) is 0-localic. 
  \item There is an effective epimorphism $\{\coprod U_{i} \twoheadrightarrow \unit\}$ in $\Shv(X)$ such that for every $i$ there exists $R\in \CAlg$ and an equivalence $(\Shv(X)_{/U_{i}}, \cc{O}_{X}|_{U_{i}}) \simeq \Spec R$ in $\mm{LTop}(\GZarsp)$ (resp. $\mm{LTop(\GDirsp)}$).
  \end{enumerate}
 We write $\mdef{\SpSch} \subset \RTop^{\loc}_{\CAlg}$, (resp. $\mdef{\SpSch^{\Dir}}$) to denote the full subcategory of spectral schemes. 
\end{definition}

\begin{definition}
    We say a spectral scheme is \tdef{quasicompact and quasiseparated (qcqs)} if the underlying 0-localic $\infty$-topos $\Shv(X)$ is coherent in the sense of \cite[\S 3]{lurieDerivedAlgebraicGeometry2011a}.
\end{definition}

The following result is the main import of this subsection. 

\begin{proposition}\label{prop:relativespecislocalic}
  Let $\cc{G} = \GZarsp$ \emph{(}resp. $\GDirsp$\emph{)}, let $\cc{G}'= \GZarst$. Let $X \in \SpSch, \SpSch^{\Dir}$. Then $\Spec^{\cc{G}'}_{\cc{G}}X \in \mm{Loc}^{\loc}_{\twoCAlg}$.
\end{proposition}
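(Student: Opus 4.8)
I would begin by unwinding the statement: since $\Spec^{\cc{G}'}_{\cc{G}}$ takes values in $\LTop(\GZarst)$, whose opposite is $\RTop^{\loc}_{\twoCAlg}$, and since $\mm{Loc}^{\loc}_{\twoCAlg}$ is by definition the full subcategory of $\RTop^{\loc}_{\twoCAlg}$ on objects whose underlying $\infty$-topos is $0$-localic, the only thing to prove is that the underlying $\infty$-topos $\cc{Y} := \Shv(\Spec^{\cc{G}'}_{\cc{G}}X)$ is $0$-localic. The plan is to produce an open cover of $\Spec^{\cc{G}'}_{\cc{G}}X$ whose members are relative spectra of \emph{affine} opens of $X$, to note that these are $0$-localic and arise as open subtopoi of $\cc{Y}$, and then to conclude using that $0$-localicness is a local property of a topos with respect to covers by subterminal objects.

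For the cover: since $X$ is a spectral scheme (resp.\ Dirac spectral scheme) with $0$-localic underlying topos, I would fix an effective epimorphism $\coprod_{i} U_{i} \twoheadrightarrow \unit$ in $\Shv(X)$ with each $U_{i}$ \emph{subterminal} and $(\Shv(X)_{/U_{i}}, \cc{O}_{X}|_{U_{i}}) \simeq \Spec^{\cc{G}} R_{i}$ for $R_{i} \in \CAlg$ — possible because affine opens of a scheme are honest open subsets. The slice projections $\Spec^{\cc{G}} R_{i} \simeq (\Shv(X)_{/U_{i}}, \cc{O}_{X}|_{U_{i}}) \to X$ are étale in $\LTop(\cc{G})$, so applying $\Spec^{\cc{G}'}_{\cc{G}}$ and invoking \hyperref[prop:omnibusetale3]{\autoref*{prop:omnibusetale}.(3)}, the maps $\Spec^{\cc{G}'}_{\cc{G}}(\Spec^{\cc{G}} R_{i}) \to \Spec^{\cc{G}'}_{\cc{G}} X$ are étale and sit in co-Cartesian squares in $\LTop$; reading off the lower-right corner identifies $\cc{Y}_{/W_{i}} \simeq \Shv(\Spec^{\cc{G}'}_{\cc{G}}(\Spec^{\cc{G}} R_{i}))$, where $W_{i} := \eta^{\ast} U_{i}$ and $\eta^{\ast}\colon \Shv(X) \to \cc{Y}$ is the left-exact left adjoint underlying the unit of $\Spec^{\cc{G}'}_{\cc{G}} \dashv \mm{res}$. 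As $\eta^{\ast}$ is left-exact it sends the $U_{i}$ to subterminal $W_{i}$, and since it preserves the terminal object and effective epimorphisms we obtain a cover $\coprod_{i} W_{i} \twoheadrightarrow \unit$ in $\cc{Y}$. Next I would identify $\Spec^{\cc{G}'}_{\cc{G}}(\Spec^{\cc{G}} R_{i})$: because the transformation $\cc{G} \to \cc{G}'$ is the one induced by $A \mapsto \Perf_{A}$, relative spectra compose, and on discrete geometries the relative spectrum along $A \mapsto \Perf_{A}$ sends $(\An, \underline{R})$ to $(\An, \underline{\Perf_{R}})$, one gets $\Spec^{\cc{G}'}_{\cc{G}}(\Spec^{\cc{G}} R_{i}) \simeq \Spec^{\cc{G}'} \Perf_{R_{i}} = \Spec \Perf_{R_{i}}$, whose underlying topos is $\Shv(\Spc \Perf_{R_{i}})$ by \autoref{thm:zariskiisbalmer} and hence $0$-localic. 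So every $\cc{Y}_{/W_{i}}$ is $0$-localic.

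Finally I would invoke the following local statement: if a topos $\cc{Y}$ admits an effective-epimorphism cover $\coprod_{i} W_{i} \twoheadrightarrow \unit$ by subterminal objects with all slices $\cc{Y}_{/W_{i}}$ $0$-localic, then $\cc{Y}$ is $0$-localic. This follows from descent, $\cc{Y} \simeq \varprojlim_{[n] \in \Delta} \cc{Y}_{/W_{n}}$ in $\RTop$ for the Čech nerve $W_{\bullet}$: each $W_{n}$ is a coproduct of finite meets $W_{i_{0}} \wedge \dots \wedge W_{i_{n}}$, which are subterminal and lie below $W_{i_{0}}$, so $\cc{Y}_{/W_{n}}$ is a small product of open subtopoi of the $0$-localic topoi $\cc{Y}_{/W_{i_{0}}}$ and is therefore $0$-localic; and $0$-localic topoi are closed under small limits in $\RTop$, which follows from \autoref{thm:zerolocalic} since $\Shv(-)\colon \mm{Loc} \to \RTop$ is a right adjoint. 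Hence $\cc{Y}$ is $0$-localic and $\Spec^{\cc{G}'}_{\cc{G}}X \in \mm{Loc}^{\loc}_{\twoCAlg}$. The Dirac case is identical, using that $\GDirsp \to \GZarst$ is also induced by $A \mapsto \Perf_{A}$.

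The part I expect to be the main obstacle — and the reason one cannot simply run \autoref{lem:relativespecsheaf} on the Čech nerve of an affine cover of $X$ and argue termwise — is that intersections of affine opens in a general spectral scheme need not be affine, so their relative spectra are not \emph{a priori} $0$-localic. Working instead with the cover $\{W_{i} = \eta^{\ast} U_{i}\}$ of $\Spec^{\cc{G}'}_{\cc{G}}X$ sidesteps this, since that cover is by \emph{subterminal} objects, forcing all the Čech terms downstairs to be genuine open subtopoi (or small products thereof) of the $0$-localic affine pieces. The secondary point needing care is a precise statement and citation for closure of $0$-localic topoi under the relevant limits — equivalently, the locality of $0$-localicness — which is standard but should not be glossed over.
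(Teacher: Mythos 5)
Your setup is exactly the paper's: cover $X$ by subterminal affine opens $U_{i}$, push them forward along the unit $\eta^{\ast}$ to a subterminal cover $\{W_{i}\}$ of $\cc{Y} = \Shv(\Spec^{\cc{G}'}_{\cc{G}}X)$, identify $\cc{Y}_{/W_{i}} \simeq \Shv(\Spec\Perf_{R_{i}})$ via \autoref{prop:omnibusetale}.(3) and \autoref{ex:relativeaffinespectra}, and conclude each slice is $0$-localic from \autoref{ex:zariskispectralocalic}. Your diagnosis of why one cannot argue termwise on an affine Čech nerve upstairs is also correct. The gap is in the final local-to-global step. The Čech decomposition $\cc{Y} \simeq \varprojlim_{[n]\in\Delta}\cc{Y}_{/W_{n}}$ is a limit along the pullback functors, i.e.\ a limit in $\LTop$ and hence a \emph{colimit} in $\RTop$, not a limit in $\RTop$. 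The adjunction $\tau_{\leq -1} \dashv \Shv(-)$ of \autoref{thm:zerolocalic} exhibits the $0$-localic topoi as a reflective subcategory of $\RTop$, which gives closure under limits in $\RTop$ --- the wrong variance for your decomposition. Worse, the principle you are implicitly using, that a topos presented by a Čech diagram with $0$-localic terms is $0$-localic, is false without re-invoking subterminality at the gluing stage: for $G$ a nontrivial finite group, the object $G$ (with translation action) gives an effective epimorphism $G \twoheadrightarrow \unit$ in $\cc{Y} = \Fun(BG,\An)$ whose Čech terms $\cc{Y}_{/G^{\times_{\unit}(n+1)}} \simeq \prod_{G^{n}}\An$ are all $0$-localic, yet $\cc{Y}$ is $1$-localic and not $0$-localic. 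Your argument uses subterminality of the $W_{i}$ only to identify the Čech terms, not to glue them, so as written it would also ``prove'' that $\Fun(BG,\An)$ is $0$-localic.

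The local-to-global statement you want is true, but it is precisely the content of the paper's \autoref{lem:localicdevissage}, and its proof is not formal: one shows that the $0$-localic reflection $\eta_{\ast}\colon \cc{Y} \to \Shv(\tau_{\leq -1}\cc{Y})$ becomes an equivalence after slicing over each $W_{i}$ --- this is where subterminality enters, through the identification $(\tau_{\leq -1}\cc{Y})_{/W_{i}} \simeq \tau_{\leq -1}(\cc{Y}_{/W_{i}})$ --- and only then applies descent \cite[Theorem 6.1.3.9]{lurieHigherToposTheory2009} to conclude that $\eta_{\ast}$ is an equivalence globally. Replacing your last paragraph with an appeal to (or a proof along the lines of) \autoref{lem:localicdevissage} repairs the argument; everything before that point is sound and agrees with the paper's proof.
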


We will need the following establishing lemmas.

\begin{lemma}\label{lem:lexrespectstrunc}
Any left exact functor $F: \cc{C} \to \cc{D}$ between $\infty$-categories admitting finite products\footnote{This condition is easily dropped by passing to presheaf categories.} sends $(-1)$-truncated objects of $\cc{C}$ to $(-1)$-truncated objects of $\cc{D}$. 
\end{lemma}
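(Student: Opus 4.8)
The plan is to reduce the statement to the fact that $(-1)$-truncatedness of an object is detected by a single finite limit, namely the diagonal being an equivalence, and this is manifestly preserved by any left exact functor.

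First I would record the elementary observation that an object $X$ of an $\infty$-category $\cc{C}$ is $(-1)$-truncated if and only if the product $X \times X$ exists and the diagonal $\Delta_{X}\colon X \to X \times X$ is an equivalence. In one direction, if $X$ is $(-1)$-truncated then for every $Y$ the mapping space $\map_{\cc{C}}(Y, X)$ is $(-1)$-truncated, i.e. empty or contractible, so the diagonal
\[
  \map_{\cc{C}}(Y, X) \longrightarrow \map_{\cc{C}}(Y, X) \times \map_{\cc{C}}(Y, X)
\]
is an equivalence; naturality in $Y$ exhibits $X$, equipped with both projections equal to $\mathrm{id}_{X}$, as the product $X \times X$, and identifies $\Delta_{X}$ with this equivalence. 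Conversely, if $X \times X$ exists and $\Delta_{X}$ is an equivalence, then postcomposition with $\Delta_{X}$ identifies $\map_{\cc{C}}(Y, X)$ with its own diagonal into $\map_{\cc{C}}(Y, X) \times \map_{\cc{C}}(Y, X)$ for every $Y$, forcing $\map_{\cc{C}}(Y, X)$ to be $(-1)$-truncated, whence $X$ is $(-1)$-truncated.

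Now let $X \in \cc{C}$ be $(-1)$-truncated. By the previous paragraph the finite limit $X \times X$ exists and $\Delta_{X}$ is an equivalence. Since $F$ is left exact it preserves this finite limit, so $F(X) \times F(X)$ exists in $\cc{D}$, the canonical comparison $F(X \times X) \simarrow F(X) \times F(X)$ is an equivalence, and composing $F(\Delta_{X})$ with it yields $\Delta_{F(X)}$, because postcomposing further with either projection gives $F(\pi_{i} \circ \Delta_{X}) = F(\mathrm{id}_{X}) = \mathrm{id}_{F(X)}$. As $F$ preserves equivalences, $F(\Delta_{X})$ is an equivalence, hence so is $\Delta_{F(X)}$; by the characterization above, $F(X)$ is $(-1)$-truncated, as desired.

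I expect no serious obstacle: the only points requiring care are the bookkeeping identifying $F(\Delta_{X})$ with $\Delta_{F(X)}$ under the product comparison, and pinning down the meaning of ``left exact'' in the generality of the footnote. For the latter, observe that the argument above never used that $\cc{C}$ or $\cc{D}$ admits all finite products, only that $X \times X$ exists, which is automatic once $X$ is $(-1)$-truncated; so, reading ``left exact'' as ``preserves those finite limits that exist'', the finite-products hypothesis may simply be dropped. Alternatively, as indicated in the footnote, one passes to the presheaf $\infty$-categories $\mathrm{P}(\cc{C})$ and $\mathrm{P}(\cc{D})$, where all products exist and where the Yoneda embeddings preserve and reflect $(-1)$-truncated objects, reducing to the case already treated.
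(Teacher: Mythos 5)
Your proposal is correct and follows exactly the same route as the paper, which simply observes that $(-1)$-truncatedness is equivalent to the diagonal $\Delta\colon c \to c \times c$ being an equivalence (a condition visibly preserved by left exact functors) and leaves the verification to the reader. You have merely supplied the details the paper omits, including the careful identification of $F(\Delta_{X})$ with $\Delta_{F(X)}$.
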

\begin{proof}
  This boils down to the claim that an object $c \in \cc{C}$ is $(-1)$-truncated if and only if the diagonal map $\Delta: c \to c \times c$  is an equivalence, the verification of which we leave to the reader.
\end{proof}

\begin{lemma}\label{lem:localicdevissage}
  Let $f_{\ast}:\cc{X} \to \cc{Y} \in \RTop^{[1]}$ be a geometric morphism satisfying the following conditions:
  \begin{enumerate}
  \item $\cc{Y}$ is 0-localic.
  \item There is an effective epimorphism $\coprod_{I} U_{i} \twoheadrightarrow \unit \in \cc{Y}^{[1]}$ such that $U_{i} \in \tau_{\leq -1}\cc{Y}$ and $\cc{X}_{/f^{\ast}U_{i}}$ is $0$-localic, for every $i\in I$.
  \end{enumerate}
  Then $\cc{X}$ is $0$-localic.
\end{lemma}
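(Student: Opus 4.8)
The plan is to exhibit $\cc{X}$ as a limit in $\RTop$ of \'etale slices that are manifestly $0$-localic, and then invoke that the $0$-localic $\infty$-topoi are closed under limits in $\RTop$ (they form a reflective subcategory, by \autoref{thm:zerolocalic}). Set $V := \coprod_{i \in I} f^{\ast}U_{i} \in \cc{X}$. Since $f^{\ast}$ is a left-exact left adjoint it preserves the terminal object, coproducts and effective epimorphisms, so $V \twoheadrightarrow \unit$ is an effective epimorphism in $\cc{X}$; and since each $U_{i}$ is $(-1)$-truncated in $\cc{Y}$, \autoref{lem:lexrespectstrunc} shows that each $f^{\ast}U_{i}$ is $(-1)$-truncated in $\cc{X}$. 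Hence every finite product $W_{i_{0}\cdots i_{n}} := f^{\ast}U_{i_{0}} \times \cdots \times f^{\ast}U_{i_{n}}$ is again $(-1)$-truncated in $\cc{X}$, and, regarded over $f^{\ast}U_{i_{0}}$ via the first projection, is $(-1)$-truncated in $\cc{X}_{/f^{\ast}U_{i_{0}}}$ as well — that projection is the base change along $f^{\ast}U_{i_{0}} \to \unit$ of the monomorphism $f^{\ast}U_{i_{1}} \times \cdots \times f^{\ast}U_{i_{n}} \to \unit$, hence itself a monomorphism. Consequently $\cc{X}_{/W_{i_{0}\cdots i_{n}}} \simeq (\cc{X}_{/f^{\ast}U_{i_{0}}})_{/W_{i_{0}\cdots i_{n}}}$ is an open subtopos of the $0$-localic topos $\cc{X}_{/f^{\ast}U_{i_{0}}}$, hence itself $0$-localic (an open subtopos $\Shv(F)_{/u} \simeq \Shv(F_{/u})$ of a $0$-localic topos is $0$-localic).

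Next I would apply descent for $\infty$-topoi (\cite[\S 6.1.3]{lurieHigherToposTheory2009}): the \v{C}ech nerve of $V \twoheadrightarrow \unit$ exhibits $\cc{X}$ as the limit in $\RTop$ of the cosimplicial $\infty$-topos $[n] \mapsto \cc{X}_{/V^{\times(n+1)}}$. As slicing sends coproducts to products and $V^{\times(n+1)} \simeq \coprod_{(i_{0},\dots,i_{n}) \in I^{n+1}} W_{i_{0}\cdots i_{n}}$, one gets $\cc{X}_{/V^{\times(n+1)}} \simeq \prod_{(i_{0},\dots,i_{n}) \in I^{n+1}} \cc{X}_{/W_{i_{0}\cdots i_{n}}}$, a small product of $0$-localic topoi and so $0$-localic. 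Since the $0$-localic topoi are closed under small limits in $\RTop$ — the fully faithful inclusion $\Shv(-)\colon \mm{Loc} \hookrightarrow \RTop$ of \autoref{thm:zerolocalic} admits the left adjoint $\tau_{\leq -1}$, hence is reflective, and reflective subcategories are closed under limits — the limit $\cc{X} \simeq \varprojlim_{[n]} \cc{X}_{/V^{\times(n+1)}}$ is $0$-localic, which is the claim.

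The argument is essentially formal once these identifications are set up; the one place I expect to need care is the bookkeeping of the first paragraph, i.e.\ checking via left-exactness of $f^{\ast}$ and of the \'etale pullback functors (\autoref{lem:lexrespectstrunc}) that \emph{every} slice appearing in the \v{C}ech nerve is a genuine \emph{open} subtopos of one of the given $0$-localic slices $\cc{X}_{/f^{\ast}U_{i}}$, so that closure of $0$-localicity under limits in $\RTop$ may legitimately be applied. (Hypothesis (1) is not actually used in this deduction, though it holds automatically in the intended application, \autoref{prop:relativespecislocalic}.)
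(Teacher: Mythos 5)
Your first paragraph is fine and overlaps with ingredients of the paper's argument: left-exactness of $f^{\ast}$ makes each $f^{\ast}U_{i}$ a $(-1)$-truncated object of $\cc{X}$, and the identification of each \v{C}ech slice $\cc{X}_{/W_{i_{0}\cdots i_{n}}}$ as an open subtopos of the $0$-localic $\cc{X}_{/f^{\ast}U_{i_{0}}}$ is correct. The gap is in the final step, and it is a variance error that cannot be fixed by bookkeeping. Descent (\cite[Theorem 6.1.3.9]{lurieHigherToposTheory2009}) exhibits $\cc{X}$ as the limit of the diagram $[n]\mapsto\cc{X}_{/V^{\times(n+1)}}$ taken along the \emph{pullback} functors, i.e.\ as a limit of underlying $\infty$-categories and hence a limit in $\LTop$ --- equivalently a \emph{colimit} in $\RTop$ (compare \autoref{prop:omnibusetale} and \autoref{ex:relativespecstructuresheaf}, where precisely these gluing diagrams are treated as limits in $\LTop(\cc{G})_{\et}$). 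The closure property you invoke --- reflectivity of the $0$-localic topoi in $\RTop$, hence closure under limits \emph{in} $\RTop$ --- concerns the other universal property (products of locales, fiber products of topoi) and says nothing about these gluing colimits. Indeed, $0$-localic topoi are \emph{not} closed under geometric realizations along \'etale maps: for a nontrivial discrete group $G$, the object $G\in\Fun(BG,\An)$ gives an effective epimorphism $G\twoheadrightarrow\unit$ whose \v{C}ech slices $\Fun(BG,\An)_{/G^{\times(n+1)}}\simeq\Fun(G^{\times n},\An)$ are all $0$-localic, while $\Fun(BG,\An)$ is not. So the general principle your last sentence rests on is false; the $(-1)$-truncatedness of the covering objects must enter the gluing step itself, not merely the identification of the individual pieces.

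This is exactly how the paper proceeds: using that $U_{i}$ is $(-1)$-truncated it identifies $(\tau_{\leq -1}\cc{X})_{/U_{i}}$ with $\tau_{\leq -1}(\cc{X}_{/U_{i}})$, deduces that the $0$-localic reflection $\eta_{\ast}\colon\cc{X}\to\Shv(\tau_{\leq -1}\cc{X})$ restricts over each $U_{i}$ to the $0$-localic reflection of $\cc{X}_{/U_{i}}$ --- an equivalence by hypothesis --- and then applies descent to the \emph{morphism} $\eta_{\ast}$, which is an equivalence because it becomes one after base change along an effective epimorphism. If you wish to keep your \v{C}ech formulation, you would need to show that $\eta$ induces a levelwise equivalence between the \v{C}ech diagram of $\coprod_{I}f^{\ast}U_{i}\twoheadrightarrow\unit$ in $\cc{X}$ and that of $\coprod_{I}U_{i}\twoheadrightarrow\unit$ in $\Shv(\tau_{\leq -1}\cc{X})$, and then compare colimits; it is in that comparison that the truncatedness does its real work.
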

\begin{proof}
  Passing to associated left adjoints supplies a factorization of $f^{\ast}: \cc{Y} \to \cc{X}$ through a left-exact left adjoint $\cc{Y} \to \Shv(\tau_{\leq -1}\cc{X})$ using \autoref{thm:zerolocalic}. Using \autoref{lem:lexrespectstrunc} we may now reduce to the case where $\cc{Y} \simeq \Shv(\tau_{\leq -1}\cc{X})$ and $f_{\ast}$ is the reflection $\eta_{\ast}:\cc{X} \to \Shv(\tau_{\leq -1}\cc{X})$. In this case, we are given $\{U_{i}\}_{i \in I} \subset \tau_{\leq -1}\Shv(\tau_{\leq -1}\cc{X}) \simeq \tau_{\leq -1}\cc{X}$ along with an effective epimorphism $\coprod_{I}U_{i} \twoheadrightarrow \unit$ in $\cc{X}$. Define the morphisms $f^{i}_{\ast} \in \RTop^{[1]}$ as in the following Cartesian square
  \begin{equation}\label{eq:localicdevissage}\xymatrix{
      \cc{X}_{/U_{i}} \ar[r] \ar[d]_{f_{\ast}^{i}} & \cc{X} \ar[d]^{\eta_{\ast}} \\
      \Shv(\tau_{\leq -1}\cc{X})_{/U_{i}} \ar[r] & \Shv(\tau_{\leq -1} \cc{X}) \\
    }
  \end{equation}
  noting that the object $f^{i, \ast}U_{i} \in \Shv(\tau_{-1}\cc{X})$ may be identified with $U_{i}$. 
  
  We first claim that the left adjoint to the top horizontal map in \eqref{eq:localicdevissage}, given by \[U_{i} \times - : \cc{X} \to \cc{X}_{/U_{i}},\] induces an equivalence $(\tau_{-1}\cc{X})_{/U_{i}} \simeq \tau_{-1}(\cc{X}_{/U_{i}})$. Since $U_{i}$ is $(-1)$-truncated, we have that $x \times_{U_{i}} y \simeq x \times y$ for any pair of maps $x \to U_{i}$, $y\to U_{i}$ in $\cc{X}^{[1]}$, owing to the following Cartesian diagram 
\[
\xymatrix{
x \times_{U_{i}} x \ar[r] \ar[d] & x \times x \ar[d]\\
U_{i} \ar@{=}[r]& U_{i} \times U_{i}.
}
\]
We find that every $x \to U_{i} \in \cc{X}^{[1]}$ satisfies $x \simeq x \times_{U_{i}} U_{i} \simeq x \times U_{i}$, and that any $x \to U_{i}$ is $(-1)$-truncated if and only if $x \in \tau_{-1}\cc{X}$ from which the claim follows. 

The previous claim furnishes an equivalence $\Shv(\tau_{\leq -1} (\cc{X}_{/U_{i}})) \to \Shv(\tau_{\leq -1}\cc{X})_{/ U_{i}}$ which moreover identifies $f^{i}_{\ast}$ with the 0-localic reflection $\cc{X}_{/U_{i}} \to \Shv(\tau_{\leq -1}(\cc{X}_{/U_{i}}))$ (by checking that this induces the identity upon application of $\tau_{\leq -1}$). By assumption on $U_{i}$, we have that $f^{i}_{\ast}$ is an equivalence for every $i \in I$, whence we have that $\eta_{\ast}: \cc{X} \to \Shv(\tau_{\leq -1}\cc{X})$ is an equivalence upon pulling back along $\Shv(\tau_{\leq -1}\cc{X})_{/\coprod_{I} U_{i}} \to \Shv(\tau_{\leq -1}\cc{X})$. Applying descent \cite[Theorem 6.1.3.9]{lurieHigherToposTheory2009}, we conclude.
\end{proof}

Before we begin the proof of \autoref{prop:relativespecislocalic}, we will also require the following key fact.

\begin{recollection}\label{ex:relativeaffinespectra}
Given a transformation of geometries $\cc{G} \to \cc{G}'$, \cite[Proposition 2.3.18.(2)]{lurieDerivedAlgebraicGeometryV} supplies an identification $\Spec^{\cc{G}'}_{\cc{G}}\Spec^{\cc{G}} \simeq \Spec^{\cc{G}'}$ of functors $\Ind(\cc{G}^{\op}) \to \mm{LTop}(\cc{G}')$.
\end{recollection}

\begin{proof}[Proof of \autoref{prop:relativespecislocalic}] By construction, we are supplied with a counit map $f_{\ast} : \Spec^{\cc{G}'}_{\cc{G}}X \to X$ in $\RTop^{\loc}_{\CAlg}$. \cite[Theorem 2.40]{lurieDerivedAlgebraicGeometry2011a} implies that we may find an effective epimorphism $\coprod_{i \in I} U_{i} \twoheadrightarrow X$ where $U_{i} \in \tau_{\leq -1}\Shv(X)$ and $(\Shv(X)_{/U_{i}}, \cc{O}_{X}|_{U_{i}}) \simeq \Spec A_{i}$ for $A_{i}\in \CAlg$; indeed, we may select any cover of the underlying classical scheme by affine opens. \hyperref[prop:omnibusetale3]{\autoref*{prop:omnibusetale}.(3)} now implies that \[(\Spec^{\cc{G}'}_{\cc{G}}X)_{/f^{\ast}U_{i}} \simeq \Spec^{\cc{G}'}_{\cc{G}}\Spec A_{i} \simeq \Spec \Perf_{A_{i}}\] the last of which is 0-localic by \autoref{ex:zariskispectralocalic}. We are now in the setting of \autoref{lem:localicdevissage}, and we may conclude.
\end{proof}

%%% Local Variables:
%%% mode: LaTeX
%%% TeX-master: "../main"
%%% End:

\section{An Affineness Criterion for 2-Schemes}\label{sec:affineness}

This section is dedicated to the proof of \autoref{thmalph:affineness}, recorded as \autoref{thm:affineness} below. Let us first define our basic objects.

\begin{definition}
    An \tdef{affine $2$-scheme} is an object $(\cc{X}, \cc{O_{X}}) \in \RTop^{\loc}_{\twoCAlg}$ which is equivalent to $\Spec \cc{K}$ for some $\cc{K} \in \twoCAlg$. We write $\mdef{\twoAff} \subseteq \RTop^{\loc}_{\twoCAlg}$ to denote the full subcategory of affine $2$-schemes.  
\end{definition}

\begin{definition}\label{def:twoscheme}
  A \tdef{2-scheme} is an object $(\cc{X}, \cc{O}_{\cc{X}}) \in \RTop^{\loc}_{\twoCAlg}$ satisfying the following conditions:
  \begin{enumerate}
  \item The underlying $\infty$-topos $\cc{X}$ is 0-localic.
  \item There is an effective epimorphism $\{\coprod U_{i} \twoheadrightarrow \unit\}$ in $\cc{X}$ such that:
  \begin{enumerate}
      \item[(a)] Each $U_{i} \in \tau_{\leq -1} \cc{X}$
      \item[(b)] For every $i$ there exists $\cc{K}_{i}\in \twoCAlg$ and an equivalence $(\cc{X}_{/U_{i}}, \cc{O_{X}}|_{U_{i}}) \simeq \Spec \cc{K}_{i}$ in $\mm{LTop}(\GZarst)$.
  \end{enumerate}
  \end{enumerate}
 We write $\mdef{\twoSch} \subset \RTop^{\loc}_{\twoCAlg}$ to denote the full subcategory of 2-schemes.
\end{definition}

The truncatedness restriction on $U_{i}$ in part (a) of condition (2) above is meant to ensure that the covering $2$-affine neighborhoods of $\cc{X}$ are actually associated to subframes of $\tau_{\leq -1} \cc{X}$. This at first appears to distinguish it from \autoref{def:spectralschemes}. However, condition (2) turns out to be redundant, and we refer the reader to \autoref{rem:whytruncated} at the end of this section for further discussion of the same.

\begin{definition}
     We say a 2-scheme is \tdef{quasicompact and quasiseparated (qcqs)} if the underlying 0-localic $\infty$-topos $\cc{X}$ is coherent in the sense of \cite[\S 3]{lurieDerivedAlgebraicGeometry2011a}.
\end{definition}

\begin{example}\label{ex:balmerspectrumiscoherent}
  By the identification of \autoref{thm:zariskiisbalmer} and the results of \cite[\S 2]{balmerSpectrumPrimeIdeals2005}, one has that $\Spec \cc{K} \in \twoSch_{\qcqs}$ for every $\cc{K} \in \twoCAlg$, and hence $\twoAff \subseteq \twoSch_{\mm{qcqs}}$.
\end{example}

\begin{definition}
     We say a $2$-scheme $(\cc{X}, \cc{O}_{\cc{X}})$ is \tdef{rigid} if there exists an effective epimorphism $\{\coprod U_{i} \twoheadrightarrow \unit\}$ in $\cc{X}$ such that for every $i$:
     \begin{enumerate}
         \item $U_{i} \in \tau_{\leq -1} \cc{X}$
         \item There exists $\cc{K}_{i} \in \twoCAlg_{\rig}$ and an equivalence $(\cc{X}_{/U_{i}}, \cc{O_{X}}|_{U_{i}}) \simeq \Spec \cc{K}_{i}$ in $\mm{LTop}(\GZarst)$.
     \end{enumerate}
\end{definition}

\begin{remark}
    As one might hope, it is possible to show that the result of \autoref{thm:affineness} implies that $(\cc{X}, \cc{O_X})$ is a rigid $2$-scheme if and only if \emph{any} \'etale map \[\Spec \cc{K} \to (\cc{X}, \cc{O_X}) \in (\RTop^{\loc}_{\CAlg})^{[1]}\] associated to $U \in \tau_{\leq -1} \cc{X}$ factors through a map $f:\Spec \cc{K} \to \Spec \cc{K}'$ where $\cc{K}'$ is rigid and $f$ is an equivalence of locally $2$-ringed topoi. Thus, the property of an affine neighborhood to be equivalent to the spectrum of a rigid $2$-ring satisfies the ``affine commmunication lemma''. 
\end{remark}

The main result of this section is the following.

\begin{theorem}\label{thm:affineness}
  A rigid 2-scheme $(\cc{X}, \cc{O}_{\cc{X}})$ is affine if and only if it is qcqs. 
\end{theorem}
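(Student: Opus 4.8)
The plan is to leverage the fully faithfulness of $\Spec$ on rigid $2$-rings (\autoref{thm:zariski2subcanonical}) and the descent-theoretic machinery of \autoref{sec:relativeschemes} to reconstruct the would-be affine model directly from global sections. One direction is free: if $(\cc{X}, \cc{O}_{\cc{X}}) \simeq \Spec \cc{K}$, then \autoref{ex:balmerspectrumiscoherent} shows it is qcqs. For the converse, let $(\cc{X}, \cc{O}_{\cc{X}})$ be a rigid qcqs $2$-scheme and set $\cc{K} := \Gamma(\cc{X}, \cc{O}_{\cc{X}}) \in \twoCAlg$. The first point to establish is that $\cc{K}$ is itself rigid: since rigidity of a $2$-scheme gives a cover by $\Spec \cc{K}_{i}$ with $\cc{K}_{i}$ rigid, and $\twoCAlg_{\rig} \subseteq \twoCAlg$ is closed under limits (\autoref{ex:rigidity}), coherence lets us express $\cc{K}$ as a finite limit of the (rigid) sections over the affine pieces and their intersections — here I would need that finite intersections of the $\Spec \cc{K}_{i}$ are again spectra of rigid $2$-rings, which follows from the rigid Karoubi quotient description in \autoref{thm:zariskiisbalmer} together with the ``affine communication'' remark, or more directly from the fact that restriction of $\cc{O}_{\cc{X}}$ to a quasicompact open of an affine rigid piece is a Karoubi quotient $\cc{K}_{i}/\langle a\rangle$, still rigid.

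Having produced a rigid $2$-ring $\cc{K}$ with $\Gamma \cc{K} \simeq \Gamma(\cc{X}, \cc{O}_{\cc{X}})$, the counit of the adjunction $\Spec \dashv \Gamma$ (\autoref{thm:absolutespectra}) supplies a comparison morphism $\epsilon: \Spec \cc{K} \to (\cc{X}, \cc{O}_{\cc{X}})$ in $\RTop^{\loc}_{\twoCAlg}$. I claim $\epsilon$ is an equivalence, and the strategy is to check this locally on a cover. The key descent input is \autoref{ex:relativespecstructuresheaf}: the assignment sending a structured topos to the global sections of its relative (here: absolute) spectrum is a sheaf for the canonical topology; applied to $\Spec$ on $2$-rings this is precisely the statement that $\cc{O}_{\cc{X}}$ is recovered as the limit of $\cc{K}_{i}/\langle a \rangle$ over the covering data — in other words $\cc{K} = \Gamma(\cc{X}, \cc{O}_{\cc{X}})$ already knows all the local rings. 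Concretely, I would fix the cover $\{U_{i}\}$ by rigid affine opens; the basic quasicompact opens $U(a_{ij}) \subseteq \Spc \cc{K}_{i}$ refining the intersections $U_i \cap U_j$ assemble, by coherence, into a finite cover of $|\!\Spec \cc{K}| = \Spc \cc{K}$, and one shows $\epsilon$ restricts over each such basic open to the identity map $\cc{K}/\langle a\rangle \to \cc{K}_i / \langle a\rangle$ coming from an equality of global sections. Since a morphism of $0$-localic topoi that is an equivalence over an effective-epi cover is an equivalence (descent, \cite[Theorem 6.1.3.9]{lurieHigherToposTheory2009}), and the structure sheaves then agree by the sheaf property, $\epsilon$ is an equivalence.

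The main obstacle I anticipate is the bookkeeping in the previous paragraph: namely, showing that the Balmer spectra $\Spc \cc{K}_{i}$ of the local pieces glue along $\Spc \cc{K}$ \emph{compatibly with the comparison map}, i.e. that $\epsilon^{\ast}$ sends the basic open $U(a) \subseteq \Spc \cc{K}$ corresponding to $a \in \cc{K}$ to the expected open of $\cc{X}$ and induces on sections the canonical map $\cc{K}/\langle a\rangle \to \cc{O}_{\cc{X}}(\text{that open})$. This requires unwinding the construction of the absolute spectrum functor (via $\Spec^{\GZarst}_{(\GZarst)^{\disc}}$) closely enough to identify the unit/counit transformations on the nose, and checking that the coherence (= quasicompactness and quasiseparatedness) hypothesis is exactly what guarantees the refining cover of $\Spc \cc{K}$ by quasicompact opens drawn from the $\Spc \cc{K}_i$ is \emph{finite} — without qcqs, the limit computing $\cc{K}$ need not commute with the relevant constructions and $\Gamma$ loses information. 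A secondary subtlety is verifying $\cc{K}$ is compact (so that $\Spec \cc{K} = \Spec^{\GZarst}\cc{K}$ is literally of the form used throughout), which again should follow from coherence together with the fact that the $\cc{K}_i$ may be taken compact; alternatively one observes the construction only uses $\cc{K} \in \twoCAlg$ via $\Ind((\GZarst)^{\op})$ and no compactness is needed.
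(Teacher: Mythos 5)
Your overall shape is right: the ``only if'' direction via \autoref{ex:balmerspectrumiscoherent}, the rigidity of $\cc{K} := \Gamma(\cc{X},\cc{O}_{\cc{X}})$ via closure of $\twoCAlg_{\rig}$ under limits, and the plan to show the canonical comparison between $(\cc{X},\cc{O}_{\cc{X}})$ and $\Spec\cc{K}$ is an equivalence by checking on a cover. (Minor point: the canonical map runs $(\cc{X},\cc{O}_{\cc{X}}) \to \Spec\Gamma(\cc{X},\cc{O}_{\cc{X}})$, the unit of $\Gamma \dashv \Spec$ on locally $2$-ringed topoi, not the other way.) But the step you defer as ``bookkeeping'' --- that the comparison restricts over basic opens to identities $\cc{K}/\langle a\rangle \to \cc{K}_{i}/\langle a\rangle$, and that the opens $U(a_{ij}) \subseteq \Spc\cc{K}_{i}$ ``assemble into a finite cover of $\Spc\cc{K}$'' --- is the entire content of the theorem, and nothing in your proposal supplies it. For opens of $\Spc\cc{K}_{i}$ to be opens of $\Spc\cc{K}$ at all, you need the restriction $\cc{K} \to \cc{K}_{i}$ to be a Karoubi quotient, and you need the kernels of these quotients to consist jointly of tensor-nilpotents so that the images cover. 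Neither follows from the sheaf property alone: knowing that $\cc{K}$ is the finite limit of the local sections does not tell you that $\cc{K}$ maps onto each $\cc{K}_{i}$ as a localization. The classical analogue fails badly --- $\Gamma(\bb{P}^{1}_{k},\cc{O}) = k$ and the restriction $k \to k[t]$ is not a localization --- so a genuinely $2$-categorical input is needed to rule this failure out, and your argument as written would ``prove'' that $\bb{P}^{1}$ is affine.

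That input is the paper's \autoref{cor:verdierbasechange} (via \autoref{lem:orientedbasechange}): for a cospan $\cc{K}_{1} \to \cc{K}_{12} \leftarrow \cc{K}_{2}$ in $\twoCAlg$ with one leg a Karoubi quotient, the projection $\cc{K}_{1} \times_{\cc{K}_{12}} \cc{K}_{2} \to \cc{K}_{1}$ is again a Karoubi quotient and the square is simultaneously Cartesian and co-Cartesian. Its proof is not formal: full faithfulness uses the Verdier-quotient mapping-space formula, and essential surjectivity up to retracts uses the Neeman--Thomason $K$-theory extension theorem. The paper then inducts on the number of affines in a cover, gluing two at a time: writing $(\cc{X},\cc{O}_{\cc{X}})$ as a pushout of $\Spec\cc{L} \leftarrow \Spec\cc{K}_{n}/\langle a\rangle \to \Spec\cc{K}_{n}$ and applying the base-change corollary to global sections exhibits $\Spec\cc{L}$ and $\Spec\cc{K}_{n}$ as a covering pair of quasicompact opens of $\Spec\Gamma(\cc{X},\cc{O}_{\cc{X}})$; comparing the two pushout squares, which agree on three vertices, then forces the affinization map to be an equivalence. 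Coherence enters exactly where you guessed (finiteness of the cover, and quasicompactness of the pairwise intersections so that they are cut out by single elements via \autoref{thm:zariskiisbalmer}), but the heart of the argument is the base-change statement, which your proposal neither states nor replaces.
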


The ``only if'' direction is just \autoref{ex:balmerspectrumiscoherent}. For the reverse direction, we will need to collect some preliminaries on the behaviour of Karoubi quotients in $\Catperf$. We are primarily after \autoref{cor:verdierbasechange}, which is essentially a symmetric monoidal and idempotent-complete version of \cite[Proposition A.1.18]{calmesHermitianKtheoryStable2025}. In the interest of self-containment we have chosen to include a direct proof that does not assume their result. We will first need the following useful lemma, a weak version of the ``second isomorphism theorem'' for stable $\infty$-categories.

\begin{lemma}\label{lem:secondiso}
  Let $\cc{E} \subseteq \cc{C} \subseteq \cc{D}$ be a sequence of inclusions in $\Catperf$. Then the induced functor $\cc{C}/\cc{E} \to \cc{D}/\cc{E}$ is fully faithful.
\end{lemma}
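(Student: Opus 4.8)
The plan is to use the universal property of Karoubi quotients directly, exhibiting $\cc{C}/\cc{E}$ as a full subcategory of $\cc{D}/\cc{E}$ by producing the expected left adjoint/retraction on the level of the ambient categories. First I would recall that for a stable subcategory $\cc{A}\subseteq\cc{B}$ in $\Catperf$, the Karoubi quotient $\cc{B}/\cc{A}$ can be computed as the idempotent completion of the Verdier quotient $\cc{B}/\cc{A}$ in $\Catex$, and that the localization functor $\cc{B}\to\cc{B}/\cc{A}$ is a Bousfield localization after passing to Ind-completions: $\Ind(\cc{B}/\cc{A})\simeq\Ind(\cc{B})/\Ind(\cc{A})$ sits in a recollement, with fully faithful right adjoint identifying $\Ind(\cc{B}/\cc{A})$ with the $\cc{A}$-local objects of $\Ind(\cc{B})$. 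This reduces the statement to a computation of mapping spaces: I must show that for $x,y\in\cc{C}$, the natural map $\map_{\cc{C}/\cc{E}}(x,y)\to\map_{\cc{D}/\cc{E}}(x,y)$ is an equivalence.

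The key step is the mapping-space formula. Passing to Ind-completions, $\map_{\cc{C}/\cc{E}}(x,y)\simeq\map_{\Ind(\cc{C})}(x, L_{\cc{E}}^{\cc{C}}y)$ where $L_{\cc{E}}^{\cc{C}}$ is the localization of $\Ind(\cc{C})$ killing $\Ind(\cc{E})$, and similarly for $\cc{D}$. The crucial observation is that the inclusion $\Ind(\cc{C})\hookrightarrow\Ind(\cc{D})$ is fully faithful and colimit-preserving, and it carries $\Ind(\cc{E})$ to $\Ind(\cc{E})$; moreover the localization $L_{\cc{E}}^{\cc{D}}$ restricted to $\Ind(\cc{C})$ agrees with $L_{\cc{E}}^{\cc{C}}$, because the $\cc{E}$-local objects of $\Ind(\cc{C})$ are precisely the objects that become $\cc{E}$-local in $\Ind(\cc{D})$ — both are characterized by $\map(e,-)\simeq 0$ for all $e\in\cc{E}$, and this condition is detected on $\map_{\Ind(\cc{C})}$ since $\cc{E}\subseteq\cc{C}$. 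Hence the cofiber sequence $E\to y\to L_{\cc{E}}^{\cc{C}}y$ (with $E$ an appropriate colimit of objects of $\cc{E}$) computed in $\Ind(\cc{C})$ maps to the corresponding sequence in $\Ind(\cc{D})$, and fully faithfulness of $\Ind(\cc{C})\hookrightarrow\Ind(\cc{D})$ gives $\map_{\Ind(\cc{C})}(x,L_{\cc{E}}^{\cc{C}}y)\simeq\map_{\Ind(\cc{D})}(x,L_{\cc{E}}^{\cc{D}}y)$. Finally one checks that the image of $\cc{C}/\cc{E}$ in $\cc{D}/\cc{E}$ is closed under retracts (it is: retracts of objects of $\cc{C}$ that survive to $\cc{D}/\cc{E}$ are detected already in $\cc{C}/\cc{E}$ since $\cc{C}/\cc{E}$ is idempotent complete), so the fully faithful functor between idempotent completions is induced correctly.

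The main obstacle I anticipate is the bookkeeping around idempotent completion: the Verdier quotient $\cc{C}/\cc{E}$ in $\Catex$ need not be idempotent complete even when $\cc{C},\cc{E}$ are, so one must be careful that the map $\cc{C}/\cc{E}\to\cc{D}/\cc{E}$ of \emph{Karoubi} quotients is still fully faithful — fully faithfulness is preserved under idempotent completion, so it suffices to prove it for the Verdier quotients, but one should state this reduction cleanly. A secondary subtlety is verifying that $L_{\cc{E}}^{\cc{D}}$ genuinely restricts to $\Ind(\cc{C})$, i.e.\ that $L_{\cc{E}}^{\cc{D}}y\in\Ind(\cc{C})$ for $y\in\cc{C}$; this follows because the cofiber of $y$ against its $\cc{E}$-colocalization lies in $\Ind(\cc{C})$, the colocalization being built from $\cc{E}\subseteq\cc{C}$. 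Once these two points are pinned down, the argument is the short diagram chase sketched above.
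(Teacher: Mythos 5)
Your argument is correct, and it reaches the same key point as the paper by a parallel but differently packaged route. The paper stays at the level of small categories and invokes the Nikolaus--Scholze formula expressing $\map_{\cc{C}/\cc{E}}(x_{1},x_{2})$ as a filtered colimit over $Z\in\cc{E}\downarrow_{\cc{C}}x_{2}$ of $\map_{\cc{C}}(x_{1},\cofib(Z\to x_{2}))$; full faithfulness then follows because the index categories $\cc{E}\downarrow_{\cc{C}}x_{2}\to\cc{E}\downarrow_{\cc{D}}x_{2}$ agree and each term is computed in the full subcategory $\cc{C}\subseteq\cc{D}$. You instead pass to Ind-completions and use the recollement $\Ind(\cc{B}/\cc{A})\simeq\Ind(\cc{B})/\Ind(\cc{A})$, reducing to the observation that the $\cc{E}$-localization of an object of $\Ind(\cc{C})$ is the same whether formed in $\Ind(\cc{C})$ or $\Ind(\cc{D})$, since locality is tested against $\cc{E}\subseteq\cc{C}$ and the inclusion $\Ind(\cc{C})\hookrightarrow\Ind(\cc{D})$ is fully faithful and colimit-preserving. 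These are really the same computation: the filtered colimit $\varinjlim_{Z}\cofib(Z\to x_{2})$ in the paper's formula \emph{is} the Ind-level localization $L_{\cc{E}}x_{2}$, so your ``crucial observation'' about local objects is precisely the paper's identification of comma categories. Your route costs slightly more infrastructure (the Neeman--Thomason identification of compact objects in the quotient, the existence of the recollement) but is equally rigorous, and your reduction from Karoubi to Verdier quotients via invariance of full faithfulness under idempotent completion is handled correctly. The only superfluous step is the remark about retract-closure of the image: that concerns the essential image rather than full faithfulness and is not needed for the statement of the lemma.
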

\begin{proof}
  Given $x_{1}, x_{2} \in \cc{C}$, \cite[Theorem I.3.3]{nikolausTopologicalCyclicHomology2018a} implies that the map $\map_{\cc{C}/\cc{E}}(e(x_{1}), e(x_{2})) \to \map_{\cc{D}/\cc{E}}(f(x_{1}), f(x_{2}))$ may be identified with the natural map \[\varprojlim_{Z \in \cc{E}\downarrow_{\cc{C}}x_{2}}\map_{\cc{C}}(x_{1}, \cofib(Z \to x_{2})) \to \varprojlim_{Z \in \cc{E}\downarrow_{\cc{D}}x_{2}}\map_{\cc{C}}(x_{1}, \cofib(Z \to x_{2}))\] where $e:\cc{C} \to \cc{C}/\cc{E}$ and $f:\cc{D} \to \cc{D}/\cc{E}$ are the associated Karoubi quotients. However, since $\cc{E} \subseteq \cc{C} \subseteq \cc{D}$ are inclusions of full subcategories, the induced functor $\cc{E}\downarrow_{\cc{C}}x_{2} \to \cc{E}\downarrow_{\cc{D}}x_{2}$ is an equivalence, and for every $Z \in \cc{E} \downarrow_{\cc{D}}x_{2}$ the following map is an equivalence \[\map_{\cc{C}}(x_{1}, \cofib(Z \to x_{2})) \simarrow \map_{\cc{D}}(x_{1}, \cofib(Z\to x_{2})).\] The claim follows. 
\end{proof}

\begin{recollection}\label{rec:vectimes}
  We recall the ``oriented fiber product'' construction of \cite{landKtheoryPullbacks2019}. Given a diagram $\cc{A} \xrightarrow{p} \cc{C} \xleftarrow{q} \cc{B}$ in $\Catperf$, the oriented fiber product $\cc{A} \vectimes_{\cc{C}}\cc{B}$ is defined by the following pullback diagram 
  \[ \xymatrix{
      \cc{A} \vectimes_{\cc{C}}\cc{B} \ar[r] \ar[d] & \cc{C}^{[1]} \ar[d]^{s \times t} \\
      \cc{A} \times \cc{B} \ar[r]^{p \times q} & \cc{C}\times \cc{C}
    }
  \]
  where $s, t$ send a morphism in $\cc{C}^{[1]}$ to its source and target respectively. Note that there is a fully faithful inclusion $\cc{A} \times_{\cc{C}} \cc{B} \hookrightarrow \cc{A} \vectimes_{\cc{C}} \cc{B}$, with essential image exactly those triples $(x, y, \alpha: p(x) \to q(y))$ such that $\alpha$ is an equivalence in $\cc{C}$, using the identification $\cc{C} \simeq \cc{C}^{[1], \mm{eq}}$, where the latter refers to the full subcategory on the equivalences, and the fact that the following is a pullback diagram
  \[\xymatrix{
      \cc{A} \times_{\cc{C}} \cc{B} \ar[r] \ar[d] & \cc{C} \simeq \cc{C}^{[1], \mm{eq}} \ar[d] \\
      \cc{A} \vectimes_{\cc{C}}\cc{B} \ar[r] & \cc{C}^{[1]} \\
    }
    \]
\end{recollection}

For the next lemma, let $\cc{K}_{1} \xrightarrow{p} \cc{K}_{12} \xleftarrow{q}\cc{K}_{2}$ be a diagram in $\twoCAlg$ such that $q$ is identified with a Karoubi quotient of $\cc{K}_{2}$ away from $\cc{I}_{2} \subset \cc{K}_{2}$.

\begin{lemma}\label{lem:orientedbasechange}
  The induced functor $\cc{K}_{1}\vectimes_{\cc{K}_{12}}\cc{K}_{2} \to \cc{K}_{1}\vectimes_{\cc{K}_{12}}\cc{K}_{12}$ given by \[(x, y, \alpha: p(x) \to q(y)) \mapsto (x, q(y), \alpha: p(x) \to q(y))\] is a Karoubi quotient with kernel given by the thick subcategory $(0, \cc{I}_{2}, 0)$.
\end{lemma}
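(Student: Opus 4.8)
The plan is to abbreviate $\cc{S}:=\cc{K}_{1}\vectimes_{\cc{K}_{12}}\cc{K}_{2}$ and $\cc{T}:=\cc{K}_{1}\vectimes_{\cc{K}_{12}}\cc{K}_{12}$, to write $\Phi\colon\cc{S}\to\cc{T}$ for the functor in the statement and $\cc{J}:=(0,\cc{I}_{2},0)$, and to exploit that, $\cc{S}$ and $\cc{T}$ being limits in $\Catperf$ along exact functors (as in \autoref{rec:vectimes}), all finite (co)limits and mapping spaces in them are computed componentwise; in particular $\map_{\cc{A}\vectimes_{\cc{C}}\cc{B}}((a,b,\alpha),(a',b',\alpha'))\simeq\map_{\cc{A}}(a,a')\times_{\map_{\cc{C}}(pa,qb')}\map_{\cc{B}}(b,b')$ and cofibers are taken slotwise. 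From this I would record three elementary observations. First, $\ker\Phi=\cc{J}$, the assignment $y\mapsto(0,y,0)$ is a fully faithful exact embedding $\cc{K}_{2}\hookrightarrow\cc{S}$ carrying $\cc{I}_{2}$ onto $\cc{J}$, and $\cc{J}$ is thick. Second, the projection $(a,b,\alpha)\mapsto a$ admits the fully faithful right adjoint $R(a)=(a,0,0)$, realizing a recollement $\cc{K}_{2}\hookrightarrow\cc{S}\twoheadrightarrow\cc{K}_{1}$, and likewise $\cc{K}_{12}\hookrightarrow\cc{T}\twoheadrightarrow\cc{K}_{1}$, with $\Phi$ strictly compatible with these data ($\pi_{\cc{T}}\Phi=\pi_{\cc{S}}$, $\Phi R_{\cc{S}}=R_{\cc{T}}$, and $\Phi$ restricting to $q$ on the kernels). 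Third, a morphism $f=(u,v)$ of $\cc{S}$ has $\cofib(f)\in\cc{J}$ if and only if $u$ is invertible and $\cofib(v)\in\ker q=\cc{I}_{2}$, which is exactly the condition that $\Phi(f)$ be invertible in $\cc{T}$.

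It follows from the third observation that the morphisms inverted by $\Phi$ are precisely those with cofiber in $\cc{J}$, so $\Phi$ inverts the maps generating the Verdier localization of $\cc{S}$ at $\cc{J}$ and, $\cc{T}$ being idempotent complete, factors through the Karoubi quotient $\cc{S}/\cc{J}$; write $\bar\Phi\colon\cc{S}/\cc{J}\to\cc{T}$ for the induced functor. It then suffices to prove that $\bar\Phi$ is an equivalence. For essential surjectivity I would use that $q$ is a Karoubi quotient, so every object of $\cc{K}_{12}$ is a retract of some $q(y)$: given $(a,z,\gamma)\in\cc{T}$ and a section $z\rightleftarrows q(y)$, the object $(a,z,\gamma)$ is a retract of $\Phi(a,y,\beta)$, where $\beta\colon pa\to q(y)$ is the evident composite. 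Combined with idempotent completeness of $\cc{T}$, this reduces the claim to full faithfulness of $\bar\Phi$.

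For full faithfulness I would run the recollement. Given $a,a'\in\cc{S}$, the canonical fibre sequence $a'_{0}\to a'\to R_{\cc{S}}\pi_{\cc{S}}(a')$ with $a'_{0}\in\cc{K}_{2}$ is carried by $\Phi$ to the corresponding sequence for $\Phi a'$ in $\cc{T}$ and by $\cc{S}\to\cc{S}/\cc{J}$ to a fibre sequence there, so by the associated long exact sequences it is enough to treat the two pieces as targets. For the piece $R_{\cc{S}}\pi_{\cc{S}}(a')=(\pi_{\cc{S}}a',0,0)$, a direct check shows $\map_{\cc{S}}((0,z,0),(\pi_{\cc{S}}a',0,0))$ is contractible for every $z\in\cc{I}_{2}$, so this object is local for $\cc{S}\to\cc{S}/\cc{J}$, and both mapping spaces collapse, via $\pi_{\cc{S}}\dashv R_{\cc{S}}$ and $\pi_{\cc{T}}\dashv R_{\cc{T}}$, to $\map_{\cc{K}_{1}}(\pi_{\cc{S}}a,\pi_{\cc{S}}a')$. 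For the piece $a'_{0}\in\cc{K}_{2}$, I would run the canonical fibre sequence for $a$ as well: since $\cofib(z\to a'_{0})\in\cc{K}_{2}$ for $z\in\cc{J}$ and $\map_{\cc{S}}(R_{\cc{S}}\pi_{\cc{S}}a,-)$ vanishes on $\cc{K}_{2}$, one gets $\map_{\cc{S}/\cc{J}}(R_{\cc{S}}\pi_{\cc{S}}a,a'_{0})=0$ (and similarly in $\cc{T}$), reducing to the case $a,a'\in\cc{K}_{2}$. There $\cc{J}$ is exactly $\cc{I}_{2}\subseteq\cc{K}_{2}\subseteq\cc{S}$, so \autoref{lem:secondiso} gives that $\cc{K}_{2}/\cc{I}_{2}\to\cc{S}/\cc{J}$ is fully faithful, whence $\map_{\cc{S}/\cc{J}}=\map_{\cc{K}_{2}/\cc{I}_{2}}=\map_{\cc{K}_{12}}(q(-),q(-))=\map_{\cc{T}}(\Phi(-),\Phi(-))$. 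Assembling, $\bar\Phi$ is fully faithful, hence an equivalence, so $\Phi$ is a Karoubi quotient with kernel $\cc{J}=(0,\cc{I}_{2},0)$.

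The main obstacle I anticipate is in the full-faithfulness step: one must check not merely that the relevant mapping spaces are abstractly equivalent but that the equivalences are realized by $\bar\Phi$, which is where the strict compatibilities of $\Phi$ with the two recollements do the work, together with naturality of the long exact sequence comparison. A more routine but genuine point is that $\cc{S}/\cc{J}$ is only idempotent complete after Karoubi completion, so essential surjectivity is only "up to retracts" and must be paired with idempotent completeness of $\cc{T}$. As an alternative to the recollement argument, one can compute $\map_{\cc{S}/\cc{J}}(x_{1},x_{2})$ directly from the Verdier mapping-space formula recalled in the proof of \autoref{lem:secondiso}: the indexing category $\cc{I}_{2}\downarrow y_{2}$ is filtered because $\cc{I}_{2}$ is stable, so the resulting filtered colimit commutes with the pullback defining $\map_{\cc{T}}$, and the colimit of its $\cc{K}_{2}$-slot recovers $\map_{\cc{K}_{12}}(qy_{1},qy_{2})$ by the same formula applied to $q$.
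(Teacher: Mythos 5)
Your overall architecture (factor $\Phi$ through $\cc{S}/\cc{J}$, then prove the induced functor is fully faithful and has retract-dense image) matches the paper's, and your essential-surjectivity step is actually \emph{simpler} than the paper's: exhibiting $(a,z,\gamma)$ as a retract of $\Phi(a,y,\beta)$ via a section $z \rightleftarrows q(y)$ avoids the appeal to the Neeman--Thomason $K$-theory extension theorem that the paper uses (via the $x\oplus\Sigma x$ trick). However, your full-faithfulness dévissage contains a genuine error: the semiorthogonality in $\cc{S}=\cc{K}_1\vectimes_{\cc{K}_{12}}\cc{K}_2$ runs in the \emph{opposite} direction to the one you use. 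By your own mapping-space formula,
\[
\map_{\cc{S}}\bigl((a,0,0),(0,c,0)\bigr)\;\simeq\;\map_{\cc{K}_1}(a,0)\times_{\map_{\cc{K}_{12}}(p(a),q(c))}\map_{\cc{K}_2}(0,c)\;\simeq\;\Omega\,\map_{\cc{K}_{12}}(p(a),q(c)),
\]
which is generally nonzero; it is $\map_{\cc{S}}\bigl((0,c,0),(a,0,0)\bigr)$ that vanishes. So the claim ``$\map_{\cc{S}}(R_{\cc{S}}\pi_{\cc{S}}a,-)$ vanishes on $\cc{K}_2$,'' which you use to discard the $\map(R_{\cc{S}}\pi_{\cc{S}}a,\,a'_0)$ term and reduce to the case $a,a'\in\cc{K}_2$, is false, and the reduction does not go through as stated.

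The step is repairable, but only by doing a piece of the direct computation you relegated to your ``alternative'': one checks that for $a'_0=(0,b',0)$ the comparison $\map_{\cc{S}/\cc{J}}\bigl((a,0,0),a'_0\bigr)\to\map_{\cc{T}}\bigl((a,0,0),(0,q(b'),0)\bigr)$ is an equivalence because the filtered colimit over $Z\in(\cc{I}_2)_{/b'}$ of $\Omega\,\map_{\cc{K}_{12}}\bigl(p(a),q(\cofib(Z\to b'))\bigr)$ is constant (as $q$ kills $\cc{I}_2$), with both sides equal to $\Omega\,\map_{\cc{K}_{12}}(p(a),q(b'))$ --- neither side is zero. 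Reordering the dévissage (decomposing the source first) does not evade this: the correct vanishing $\map_{\cc{S}}(\cc{K}_2,\cc{K}_1\text{-piece})=0$ lets you kill one cross term, but the $\cc{K}_1$-to-$\cc{K}_2$ mapping spaces must still be compared by hand. At that point you are essentially running the paper's argument, which computes $\map_{\cc{S}/\cc{J}}$ in one stroke from the Nikolaus--Scholze formula by commuting the filtered colimit past the defining Cartesian square of $\map_{\cc{T}}$; your closing ``alternative'' paragraph is in fact the paper's proof. Your remaining observations (identification of the kernel, slotwise (co)limits, $\cc{J}$-locality of $(a',0,0)$, and the use of \autoref{lem:secondiso} for the $\cc{K}_2$-to-$\cc{K}_2$ case) are all correct.
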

\begin{proof}
  Let us write $\cc{C}:= \cc{K}_{1} \vectimes_{\cc{K}_{12}}\cc{K}_{2}$, $\cc{C}' := \cc{K}_{1} \vectimes_{\cc{K}_{12}}\cc{K}_{12}$, and $\cc{D} := (0, \cc{I}_{2}, 0)$. The identification of the kernel of the map $\cc{C} \to \cc{C}'$ as in the lemma is clear, and one thus has a conservative functor $\cc{C}/\cc{D} \to \cc{C}'$ which we first claim is fully faithful.  By \cite[Theorem I.3.3]{nikolausTopologicalCyclicHomology2018a}, it is equivalent to show that given $(x_{1}, y_{1}, \alpha), (x_{2},y_{2},\beta) \in \cc{C}$ arbitrary, the natural map 
  \[
    \varinjlim_{Z\in \cc{D}_{/(x_{2},y_{2},\beta)}}\map_{\cc{C}}\Big((x_{1}, y_{1}, \alpha), \cofib(Z \to (x_{2},y_{2},\beta))\Big) \to \map_{\cc{C}'}((x_{1}, q(y_{1}), \alpha), (x_{2},q(y_{2}),\beta)) 
    \]
    is an equivalence. Using the pullback presentation of $\cc{C}'$, the latter mapping space may be expressed via the following Cartesian square
    \[\xymatrix{
        \map_{\cc{C}'}((x_{1}, q(y_{1}), \alpha), (x_{2},q(y_{2}),\beta)) \ar[r] \ar[d] & \map_{\cc{K}_{12}^{[1]}}(\alpha, \beta) \ar[d] \\
        \map_{\cc{K}_{1}}(x_{1},x_{2}) \times \map_{\cc{K}_{12}}(q(y_{1}),q(y_{2}))\ar[r] & \map_{\cc{K}_{12}}(p(x_{1}),p(x_{2})) \times \map_{\cc{K}_{12}}(q(y_{1}),q(y_{2}))
      }
    \]
    where we note that the following natural map is an equivalence \[\map_{\cc{K}_{1}}(x_{1},x_{2}) \times \varinjlim_{Z \in {\cc{I}_{2}}_{/y_{2}}}\map_{\cc{K}_{12}}(y_{1},\cofib(Z \to y_{2})) \to \map_{\cc{K}_{1}}(x_{1},x_{2}) \times \map_{\cc{K}_{12}}(q(y_{1}),q(y_{2}))\] since $\cc{K}_{2} \to \cc{K}_{12}$ identifies the target with the Karoubi quotient away from $\cc{I}_{2}$. Using the fact that the target map $t: \cc{C} \to \cc{K}_{2}$ is a colocalization with left adjoint given by $y \mapsto (0,y,0)$, we have that the target map induces an equivalence $\cc{D}_{/ (x_{1},y_{2},\beta)} \simeq {\cc{I}_{2}}_{/y_{2}}$. Altogether, we obtain a Cartesian diagram
    \[\xymatrix{
        \varinjlim_{Z\in \cc{D}_{/(x_{2},y_{2},\beta)}}\map_{\cc{C}}\Big((x_{1}, y_{1}, \alpha), \cofib(Z \to (x_{2},y_{2},\beta))\Big) \ar[d] \ar[r] & \map_{\cc{C}'}((x_{1}, q(y_{1}), \alpha), (x_{2},q(y_{2}),\beta)) \ar[d] \\
        \map_{\cc{K}_{1}}(x_{1},x_{2}) \times \varinjlim_{Z \in {\cc{I}_{2}}_{/y_{2}}}\map_{\cc{K}_{12}}(y_{1},\cofib(Z \to y_{2})) \ar[r] & \map_{\cc{K}_{1}}(x_{1},x_{2}) \times \map_{\cc{K}_{12}}(q(y_{1}),q(y_{2}))
        }
      \]
      by taking a filtered colimit of the associated Cartesian squares over $Z \in {\cc{I}_{2}}_{/y_{2}} \simeq \cc{D}_{(x_{2},y_{2},\beta)}$. Since the bottom horizontal arrow is an equivalence, we deduce the claimed fully faithfulness. 
      
      It remains to see that the map $\cc{C} \to \cc{C}'$ has a retract-dense essential image, or equivalently that $\cc{C}/\cc{D} \hookrightarrow \cc{C}'$ is surjective. For this, we note that given any arbitrary $(x,y, \alpha) \in \cc{C}'$, the object $(x \oplus \Sigma x, y \oplus \Sigma y, \alpha \oplus \Sigma \alpha)$ sits in the following cofiber sequence:
      \[
        (0, y \oplus \Sigma y, 0) \rightarrow (x \oplus \Sigma x, y \oplus \Sigma y, \alpha \oplus \Sigma \alpha) \rightarrow (x \oplus \Sigma x, 0, 0)
      \]
      where the third term lives in the essential image of the embedding $\cc{K}_{1} \hookrightarrow \cc{C}'$, and the first term lives in the essential image of the composite $\cc{K}_{2} \to \cc{K}_{2}/\cc{I}_{2} \simeq \cc{K}_{12} \hookrightarrow \cc{C}'$, using the K-theory extension theorem of Neeman-Thomason \cite[Corollary 0.9]{neemanConnectionTheoryLocalization1992}. Since both of these embeddings factor through the map $\cc{C} \to \cc{C}'$, it follows that the middle term of the above sequence is in the full subcategory $\cc{C}/\cc{D} \subseteq \cc{C}'$. It follows that $(x,y, \alpha)$ is the retract of an object in $\cc{C}/\cc{D}$, yielding the claim.
  \end{proof}

  \begin{corollary}\label{cor:verdierbasechange}
    Notation as in the previous lemma, one has that the natural map $\cc{K}_{1} \times_{\cc{K}_{12}}\cc{K}_{2} \to \cc{K}_{1}$ is a Karoubi quotient away from the ideal $(0,\cc{I}_{2}, 0)$. Moreover, the following diagram is co-Cartesian in $\twoCAlg$:
    \[\xymatrix{
        \cc{K}_{1} \times_{\cc{K}_{12}} \cc{K}_{2} \ar[r] \ar[d] & \cc{K}_{1} \ar[d] \\
        \cc{K}_{2} \ar[r] & \cc{K}_{12} \\
      }
      \]
    \end{corollary}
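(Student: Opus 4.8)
The plan is to bootstrap from the ``oriented'' statement of \autoref{lem:orientedbasechange}, using \autoref{lem:secondiso} to pass to full subcategories. Write $\cc{C} := \cc{K}_{1}\vectimes_{\cc{K}_{12}}\cc{K}_{2}$ and $\cc{C}' := \cc{K}_{1}\vectimes_{\cc{K}_{12}}\cc{K}_{12}$ as in that lemma, and put $\cc{D} := (0,\cc{I}_{2},0)$. By \autoref{rec:vectimes}, $\cc{K}_{1}\times_{\cc{K}_{12}}\cc{K}_{2}$ is the full subcategory of $\cc{C}$ on triples $(x,y,\alpha)$ whose structure map $\alpha\colon p(x)\to q(y)$ is an equivalence, and likewise $\cc{K}_{1}\simeq\cc{K}_{1}\times_{\cc{K}_{12}}\cc{K}_{12}$ is the full subcategory of $\cc{C}'$ on triples with invertible structure map (the strict pullback along $\mathrm{id}_{\cc{K}_{12}}$ being just $\cc{K}_{1}$). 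Note that $\cc{D}$ is a thick tensor ideal contained in $\cc{K}_{1}\times_{\cc{K}_{12}}\cc{K}_{2}$: any $(0,z,0)$ with $z\in\cc{I}_{2}=\ker q$ has $q(z)\simeq 0$, hence invertible structure map.

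For the first assertion, I would first note that a triple $(0,y,\alpha)$ with $\alpha$ invertible forces $q(y)\simeq 0$, hence $y\in\cc{I}_{2}$; so the kernel of the projection $\pi\colon\cc{K}_{1}\times_{\cc{K}_{12}}\cc{K}_{2}\to\cc{K}_{1}$ is exactly $\cc{D}$, and it remains to see that the induced functor $(\cc{K}_{1}\times_{\cc{K}_{12}}\cc{K}_{2})/\cc{D}\to\cc{K}_{1}$ is an equivalence. By \autoref{lem:secondiso} applied to $\cc{D}\subseteq\cc{K}_{1}\times_{\cc{K}_{12}}\cc{K}_{2}\subseteq\cc{C}$, the functor $(\cc{K}_{1}\times_{\cc{K}_{12}}\cc{K}_{2})/\cc{D}\to\cc{C}/\cc{D}$ is fully faithful; composing with $\cc{C}/\cc{D}\simeq\cc{C}'$ of \autoref{lem:orientedbasechange}, and using that the image $(x,q(y),\alpha)$ of $(x,y,\alpha)$ with $\alpha$ invertible is equivalent in $\cc{C}'$ to $(x,p(x),\mathrm{id})$, one checks that the essential image lands in the full subcategory $\cc{K}_{1}\subseteq\cc{C}'$ and that the resulting fully faithful functor $(\cc{K}_{1}\times_{\cc{K}_{12}}\cc{K}_{2})/\cc{D}\hookrightarrow\cc{K}_{1}$ is the canonical factorization of $\pi$. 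For essential surjectivity: the source is idempotent complete and this functor is fully faithful, so its essential image is a thick subcategory of $\cc{K}_{1}$, and it suffices to hit each $x\in\cc{K}_{1}$ up to retract. Since $q$ is a Verdier localization followed by idempotent completion, any object of $\cc{K}_{12}$ with vanishing class in $K_{0}(\cc{K}_{12})$ lies in its essential image, by the Neeman--Thomason extension theorem \cite[Corollary 0.9]{neemanConnectionTheoryLocalization1992} (exactly as in the proof of \autoref{lem:orientedbasechange}); applying this to $p(x)\oplus\Sigma p(x)=p(x\oplus\Sigma x)$ yields $y\in\cc{K}_{2}$ with $q(y)\simeq p(x\oplus\Sigma x)$, so the triple $(x\oplus\Sigma x,y,\alpha)$ with $\alpha$ invertible lies in $\cc{K}_{1}\times_{\cc{K}_{12}}\cc{K}_{2}$ and projects to $x\oplus\Sigma x$, of which $x$ is a retract. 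Hence $\pi$ is the Karoubi quotient away from $\cc{D}$.

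The co-Cartesian square then follows formally, since a pushout of a Karoubi quotient along the quotient map is again a Karoubi quotient, cut out by the image of the kernel. Concretely, for $\cc{L}\in\twoCAlg$ the fibre product $\map_{\twoCAlg}(\cc{K}_{2},\cc{L})\times_{\map_{\twoCAlg}(\cc{K}_{1}\times_{\cc{K}_{12}}\cc{K}_{2},\cc{L})}\map_{\twoCAlg}(\cc{K}_{1},\cc{L})$ is the subspace of $\map_{\twoCAlg}(\cc{K}_{2},\cc{L})$ of those $h$ such that $h$ precomposed with the projection $\cc{K}_{1}\times_{\cc{K}_{12}}\cc{K}_{2}\to\cc{K}_{2}$ kills $\cc{D}$; since that projection carries $\cc{D}$ onto $\cc{I}_{2}$, this is exactly the subspace of $h$ killing $\cc{I}_{2}$, i.e.\ $\map_{\twoCAlg}(\cc{K}_{2}/\cc{I}_{2},\cc{L})\simeq\map_{\twoCAlg}(\cc{K}_{12},\cc{L})$. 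By Yoneda the pushout of $\cc{K}_{2}\leftarrow\cc{K}_{1}\times_{\cc{K}_{12}}\cc{K}_{2}\to\cc{K}_{1}$ is $\cc{K}_{12}$; and since $p\circ\pi=q\circ(\text{projection to }\cc{K}_{2})$ while $\pi$ is a Karoubi quotient, the two structure maps of the pushout are forced to be $p$ and $q$, giving precisely the displayed diagram.

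The step I expect to be the main obstacle is the essential surjectivity in the first assertion: passing from ``$p(x)$ is a retract of something in the image of $q$'' to an honest lift of $p(x\oplus\Sigma x)$ genuinely requires the $K_{0}$-theoretic input, just as in \autoref{lem:orientedbasechange}. The rest is bookkeeping with \autoref{lem:secondiso}, \autoref{lem:orientedbasechange}, and universal properties.
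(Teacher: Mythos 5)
Your proof is correct and follows essentially the same route as the paper: full faithfulness via \autoref{lem:secondiso} together with \autoref{lem:orientedbasechange}, essential surjectivity by the $x\oplus\Sigma x$ trick with the Neeman--Thomason $K_0$ criterion, and the pushout identified as the Karoubi quotient of $\cc{K}_{2}$ by the image of the kernel. The only cosmetic difference is that the paper cites \cite[Corollary 2.30]{aokiHigherGeometriesTensor} for the co-Cartesian square, whereas you reprove that fact inline by the Yoneda computation.
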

    \begin{proof}
      It is clear that the following composite \[\cc{K}_{1} \times_{\cc{K}_{12}}\cc{K}_{2} \hookrightarrow \cc{K}_{1} \vectimes_{\cc{K}_{12}}\cc{K}_{2} \to \cc{K}_{1} \vectimes_{\cc{K}_{12}} \cc{K}_{12}\] has essential image contained in the full subcategory of triples $(x,y, \alpha)$ such that $\alpha \in \cc{K}_{12}^{[1],\mm{eq}}$ which by \autoref{rec:vectimes} corresponds exactly to the image of $\cc{K}_{1} \times_{\cc{K}_{12}} \cc{K}_{12} \subseteq \cc{K}_{1} \vectimes_{\cc{K}_{12}}\cc{K}_{12}$. Applying \autoref{lem:orientedbasechange} and \autoref{lem:secondiso}, one obtains a fully faithful inclusion $(\cc{K}_{1} \times_{\cc{K}_{12}}\cc{K}_{2})/(0,\cc{I}_{2},0) \hookrightarrow \cc{K}_{1}$, and it remains to show that this is essentially surjective. This uses the same argument as in the last paragraph of \autoref{lem:orientedbasechange}. 

      The fact that the square is co-Cartesian follows from \cite[Corollary 2.30]{aokiHigherGeometriesTensor}, which states that the following square is co-Cartesian in $\twoCAlg$:
       \[\xymatrix{
        \cc{K}_{1} \times_{\cc{K}_{12}} \cc{K}_{2} \ar[r] \ar[d] & \cc{K}_{1} \ar[d] \\
        \cc{K}_{2} \ar[r] &  \cc{K}_{2}/(0,\cc{I}_{2},0)\\
      }
      \]
      where we have also written $(0,\cc{I}_{2},0)$ to refer to its image in $\cc{K}_{2}$. However, this is exactly $\cc{I}_{2} \subseteq \cc{K}_{2}$, which identifies the bottom horizontal arrow above with the localization map $\cc{K}_{2} \to \cc{K}_{12}$.
    \end{proof}

  \begin{proof}[Proof of \autoref{thm:affineness}]
    We will induct on the number of affines required to cover a rigid qcqs 2-scheme. The case $n = 1$ is trivial. For the inductive case, let $(\cc{X}, \cc{O_{X}})$ admit a cover by affine 2-subschemes $U_{1},...,U_{n}$, where each $U_{i} \simeq \Spec \cc{K}_{i}$ for $\cc{K}_{i} \in \twoCAlg_{\rig}$. Let $U := |(\coprod_{i=1}^{n-1}U_{i})^{\times \bullet}| \in \cc{X}$, and consider the scheme $(\cc{X}_{/U}, \cc{O_{X}}|_{U})$\footnote{spatially, this corresponds to the union of the open subschemes $U_{1},...,U_{n-1}$.}. By the inductive hypothesis, $(\cc{X}_{/U}, \cc{O_{X}|_{U}}) \simeq \Spec \cc{L}$ where $\cc{L} := \cc{O}(U) \in \twoCAlg_{\rig}$. By the assumptions of coherence of $(\cc{X}, \cc{O}_{\cc{X}})$ and \autoref{thm:zariskiisbalmer}, one has that the object $U \times U_{n} \in \tau_{\leq -1}\cc{X}_{/U_{n}} \simeq \tau_{\leq -1}\Spec \cc{K}_{n}$ corresponds to a quasicompact open of the Balmer spectrum. By the same theorem and rigidity, the map $(\cc{X}_{/U \times U_{n}}, \cc{O}_{X}|_{U \times U_{n}}) \hookrightarrow (\cc{X}_{/U_{n}}, \cc{O}_{X}|_{U_{n}})$ may be canonically identified with the map $\Spec \cc{K}_{n}/\langle a \rangle \to \Spec \cc{K}_{n}$ associated to the Karoubi quotient of $\cc{K}_{n}$ by some $a \in \cc{K}_{n}$. Similarly, one may identify the map $(\cc{X}_{/U \times U_{n}}, \cc{O}_{\cc{X}}|_{U \times U_{n}}) \hookrightarrow (\cc{X}_{/U}, \cc{O}_{X}|_{U})$ with the localization map associated to some $b \in \cc{L}$. Using that $U \coprod_{U \times U_{n}} U_{n} \simeq \unit \in \cc{X}$ and \autoref{prop:omnibusetale}, we have the following co-Cartesian square in $\RTop^{\loc}_{\twoCAlg}$:
    \begin{equation}\label{eq:affine1}\xymatrix{
        \Spec \cc{L}/\langle b \rangle \simeq \Spec \cc{K}_{n}/\langle a \rangle  \ar[r] \ar[d]  & \Spec \cc{K}_{n} \ar[d] \\
        \Spec \cc{L} \ar[r] & (\cc{X}, \cc{O_{X}})
      }
    \end{equation}
    where all morphisms are \'etale. Passing to global sections and applying \autoref{cor:verdierbasechange}, one obtains the following Cartesian diagram
    \begin{equation}\label{eq:affine2}\xymatrix{
        \Spec \cc{K}_{n}/\langle a \rangle \simeq \Spec \cc{L}/\langle b \rangle \ar[r] \ar[d] & \Spec \cc{K}_{n} \ar[d] \\
        \Spec \cc{L} \ar[r] & \Spec \Gamma(\cc{X}, \cc{O_{X}}) 
      }
    \end{equation}
    where the vertical and horizontal morphisms are each associated to Karoubi quotients of $\Spec \Gamma(\cc{X}, \cc{O}_{\cc{X}})$, thus every morphism is \'etale.

    Under the equivalence of categories $(\RTop^{\mm{loc}}_{\twoCAlg, \et})_{/\Spec \Gamma(\cc{X},\cc{O}_{\cc{X}})} \simeq \Spec \Gamma(\cc{X}, \cc{O}_{\cc{X}})$ of \hyperref[prop:omnibusetale2]{\autoref*{prop:omnibusetale}.(2)}, the \'etale map $\Spec \cc{L} \times \Spec \cc{K}_{n} \to \Spec \Gamma(\cc{X}, \cc{O}_{\cc{X}})$ corresponds to an effective epimorphism $U \coprod V \twoheadrightarrow \unit$ and hence an identification $\unit \simeq U \coprod_{U \times V} V$. Unwinding the equivalences and applying \hyperref[prop:omnibusetale1]{\autoref*{prop:omnibusetale}.(1)}, we find that the Cartesian diagram of \eqref{eq:affine2} is in fact co-Cartesian as well. There is an ``affinization map'' from the square of \eqref{eq:affine1} to that of \eqref{eq:affine2} induced by the unit of the adjunction $\Gamma \dashv \Spec$. As this is the identity on every vertex except the bottom right, we find that the affinization map $(\cc{X}, \cc{O_{X}}) \to \Spec \Gamma(\cc{X}, \cc{O_{X}})$ is an equivalence, yielding the inductive case and hence the claim. 
  \end{proof}

As promised, we elaborate on the apparent difference between \autoref{def:spectralschemes} and \autoref{def:twoscheme} before concluding.

\begin{remark}\label{rem:whytruncated}
    In \autoref{def:twoscheme} the restriction to truncated objects in part (a) of condition (2) is actually unnecessary, as we now explain.
    
    First we claim that even with just condition (1) and part (b) of condition (2), the underlying frame $\tau_{\leq -1}\cc{X}$ can be shown to be \emph{spatial}, i.e., it is the frame of open subsets of a topological space. Write $\mdef{\mm{Frm}} \simeq \mm{Loc}^{\op}$. Using part (b) of condition (2), the assumption that $\{\coprod U_{i} \to \unit\}$ is an effective epimorphism, we obtain the following equalizer diagram in $\Frm$: \[\tau_{\leq -1}\cc{X} \to \prod \tau_{\leq -1}(\cc{X}_{/U_{i}}) \rightrightarrows \prod \tau_{\leq -1}(\cc{X}_{/U_{i}\times U_{j}}).\] Recall that $\tau_{\leq -1}\cc{X}_{/U_{i}} $ is associated to the frame of opens of the topological space $|\! \Spec \cc{K}_{i}| \simeq \Spc \cc{K}_{i}$ by \autoref{thm:zariskiisbalmer}. The claim now follows from the fact that Stone Duality implies spatial frames are closed under limits (\cite[\S II.3]{Johnstone1986-th}, see also \cite[Recollection 4.8]{aokiHigherGeometriesTensor}).

 Keeping the conditions as above, recall that the maps $\Spec \cc{K}_{i} \to \cc{X} \in \RTop^{[1]}$ are \'etale maps of $\infty$-topoi. In fact, by \cite[Lemma 2.3.16]{lurieDerivedAlgebraicGeometryV} the objects $U_{i}$ are in $\tau_{\leq 0}\cc{X}$ since $\cc{X}$ is $0$-localic and $\cc{X}_{/U_{i}} \simeq \Spec \cc{K}_{i}$ is $0$-localic. Restricting to underlying ordinary topoi yields a map 
 \[
\Shv(|\! \Spec \cc{K}_{i}|; \mm{Sets}) \simeq \Shv(|\tau_{\leq -1}\cc{X}|; \mm{Sets})_{/U_{i}} \to \Shv(|\tau_{\leq -1}\cc{X}|; \mm{Sets})
 \] from which it follows that underlying morphism of spaces $\phi: |\! \Spec \cc{K}_{i}| \to |\tau_{\leq -1} \cc{X}|$ is a local homeomorphism, associated to the \emph{espace \'etal\'e} of the sheaf of sets $U_{i}$ on $|\tau_{\leq -1}\cc{X}|$. 
 
 Given any point $x \in |\tau_{\leq -1} \cc{X}|$ in the image of $\phi$, $x$ is contained in an open neighborhood $U$ which is homeomorphic via the map above to an open neighborhood $U(a) \subseteq |\! \Spec \cc{K}_{i}|$, as these form a basis for the Balmer spectrum. Moreover, since this is an \'etale map of \emph{locally $2$-ringed topoi}, we may deduce that $\phi$ induces equivalences
    \[(\cc{X}_{/U}, \cc{O}|_{U}) \simeq (\Shv(U), \cc{O}|_{U}) \simeq U(a) \simeq \Spec \cc{K}_{i}/\langle a \rangle\] of locally $2$-ringed topoi, where by construction $U \in \tau_{\leq -1}\cc{X}$. Selecting such neighborhoods around every point $x \in |\tau_{\leq -1} \cc{X}|$ gives rise to a collection of objects in $\tau_{\leq -1}\cc{X}$ satisfying the desired parts (a) and (b) of condition (2) in \autoref{def:twoscheme}.
    
\end{remark}

%%% Local Variables:
%%% mode: LaTeX
%%% TeX-master: "../main"
%%% End:

\section{Affineness and Reconstruction of Schemes}\label{sec:recon}

\subsection{Relative affineness of schemes} In this section we demonstrate that the relative spectra of qcqs (Dirac) spectral schemes are affine $2$-schemes. To identify their global sections, we will need to introduce the $2$-ring of perfect complexes on a spectral or Dirac spectral scheme.

\begin{definition}\label{def:perfectcomplexes}
  As per our convention, we write $\mdef{\widehat{\twoCAlg}}$ to denote $\CAlg(\widehat{\Cat}^{\perf})$, the very large $\infty$-category of large 2-rings. Passing to a larger universe, \cite[Construction 6.2.1.7]{lurieSpectralAlgebraicGeometry} constructs an extension of the functor $\Perf : \CAlg \to \twoCAlg \subseteq \widehat{\twoCAlg}$ to a limit-preserving to a functor $\mdef{\Perf} : \Fun(\CAlg, \widehat{\An})^{\op} \to \widehat{\twoCAlg}$.
\end{definition}

\begin{lemma}\label{lem:perfissmall}
  Consider the functor-of-points map
  \[\SpSch \subseteq \RTop^{\loc}_{\CAlg} \xrightarrow{\yo} \Fun(\RTop^{\loc,\op}_{\CAlg}, \widehat{\An}) \xrightarrow{-\circ \Spec} \Fun(\CAlg, \widehat{\An}).\]
  The functor $\Perf$ of \autoref{def:perfectcomplexes} takes values in $\twoCAlg_{\rig} \subseteq \widehat{\twoCAlg}$ when evaluated against any object in the essential image of $\SpSch$. The same holds if $\SpSch$ and $\RTop^{\loc}_{\CAlg}$ are replaced by $\SpSch^{\Dir}$ and $\RTop^{\loc}_{\Dir}$.
\end{lemma}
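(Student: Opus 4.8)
The plan is to reduce the claim to Zariski descent for the functor $\Perf$ on affine $\bb{E}_{\infty}$-rings, together with two facts already recorded: each $\Perf_{A}$ with $A\in\CAlg$ is a \emph{small} rigid $2$-ring, and $\twoCAlg_{\rig}$ is closed under limits (\autoref{ex:rigidity}).

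Fix a spectral scheme $X$ and write $h_{X}:=\yo(X)\circ\Spec\in\Fun(\CAlg,\widehat{\An})$, so that what must be shown is $\Perf_{h_{X}}\in\twoCAlg_{\rig}$. The first point to establish is that the limit-preserving functor $\Perf\colon\Fun(\CAlg,\widehat{\An})^{\op}\to\widehat{\twoCAlg}$ of \autoref{def:perfectcomplexes} satisfies Zariski descent, i.e.\ it inverts Zariski-local equivalences of functors $\CAlg\to\widehat{\An}$; this is a reformulation of the classical fact that $R\mapsto\Perf_{R}$ is a Zariski sheaf on $\CAlg^{\op}$ (even an fpqc sheaf) together with the fact that $\Perf$ is the limit-extension of this sheaf along the Yoneda embedding, see e.g.\ \cite{lurieSpectralAlgebraicGeometry}. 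Granting this, I would choose a set-indexed affine open cover $\{U_{i}=\Spec A_{i}\}_{i\in I}$ of $X$ and let $\check{C}_{\bullet}$ be the \v{C}ech nerve of the effective epimorphism $\coprod_{i}h_{U_{i}}\twoheadrightarrow h_{X}$ of Zariski sheaves; since $h_{X}$ is the geometric realization of $\check{C}_{\bullet}$ in sheaves and $\Perf$, being limit-preserving and Zariski-local, sends this to a limit, one gets $\Perf_{h_{X}}\simeq\lim_{[n]\in\Delta}\Perf(\check{C}_{n})$.

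It then remains to control the terms. One has $\check{C}_{n}\simeq\coprod_{(i_{0},\dots,i_{n})\in I^{n+1}}h_{V_{i_{0}\cdots i_{n}}}$, where $V_{i_{0}\cdots i_{n}}:=U_{i_{0}}\cap\dots\cap U_{i_{n}}$ is an open subscheme of $X$; since $\Perf$ carries a coproduct of representables to the corresponding product (limit-preservation together with descent), $\Perf(\check{C}_{n})\simeq\prod_{I^{n+1}}\Perf(h_{V_{i_{0}\cdots i_{n}}})$. Now each $V_{i_{0}\cdots i_{n}}$ is an open subscheme of the \emph{affine} scheme $\Spec A_{i_{0}}$, hence is covered by its basic affine opens $\Spec A_{i_{0}}[f^{-1}]$; these are closed under intersection, so the \v{C}ech nerve of this cover has affine terms, and a second application of the descent principle above identifies $\Perf(h_{V_{i_{0}\cdots i_{n}}})$ with a limit over $\Delta$ of set-indexed products of $2$-rings of the form $\Perf_{A_{i_{0}}[f^{-1}]}$. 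By \autoref{ex:rigidity} these are small rigid $2$-rings; a set-indexed product of small rigid $2$-rings is again one (smallness is immediate, and duals are formed componentwise), and $\twoCAlg_{\rig}\subseteq\twoCAlg$ is closed under limits, so $\Perf(h_{V_{i_{0}\cdots i_{n}}})\in\twoCAlg_{\rig}$. Feeding this back, each $\Perf(\check{C}_{n})$ lies in $\twoCAlg_{\rig}$, and therefore so does $\Perf_{h_{X}}\simeq\lim_{\Delta}\Perf(\check{C}_{\bullet})$ — here one uses that $\twoCAlg\hookrightarrow\widehat{\twoCAlg}$ preserves small limits, so this limit is computed in $\twoCAlg$, where $\twoCAlg_{\rig}$ is closed under limits. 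The Dirac case is proved verbatim: the only inputs used are the existence of a set-indexed affine open cover (built into \autoref{def:spectralschemes}), Zariski descent for $\Perf$ on affines, and \autoref{ex:rigidity}, none of which is affected by replacing $\GZarsp$ by $\GDirsp$, since covers of schemes are generated by open immersions in either geometry.

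The step I expect to be the main obstacle is the descent reduction: a priori $\Perf_{h_{X}}$ is a limit over the very large category of \emph{all} affines mapping to $X$, and the content is that it is already computed by a single chosen small affine open cover. This is exactly where Zariski (hyper)descent for the affine functor $\Perf_{(-)}$ enters, and it is also why one needs the mild two-step descent above: the scheme-theoretic intersections $V_{i_{0}\cdots i_{n}}$ occurring in the \v{C}ech nerve need not be affine (or even quasi-compact), so one cannot stop after a single step. Everything past that point is routine bookkeeping with small limits and componentwise dualizability.
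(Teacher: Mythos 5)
Your argument for the spectral case is correct, but it takes a genuinely different (and more labor\-/intensive) route than the paper's. The paper's proof never invokes descent for $\Perf$: it cites \cite[Theorem 2.4.1]{lurieDerivedAlgebraicGeometryV} to present the functor of points of a spectral scheme as a \emph{small} colimit of representables in $\Fun(\CAlg,\widehat{\An})$, so that limit-preservation of $\Perf$ alone exhibits $\Perf_{X}$ as a small limit of objects $\Perf_{R}$, and one concludes by closure of $\twoCAlg_{\rig}\subseteq\widehat{\twoCAlg}$ under small limits. You instead realize $h_{X}$ as the colimit of its \v{C}ech nerve \emph{in sheaves} and transport this through $\Perf$ using the fact that $\Perf$ inverts Zariski-local equivalences; this is a legitimate substitute, and your two-step d\'evissage handling the possibly non-affine intersections $V_{i_{0}\cdots i_{n}}$ is carried out correctly. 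But it front-loads the descent property of $\Perf$, which the paper only establishes later (in the proof of \autoref{lem:sheafifyingperf}, via \cite[Corollary D.6.3.3]{lurieSpectralAlgebraicGeometry} and the argument of \cite[Proposition 6.2.3.1]{lurieSpectralAlgebraicGeometry}). What your approach buys is explicitness; what the paper's buys is independence from descent --- and that independence matters for the second half of the statement.

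Concretely, there is a gap in your Dirac case. You assert it is verbatim because the only inputs are a set-indexed affine cover, ``Zariski descent for $\Perf$ on affines,'' and \autoref{ex:rigidity}, ``since covers of schemes are generated by open immersions in either geometry.'' For a Dirac spectral scheme, however, the basic opens $D(f)$ of $|\Spec^{\GDirsp}A|$ appearing in your second \v{C}ech step are indexed by homogeneous elements $f\in\pi_{*}A$ of \emph{arbitrary} degree, and the effective epimorphisms you feed into $\Perf$ are epimorphisms of \emph{Dirac} sheaves. Your route therefore needs $\Perf$ to satisfy Dirac descent --- descent for covers $\{A\to A[f_{i}^{-1}]\}$ with the $f_{i}$ homogeneous and generating the unit ideal of $\pi_{*}A$ --- which is strictly stronger than Zariski descent and does not follow from it. The paper treats this as a separate nontrivial input: the final paragraph of the proof of \autoref{lem:sheafifyingperf} is devoted precisely to establishing Dirac descent for $\Perf$, using the identification of the pushforward structure sheaf on $|\Spec^{\GDirsp}A|$ from \cite[Theorem 4.48]{aokiHigherGeometriesTensor}. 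As written, your proof of the Dirac half of the lemma is incomplete; either supply Dirac descent for $\Perf$ or adopt the paper's descent-free argument, for which the Dirac case genuinely is verbatim.
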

\begin{proof}
  \cite[Theorem 2.4.1]{lurieDerivedAlgebraicGeometryV} implies that the essential image of the functor-of-points factors through $\Fun(\CAlg, \An) \subseteq \Fun(\CAlg, \widehat{\An})$. In particular, any object $X \in \SpSch$ has essential image given by a small colimit of the form in $\Fun(\CAlg, \widehat{\An})$. In particular, as $\Perf$ is limit-preserving, the value of $\Perf$ on $X$ may be presented as a small limit in $\twoCAlg$ of objects of the form $\Perf_{R}$ for $R \in \CAlg$. As these latter objects are small and rigid, and the inclusion $\twoCAlg_{\rig} \subseteq \widehat{\twoCAlg}$ is closed under small limits, the result follows. The same argument holds in the Dirac case.
\end{proof}

Henceforth, the assignment $\Perf$ constructed above from $\SpSch$ (resp. $\SpSch^{\Dir}$) to the $\infty$-category $\twoCAlg_{\rig}$ is referred to as the functor of \tdef{perfect complexes}. With this definition in tow, we are ready to prove the main result of this section, \autoref{thmalph:qcqsschemesareaffine}.

\begin{theorem}\label{thm:affinenessofschemes}
  Let $\cc{G} = \GZarsp$ \emph{(}resp. $\GDirsp$\emph{)} and let $\cc{G}' = \GZarst$. Then for any $X \in \SpSch_{\qcqs}$ \emph{(}resp. $\SpSch_{\qcqs}^{\Dir}$\emph{)} one has a natural identification
  \[
    \Spec^{\cc{G}'}_{\cc{G}}X \simeq \Spec \Perf_{X}
    \]
\end{theorem}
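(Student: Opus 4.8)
The plan is to combine the affineness criterion of \autoref{thm:affineness} with the localicity result \autoref{prop:relativespecislocalic} and the sheaf-theoretic behaviour of relative spectra established in \autoref{sec:relativeschemes}. First I would observe that, by \autoref{prop:relativespecislocalic}, the relative spectrum $\Spec^{\cc{G}'}_{\cc{G}}X$ is a $0$-localic locally $2$-ringed topos. Next, pulling back the affine cover of $X$ along the counit $\Spec^{\cc{G}'}_{\cc{G}}X \to X$: using \hyperref[prop:omnibusetale3]{\autoref*{prop:omnibusetale}.(3)} and \autoref{ex:relativeaffinespectra}, for each affine chart $(\Shv(X)_{/U_{i}}, \cc{O}_{X}|_{U_{i}}) \simeq \Spec A_{i}$ one obtains $(\Spec^{\cc{G}'}_{\cc{G}}X)_{/f^{\ast}U_{i}} \simeq \Spec^{\cc{G}'}_{\cc{G}}\Spec A_{i} \simeq \Spec^{\GZarst}\Perf_{A_{i}}$. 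Since $\Perf_{A_{i}}$ is a rigid $2$-ring by \autoref{ex:rigidity}, this exhibits $\Spec^{\cc{G}'}_{\cc{G}}X$ as a rigid $2$-scheme in the sense of \autoref{def:twoscheme} (the truncatedness of the $U_{i}$ following as in the proof of \autoref{prop:relativespecislocalic}, using \cite[Theorem 2.40]{lurieDerivedAlgebraicGeometry2011a}).

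Then I would verify that $\Spec^{\cc{G}'}_{\cc{G}}X$ is qcqs, i.e., that its underlying $0$-localic topos is coherent. Since $X$ is qcqs by hypothesis and the counit map $\Spec^{\cc{G}'}_{\cc{G}}X \to X$ is an equivalence on underlying $\infty$-topoi once one restricts to $(-1)$-truncated objects — indeed, the co-Cartesian squares of \hyperref[prop:omnibusetale3]{\autoref*{prop:omnibusetale}.(3)} show that the map on underlying topoi is obtained by gluing the identity on each chart's underlying space, as $\Spec A_i$ and $\Spec \Perf_{A_i}$ have the same underlying space by \autoref{thm:zariskiisbalmer} combined with Thomason's identification $\Spc \Perf_{A_i} \simeq \Spec \pi_0 A_i$ — coherence transfers from $X$ to $\Spec^{\cc{G}'}_{\cc{G}}X$. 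Applying \autoref{thm:affineness}, we conclude that $\Spec^{\cc{G}'}_{\cc{G}}X$ is an affine $2$-scheme, so $\Spec^{\cc{G}'}_{\cc{G}}X \simeq \Spec \cc{K}$ for $\cc{K} := \Gamma(\Spec^{\cc{G}'}_{\cc{G}}X, \cc{O}) \in \twoCAlg_{\rig}$ (rigidity by \autoref{ex:rigidity}, since $\twoCAlg_{\rig}$ is closed under limits and $\cc{K}$ is a limit of the rigid charts $\Perf_{A_i}$).

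To identify $\cc{K}$ with $\Perf_{X}$, I would use \autoref{ex:relativespecstructuresheaf}: the assignment $(\cc{Y}, \cc{O}) \mapsto \Gamma_{\cc{G}'}(\Spec^{\cc{G}'}_{\cc{G}}(\cc{Y}, \cc{O}))$ is a sheaf on $(\LTop(\cc{G}), \can)$, so $\cc{K} = \Gamma_{\cc{G}'}(\Spec^{\cc{G}'}_{\cc{G}}X) \simeq \varprojlim_{\Spec A_i \to X} \Gamma_{\cc{G}'}(\Spec^{\cc{G}'}_{\cc{G}}\Spec A_i) \simeq \varprojlim \Perf_{A_i}$, the limit taken over the \v{C}ech nerve of the affine cover. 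On the other hand, by \autoref{lem:perfissmall} and the fact that $\Perf$ is limit-preserving, $\Perf_{X} \simeq \varprojlim \Perf_{A_i}$ over the same diagram, since $X$ is a colimit of its affine charts in $\Fun(\CAlg, \widehat{\An})$. These two descriptions agree compatibly, giving $\cc{K} \simeq \Perf_{X}$, hence $\Spec^{\cc{G}'}_{\cc{G}}X \simeq \Spec \Perf_{X}$. Naturality in $X$ follows since every construction used — the counit, the descent presentation, and $\Perf$ — is natural in $X$.

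The main obstacle I anticipate is the comparison of the two limit presentations of $\cc{K}$ and $\Perf_X$: one must check that the \v{C}ech nerve governing the structure-sheaf descent in \autoref{ex:relativespecstructuresheaf} matches the diagram over which $\Perf$ is computed as a limit in \autoref{lem:perfissmall}, and that the identification $\Spec^{\cc{G}'}_{\cc{G}}\Spec A_i \simeq \Spec \Perf_{A_i}$ from \autoref{ex:relativeaffinespectra} is compatible with the restriction maps on both sides. This is essentially a bookkeeping matter about the canonical topology and the functor of points, but it requires care to set up cleanly.
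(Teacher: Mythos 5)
Your overall strategy---localicity via \autoref{prop:relativespecislocalic}, the \'etale cover of the relative spectrum by $\Spec \Perf_{A_i}$ via \hyperref[prop:omnibusetale3]{\autoref*{prop:omnibusetale}.(3)} and \autoref{ex:relativeaffinespectra}, rigidity from \autoref{ex:rigidity}, the affineness criterion, and then identification of global sections by descent---matches the paper's. However, your verification that $\Spec^{\cc{G}'}_{\cc{G}}X$ is qcqs contains a genuine error. You assert that the counit is an equivalence on underlying topoi because ``$\Spec A_i$ and $\Spec \Perf_{A_i}$ have the same underlying space by \dots\ Thomason's identification $\Spc \Perf_{A_i} \simeq \Spec \pi_0 A_i$.'' Thomason's theorem holds for \emph{ordinary} commutative rings, not for general $\bb{E}_\infty$-rings: for $A_i = \bb{S}$ the space $\Spc \Perf_{\bb{S}}$ is the chromatic Balmer spectrum of finite spectra, not $|\Spec \bb{Z}|$. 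The paper itself stresses (in the remarks following \autoref{thm:ntbrecon} and in \autoref{rem:differences}) that the comparison $\gamma_X\colon \Spec\Perf_X \to X$ generally fails even to be surjective for spectral schemes; the content of \autoref{thmalpha:comparison} is precisely that it is an equivalence in the special case of ordinary schemes. So coherence cannot be transferred by declaring the map of underlying spaces to be a homeomorphism. The conclusion is nonetheless true and can be repaired: take a finite affine cover (quasicompactness of $X$); each chart contributes a quasicompact open $|\Spec\Perf_{A_i}|$, a spectral space by \autoref{thm:zariskiisbalmer}, and a basic open $D(f) \subseteq |\Spec A_i|$ pulls back to the basic, hence quasicompact, Balmer open $U(\cofib(f)) \subseteq \Spc\Perf_{A_i}$, since $\Perf_{A_i} \to \Perf_{A_i[f^{-1}]}$ is the Karoubi quotient by $\langle \cofib(f)\rangle$. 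Quasiseparatedness of $X$ then yields quasicompact pairwise intersections, and coherence of the underlying locale follows.

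On the identification of global sections: comparing the two \v{C}ech limits is the right idea and is what the paper's \autoref{lem:sheafifyingperf} carries out, but the ``bookkeeping matter'' you defer at the end is where essentially all of the remaining work lives, and it is exactly what is needed for the naturality in $X$ asserted in the statement. The paper handles this by exhibiting both sides as values of canonical-topology sheafifications of a single natural transformation $\alpha^h$ (\autoref{cons:compmap}), rather than comparing limits chart-by-chart; your closing sentence ``naturality follows since every construction used is natural in $X$'' does not by itself produce a coherent natural equivalence. Finally, in the Dirac case one must separately verify that $\Perf$ satisfies Dirac descent on $\CAlg^{\op}$ (the last paragraph of \autoref{lem:sheafifyingperf}); your argument does not address this.
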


\begin{proof}
  \autoref{prop:relativespecislocalic} and \hyperref[prop:omnibusetale2]{\autoref*{prop:omnibusetale}.(2)} imply that $\Spec^{\cc{G}'}_{\cc{G}}X$ is a qcqs $2$-scheme. Consider any collection of objects $U_{i} \in \tau_{\leq -1}\Shv(X)$ which jointly cover $\unit \in \Shv(X)$ and satisfy $(\Shv(X)_/{U_{i}}, \cc{O}_{X} |_{U_{i}}) \simeq \Spec R_{i}$. \hyperref[prop:omnibusetale3]{\autoref*{prop:omnibusetale}.(3)} sends this to an \'etale cover of $\Spec^{\cc{G}'}_{\cc{G}}X$ by objects of the form $\Spec \Perf_{R_{i}}$, each of which is associated to a $(-1)$-truncated object of the underlying $\infty$-topos of $\Spec^{\cc{G}'}_{\cc{G}}X$. As these are rigid by \autoref{ex:rigidity}, the relative spectrum is a rigid qcqs $2$-scheme. By \autoref{thm:affineness}, it follows that the relative spectrum is affine, and the result will follow if we can supply natural identification \[\Gamma(\Spec^{\cc{G}'}_{\cc{G}}X, \cc{O}) \simeq \Perf_{X},\] which is done below in \autoref{lem:sheafifyingperf}.
\end{proof}

We demonstrate the missing proposition (\autoref{lem:sheafifyingperf}) required to complete the proof of \autoref{thm:affineness}.

\begin{construction}\label{cons:compmap} Consider the following diagram of left adjoints:
  \begin{equation}\label{eq:predescentsquare}\xymatrix{
 \CAlg \ar[rr]^{\Spec} \ar[d]_{\Perf}& &\LTop(\cc{G}) \ar[d]^{\Spec^{\cc{G}'}_{\cc{G}}} \ar@{}[dll]|\nwtrans \ar@{}[dll]<-1.4ex>|\alpha \\
   \twoCAlg \ar[rr]_{\Spec} & & \LTop(\cc{G}')
}
 \end{equation}
where $\alpha \in \Fun(\CAlg, \LTop(\cc{G}'))^{[1]}$ is the natural equivalence of \autoref{ex:relativeaffinespectra}; note that we have dropped the superscripts from the horizontal functors and leave the geometries implicit. Using the adjunction of \autoref{thm:absolutespectra} and passing to horizontal mates, we obtain a comparison morphism:
\begin{equation}\label{eq:unsheafybalmermap}
  \alpha^{h}: \Perf_{\Gamma(-,\cc{O})}  \Rightarrow \Gamma(\Spec^{\cc{G}'}_{\cc{G}}(-), \cc{O}) \in \Fun(\LTop(\cc{G}), \twoCAlg)^{[1]}
\end{equation}
 
\end{construction}

\begin{definition}
  We refer to the admissible topologies on $\CAlg^{\op}$ under the identifications $\CAlg^{\op} \simeq \Pro(\GZarsp)$, $\CAlg^{\op} \simeq \Pro(\GDirsp)$ as the \tdef{Zariski} and \tdef{Dirac} topologies respectively. Here the notion of admissible is as in \cite[Notation 2.2.2]{lurieDerivedAlgebraicGeometryV}: For a given geometry $\cc{G}$, recall that an \tdef{admissible} morphism $f: U \to X$ for $U, X \in \Pro(\cc{G})$ is one for which there exists a pullback diagram as follows
\[\xymatrix{
    U \ar[r]^{f} \ar[d] & U' \ar[d]^{f'} \\
    X \ar[r] & X' \\
  }
  \]
where $U', X' \in \cc{G}$ and $f'$ is an admissible morphism for $\cc{G}$. 
\end{definition}

The next proposition will explicitly identify $\alpha^{h}$ and its target using Zariski (resp. Dirac) descent, thereby concluding this subsection.

\begin{proposition}\label{lem:sheafifyingperf}
  The inclusion \[\Shv_{\can}(\SpSch;\widehat{\twoCAlg}) \subseteq \Fun(\SpSch^{\op}, \widehat{\twoCAlg})\] admits a left adjoint localization. We refer to both this localization and its unit transformation as \mdef{sheafification with respect to the canonical topology}. Keeping the notation of \autoref{cons:compmap}, we have:
  \begin{enumerate}
  \item The map $\alpha^{h} \in \Fun(\SpSch^{\op}, \twoCAlg)^{[1]}$ is identified with the sheafification of the source with respect to the canonical topology.
  \item The sheafification of $\Perf_{\Gamma(-,\cc{O})}$ with respect to the canonical topology on $\SpSch$ is given by the assignment $X \mapsto \Perf_{X}$.
  \end{enumerate}
  The same results hold if $\SpSch$ is replaced by $\SpSch^{\Dir}$.
\end{proposition}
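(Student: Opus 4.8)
The strategy is to transport the whole statement to the affine site $\CAlg^{\op}\simeq\Pro(\GZarsp)$ (resp.\ $\Pro(\GDirsp)$), where the presheaf $R\mapsto\Perf_{R}$ is already a sheaf, and to use that $\Perf_{X}$ is by construction a right Kan extension of it. So the first step is to check that the full subcategory $\CAlg^{\op}\hookrightarrow\SpSch$ of affine spectral schemes is a basis for the canonical topology: every spectral scheme admits a $\can$-cover by affines by \autoref{def:spectralschemes}, and an \'etale morphism of $\GZarsp$- (resp.\ $\GDirsp$-)structured topoi (\autoref{def:etale}) with affine source and target is a disjoint union of Zariski (resp.\ Dirac) localizations $\Spec R\to\Spec R[x^{-1}]$, so the topology induced on $\CAlg^{\op}$ is exactly the Zariski (resp.\ Dirac) topology fixed above. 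By the comparison lemma for dense subsites, restriction along $\CAlg^{\op}\hookrightarrow\SpSch$ is then an equivalence $\Shv_{\can}(\SpSch;\widehat{\twoCAlg})\xrightarrow{\ \sim\ }\Shv_{\Zar}(\CAlg^{\op};\widehat{\twoCAlg})$, with inverse given by right Kan extension. Now $\widehat{\twoCAlg}$ is presentable in the ambient large universe and the admissible topology on $\Pro(\GZarsp)$ is generated by the topology of the small category $\GZarsp$, so $\Shv_{\Zar}(\CAlg^{\op};\widehat{\twoCAlg})\subseteq\Fun(\CAlg,\widehat{\twoCAlg})$ is an accessible (in particular reflective) localization; since restriction along $\CAlg^{\op}\hookrightarrow\SpSch$ carries $\can$-local equivalences to $\Zar$-local equivalences and is a left adjoint, it is compatible with sheafification, and transporting the reflection back along the comparison equivalence yields the localization asserted in the first sentence of the proposition.

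\emph{The map $\alpha^{h}$ on affines.} Next I would evaluate the mate $\alpha^{h}$ of \autoref{cons:compmap} on an affine spectral scheme $\Spec^{\GZarsp}R$. Since the upper horizontal functor $\Spec^{\GZarsp}\colon\CAlg\to\LTop(\GZarsp)$ is fully faithful, the mate of the equivalence $\alpha$ of \autoref{ex:relativeaffinespectra} is automatically an equivalence on the essential image of $\Spec^{\GZarsp}$, i.e.\ on affine spectral schemes. Concretely, at $\Spec^{\GZarsp}R$ the source of $\alpha^{h}$ is $\Perf_{\Gamma(\Spec R,\cc{O}_{R})}\simeq\Perf_{R}$ by subcanonicity of the Zariski topology on $\CAlg$, the target is $\Gamma(\Spec^{\GZarst}_{\GZarsp}\Spec^{\GZarsp}R,\cc{O})\simeq\Gamma(\Spec^{\GZarst}\Perf_{R},\cc{O}_{\Perf_{R}})\simeq\Perf_{R}$ using \autoref{ex:relativeaffinespectra}, the rigidity of $\Perf_{R}$ (\autoref{ex:rigidity}) and the subcanonicity of \autoref{thm:zariski2subcanonical}, and the triangle identities identify $\alpha^{h}_{\Spec^{\GZarsp}R}$ with the resulting canonical identification $\Perf_{R}\simeq\Perf_{R}$. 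Thus $\alpha^{h}$ restricts on $\CAlg^{\op}$ to an equivalence from $S:=\Perf_{\Gamma(-,\cc{O})}$ onto $T:=\Gamma(\Spec^{\GZarst}_{\GZarsp}(-),\cc{O})$; in particular $S|_{\CAlg^{\op}}$ is the functor $R\mapsto\Perf_{R}$.

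\emph{Conclusion.} By \autoref{ex:relativespecstructuresheaf} (restricted along $\SpSch^{\op}\hookrightarrow\LTop(\GZarsp)$) the functor $T$ is a $\can$-sheaf; hence by the previous paragraph $S|_{\CAlg^{\op}}=(R\mapsto\Perf_{R})$ is a $\Zar$-sheaf and equals its own $\Zar$-sheafification. Under the comparison equivalence, $\Zar$-sheafification on $\CAlg^{\op}$ corresponds to $\can$-sheafification on $\SpSch$, so the $\can$-sheafification of $S$ restricts on $\CAlg^{\op}$ to $S|_{\CAlg^{\op}}$ itself, and for any $\can$-sheaf $F$ both $\map(T,F)$ and $\map(S,F)$ compute as $\map_{\Fun(\CAlg,\widehat{\twoCAlg})}(S|_{\CAlg^{\op}},F|_{\CAlg^{\op}})$ via $\alpha^{h}$; this exhibits $\alpha^{h}\colon S\to T$ as the unit of $\can$-sheafification, which is item (1). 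For item (2), the inverse of the comparison equivalence is right Kan extension, so $\can$-$\mathrm{sheafify}(S)$ at $X$ is $\varprojlim_{\Spec R\to X}\Perf_{R}$; as in the proof of \autoref{lem:perfissmall}, the functor of points of $X$ is $\varinjlim_{\Spec R\to X}\Spec R$ and $\Perf$ of \autoref{def:perfectcomplexes} is limit-preserving, so this limit is $\Perf_{X}$. The Dirac statement is identical with $\GDirsp$ in place of $\GZarsp$.

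\emph{Main obstacle.} The work is concentrated in the first step: one must verify that affine spectral schemes form a dense subsite of $(\SpSch,\can)$ with induced topology the Zariski (resp.\ Dirac) topology — equivalently, that a presheaf on $\SpSch$ is a $\can$-sheaf precisely when its restriction to affines satisfies Zariski (resp.\ Dirac) descent. This packages both the (easy) existence of affine covers and the (more delicate) analysis of \'etale morphisms of structured topoi between affine objects, together with the size bookkeeping needed to conclude that $\can$-sheafification exists. The mate computation in the second step is formal but should be carried out carefully enough to see that $\alpha^{h}$ restricts to the \emph{canonical} identification on affines, not merely to some equivalence.
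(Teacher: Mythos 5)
Your proposal is correct and follows essentially the same route as the paper: reduce along the dense subsite of affines to Zariski (resp.\ Dirac) sheaves on $\CAlg^{\op}$ (the paper constructs this equivalence $\iota^{\ast}\dashv\iota_{\ast}$ by hand via the functor-of-points embedding and a conservativity check through quasi-affines, and obtains the $\widehat{\twoCAlg}$-valued sheafification by reducing to space-valued sheafification through $\twoCAlg\simeq\Fun^{\lex}(\twoCAlg^{\omega,\op},\An)$ rather than by appealing to presentability), observe that $\alpha^{h}$ is an equivalence on affines while its target is already a $\can$-sheaf by \autoref{ex:relativespecstructuresheaf}, and identify the sheafification of the source with $X\mapsto\Perf_{X}$ using that $\Perf$ satisfies Zariski (resp.\ Dirac) descent and is limit-preserving. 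One small imprecision: an \'etale map between affines is an espace \'etal\'e (a local homeomorphism), not necessarily a disjoint union of principal localizations, but since any jointly surjective family of such maps refines to a cover by principal opens, the induced topology on $\CAlg^{\op}$ still has the same sheaves as the Zariski topology and your comparison-lemma step goes through.
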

\begin{proof} Our first task is to construct the desired left adjoint localization. For our purposes, we will need to derive an explicit formula in terms of Zariski sheaves on $\CAlg^{\op}$. Let $\widehat{\Shv}_{\mm{Zar}}(\CAlg^{\op})$ denote the $\infty$-category of Zariski sheaves on $\CAlg^{\op}$ valued in $\widehat{\An}$. Recall that the inclusion \[\widehat{\Shv}_{\mm{Zar}}(\CAlg^{\op}) \subseteq \Fun(\CAlg, \widehat{\An})\] admits a left-exact left adjoint localization by applying \cite[Lemma 6.2.2.7]{lurieHigherToposTheory2009} in a larger universe. The functor of points embedding \cite[Theorem 2.4.1]{lurieDerivedAlgebraicGeometryV} states that the following composite is fully faithful \[\SpSch \xrightarrow{\yo} \Fun(\SpSch, \widehat{\An}) \xrightarrow{- \circ \Spec} \Fun(\CAlg, \widehat{\An}) \to \widehat{\Shv}_{\mm{Zar}}(\CAlg^{\op})\] and furthermore sends colimits in $\SpSch_{\et} \subseteq \LTop(\GZarsp)_{\et}$ to colimits in $\Shv_{\mm{Zar}}(\CAlg)$. Thus, for any very large $\infty$-category $\cc{C}$ admitting all large colimits, the induced right adjoint functor $\Shv_{\mm{Zar}}(\CAlg^{\op}; \cc{C}) \to \Fun(\SpSch, \cc{C})$ lifts to $\Shv_{\can}(\SpSch; \cc{C})$ using the same argument as in \autoref{ex:relativespecstructuresheaf}. This gives rise to the following adjunction
  \[
    \iota^{\ast}: \Shv_{\can}(\SpSch;\cc{C}) \rightleftarrows \Shv_{\mm{Zar}}(\CAlg^{\op};\cc{C}) : \iota_{\ast}
  \]
  here,  $\iota^{\ast}$ is given by restricting a sheaf along the map of sites $\Spec: (\CAlg^{\op}, \mm{Zar}) \hookrightarrow (\SpSch, \can)$. To describe $\iota_{\ast}$, note that there is an equivalence \[\Shv_{\mm{Zar}}(\CAlg^{\op}; \cc{C}) \simeq \Fun^{\mm{R}}(\widehat{\Shv}_{\mm{Zar}}(\CAlg^{\op})^{\op}, \cc{C})\] which is inverse to the restriction from the right hand side to the left hand side, see \cite[Proposition 1.3.1.7]{lurieSpectralAlgebraicGeometry}. Under this equivalence, $\iota_{\ast}$ sends a sheaf $\mathscr{F} \in \Shv_{\mm{Zar}}(\CAlg^{\op}; \cc{C})$ to the composite \[\SpSch^{\op} \xrightarrow{\yo} \widehat{\Shv}_{\can}(\SpSch)^{\op} \to \widehat{\Shv}_{\mm{Zar}}(\CAlg^{\op})^{\op} \xrightarrow{\mathscr{F}} \cc{C}.\] Now recall that by the subcanonicality of the Zariski topology, the functor \[\Spec: \CAlg^{\op} \to \SpSch \to \Fun(\CAlg, \widehat{\An})^{\op}\] may be identified with the Yoneda embedding. From this, it is easy to directly compute that the counit $\iota^{\ast}\iota_{\ast} \simarrow \mm{id}$ as endofunctors of $\widehat{\Shv}_{\mm{Zar}}(\CAlg^{\op})$. Additionally, $\iota^{\ast}$ is conservative, owing to the following two facts:
  \begin{itemize}
  \item Every quasi-affine spectral scheme $X$ admits a canonical cover by affines $\{\coprod \Spec R_{i} \twoheadrightarrow X\}$ with all terms in the associated \v{C}ech complex affine.
  \item Every object $X \in \SpSch$ admits a canonical cover by affines with all terms in the associated \v{C}ech complex quasi-affine.
  \end{itemize}
  It follows that the restriction $\iota^{\ast}$ is conservative, as any morphism $\mathscr{F} \to \mathscr{G} \in \Shv_{\can}(\SpSch)^{[1]}$ inverted by $\iota^{\ast}$ is by definition an equivalence when evaluated on any affine, hence on any quasi-affine, and therefore on arbitrary spectral schemes. We deduce that the pair $\iota^{\ast}\dashv \iota_{\ast}$ supply mutually inverse equivalences.   
  Let us now demonstrate claim (1). Note first that there is an equivalence $\twoCAlg \simeq \Fun^{\mm{lex}}(\twoCAlg^{\omega, \op}, \An)$. This induces a composite equivalence  \begin{equation} \begin{aligned}
    \Shv_{\mm{Zar}}(\CAlg^{\op}; \widehat{\twoCAlg})  \simeq & \Shv_{\mm{Zar}}(\CAlg^{\op}; \widehat{\twoCAlg}) \\
    \simeq & \Fun^{\mm{R}}(\widehat{\Shv}_{\mm{Zar}}(\CAlg^{\op}), \Fun^{\mm{lex}}(\CAlg^{\omega,\op}, \widehat{\An})) \\
    \simeq & \Fun^{\lex, \mm{R}}( \CAlg^{\omega, \op} \times \widehat{\Shv}_{\mm{Zar}}(\CAlg^{\op}), \widehat{\An}) \\
    \simeq & \Fun^{\lex}(\CAlg^{\omega, \op}, \Fun^{\mm{R}}(\widehat{\Shv}_{\mm{Zar}}(\CAlg^{\op}), \widehat{\An})) \\
    \simeq & \Fun^{\lex}(\CAlg^{\omega,\op}, \widehat{\Shv}_{\mm{Zar}}(\CAlg^{\op}))
    \end{aligned}
  \end{equation} which also holds if $\widehat{\Shv}_{\mm{Zar}}(\CAlg^{\op})$ (resp. for target $\widehat{\twoCAlg}$) is replaced with $\Fun(\CAlg, \widehat{\An})$ (resp. for target $\widehat{\twoCAlg}$). Moreover, these equivalences sit in a commutative square of adjoints:
  \[\xymatrix{
      \Fun(\CAlg, \widehat{\twoCAlg}) \ar@<0.7ex>[r] \ar[d]^{\simeq} &  \ar@<0.7ex>[l]^{\supseteq}\Shv_{\mm{Zar}}(\CAlg; \widehat{\twoCAlg}) \ar[d]^{\simeq} \\
      \Fun^{\lex}(\CAlg^{\omega, \op}, \Fun(\CAlg, \widehat{\An})) \ar@<0.7ex>[r] & \ar@<0.7ex>[l]^{\supseteq} \Fun^{\lex}(\CAlg^{\omega,\op}, \widehat{\Shv}_{\mm{Zar}}(\CAlg^{\op}))
    }
  \] see for example the proof of \cite[Lemma 3.27]{aokiHigherGeometriesTensor}. In particular, there is a left adjoint \tdef{sheafification with respect to the Zariski topology} 
    \[
      \Fun(\CAlg, \widehat{\twoCAlg}) \to \Shv_{\mm{Zar}}(\CAlg^{\op}; \widehat{\twoCAlg})
      \]
      whose right adjoint is the natural forgetful inclusion. The desired sheafification with respect to the \emph{canonical} topology is then given by the composite \[\Fun(\SpSch^{\op}, \widehat{\twoCAlg}) \to \Fun(\CAlg, \widehat{\twoCAlg}) \to \Shv_{\mm{Zar}}(\CAlg^{\op}; \widehat{\twoCAlg}) \simarrow \Shv_{\can}(\SpSch; \widehat{\twoCAlg}).\]

      To demonstrate claim (1), recall that the map $\alpha^{h}$ is an equivalence on affine schemes by \autoref{ex:relativeaffinespectra} and thus it must be sent to an equivalence after sheafification with respect to the canonical topology, using the previous paragraph. \autoref{ex:relativespecstructuresheaf} shows that it is already the case that \[\Gamma(\Spec^{\GZarst}_{\GZarsp}(-), \cc{O}) \in \Shv_{\can}(\SpSch; \twoCAlg)\] and hence $\alpha^{h}$ must be identified with the sheafification of its source.

  To conclude, it remains to identify the image of $\Perf_{\Gamma(-,\cc{O})}$ under sheafification with respect to the canonical topology. By construction, this is given by the composite \[\SpSch \hookrightarrow \widehat{\Shv}_{\mm{Zar}}(\CAlg)^{\op} \xrightarrow{\mathscr{Perf}} \widehat{\twoCAlg}\] where $\mathscr{Perf}$ is the image of the functor $\Perf \in \Fun(\CAlg, \widehat{\twoCAlg})$ under the composite
  \[
    \Fun(\CAlg, \widehat{\twoCAlg}) \to \Shv_{\mm{Zar}}(\CAlg^{\op}; \widehat{\twoCAlg}) \simeq \Fun^{\mm{R}}(\widehat{\Shv}_{\mm{Zar}}(\CAlg^{\op})^{\op}, \widehat{\twoCAlg}).
  \] Recall that the functor $\Perf$ already satisfies Zariski descent on $\CAlg^{\op}$, which follows from the fact that $\Mod$ satisfies Zariski descent \cite[Corollary D.6.3.3]{lurieSpectralAlgebraicGeometry} and the fact that passage to subcategories of dualizable objects is a limit preserving functor. Using the argument of \cite[Proposition 6.2.3.1]{lurieSpectralAlgebraicGeometry}, we find that the functor $\Perf: \Fun(\CAlg, \widehat{\An})^{\op} \to \widehat{\twoCAlg}$ factors through the sheafification with respect to the Zariski topology, and in particular that $\mathscr{Perf} \simeq \Perf$ when regarded as objects of $\Fun(\CAlg, \widehat{\twoCAlg})$. It follows that $\mathscr{Perf}$ can be identified with the composite
  \[
    \widehat{\Shv}_{\mm{Zar}}(\CAlg^{\op})^{\op} \subseteq \Fun(\CAlg, \widehat{\An})^{\op} \xrightarrow{\Perf} \widehat{\twoCAlg}
    \]
    and thus the image of $\Perf_{\Gamma(-,\cc{O})}$ under the sheafification with respect to the canonical topology on $\SpSch$ may be definitionally identified with the functor of perfect complexes.

    Finally, the only missing component for running the arguments above in the Dirac case is the fact that the functor $\Perf:\CAlg \to \twoCAlg$ in fact satisfies Dirac descent. For a given $R \in \CAlg$, the comparison map
    \[
       \Spec^{\GZarst}_{\GDirsp}\Spec^{\GDirsp}R \to \Spec^{\GDirsp}R 
    \]
    identifies the pushforward of the structure sheaf on the source with the unique assignment on $|\! \Spec^{\GDirsp}R|$ satisfying $\{D(f)\subseteq |\! \Spec^{\GDirsp}R|\} \mapsto \Perf_{R[f^{-1}]}$ where $D(f)$ is a basic quasicompact open subset of $|\! \Spec^{\GDirsp}R|$, see \cite[Section 3.E]{aokiHigherGeometriesTensor} for the structure theory and \cite[Theorem 4.48]{aokiHigherGeometriesTensor} for the identification of sheaves. Since this is in fact a sheaf, we may conclude that $\Perf$ satisfies descent for Dirac covers of $R$, and hence that $\Perf$ satisfies Dirac descent on $\CAlg^{\op}$. 
\end{proof}

\subsection{A reconstruction theorem of Balmer}
We now obtain \autoref{thmalpha:comparison} as a corollary of \autoref{thm:affineness}. 

\begin{corollary}\label{thm:ntbrecon}
  There is a natural comparison transformation
  \[
  \gamma: (\Spec \Perf_{(-)}, \cc{R}_{\cc{O}_{\Perf_(-)}}) \to \mm{id} 
  \] of functors from $\SpSch$ to $\RTop^{\loc}_{\CAlg}$. For a $X$ an ordinary qcqs scheme, regarded as a $0$-truncated spectral scheme, the comparison map $\gamma$ evaluated on $X$ is an equivalence.
\end{corollary}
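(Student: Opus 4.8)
\emph{Construction of $\gamma$.} For $X\in\SpSch$, the adjunction $\Spec^{\GZarst}_{\GZarsp}\dashv\mm{res}$ of \autoref{def:relativespectrum} produces a natural comparison between $X$ and $\mm{res}\,\Spec^{\GZarst}_{\GZarsp}X$; identifying $\Spec^{\GZarst}_{\GZarsp}X\simeq\Spec\Perf_X$ by \autoref{thm:affinenessofschemes} and its restriction along $\GZarsp\to\GZarst$ with $(\Spec\Perf_X,\cc{R}_{\cc{O}_{\Spec\Perf_X}})$ by \cite[Lemma 3.22]{aokiHigherGeometriesTensor}, this comparison is exactly the map $\gamma_X\colon(\Spec\Perf_X,\cc{R}_{\cc{O}_{\Spec\Perf_X}})\to X$ in $\RTop^{\loc}_{\CAlg}$ induced by $\mm{id}_{\Perf_X}$, and it is manifestly natural in $X$. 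To show $\gamma_X$ is an equivalence for an ordinary qcqs scheme $X$ it suffices to prove: \emph{(a)} its underlying geometric morphism $\Shv(\Spc\Perf_X)\to\Shv(X)$ is an equivalence; and, granting \emph{(a)}, \emph{(b)} the induced morphism of structure sheaves $\cc{O}_X\to\cc{R}_{\cc{O}_{\Spc\Perf_X}}$ on $X\simeq\Spc\Perf_X$ is an equivalence.

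\emph{Step (a): recovering the space.} As $\Spec^{\GZarst}_{\GZarsp}$ preserves limits of \'etale diagrams (\autoref{lem:relativespecsheaf}) and $\mm{res}$ preserves limits, $\gamma$ carries the augmented \v{C}ech diagram of an affine open cover of $X$ --- a limit diagram by \autoref{prop:omnibusetale} --- to a limit diagram; since distinguished affine opens of a scheme have affine pairwise intersections, iterating reduces \emph{(a)} by topos descent to the case $X=\Spec R$ for an arbitrary commutative ring $R$. On underlying spaces $\gamma_{\Spec R}$ is the comparison morphism $\Spc\Perf_R\to\Spec R$ of \cite[\S 5]{balmerSpectrumPrimeIdeals2005} (its source identified via \autoref{thm:zariskiisbalmer}). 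Writing $R=\varinjlim_\alpha R_\alpha$ as the filtered colimit of its finitely generated subrings, each $R_\alpha$ is noetherian, and since $A\mapsto\Perf_A$, $\cc{K}\mapsto\Spc\cc{K}$ and $A\mapsto\Spec A$ are all continuous one gets $\Shv(\Spc\Perf_R)\simeq\varprojlim_\alpha\Shv(\Spc\Perf_{R_\alpha})$ and $\Shv(\Spec R)\simeq\varprojlim_\alpha\Shv(\Spec R_\alpha)$ compatibly with $\gamma$, reducing to $R$ noetherian. For $R$ an ordinary noetherian ring the comparison morphism $\Spc\Perf_R\to\Spec R$ is a homeomorphism by the classification of the thick subcategories of $\Perf_R$ due to Hopkins and Neeman \cite{hopkinsGlobalMethodsHomotopy1987,neemanChromaticTower1992}, which gives \emph{(a)}; note that this step already recovers Thomason's reconstruction of the underlying space of a qcqs scheme from $\Perf_X$, modulo the noetherian case.

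\emph{Step (b): recovering the structure sheaf.} Granting \emph{(a)} for every qcqs scheme, $\gamma_X^{\#}\colon\cc{O}_X\to\cc{R}_{\cc{O}_{\Spc\Perf_X}}$ is a morphism of sheaves of $\bb{E}_\infty$-rings on $X$, which may be checked on a basis. Refining an affine open cover of $X$ to distinguished affines $D(f)\simeq\Spec R_f$ (which form a basis), \autoref{prop:omnibusetale} and naturality of $\gamma$ identify the restriction of $\gamma_X$ over $D(f)$ with $\gamma_{\Spec R_f}$; its effect on global sections of structure sheaves is the map $R_f\to\cc{R}_{\Gamma(\Spec\Perf_{R_f},\cc{O})}$, which is an equivalence since $\Gamma(\Spec\Perf_{R_f},\cc{O})\simeq\Perf_{R_f}$ naturally by \autoref{thm:affinenessofschemes} and $\cc{R}_{\Perf_{R_f}}\simeq R_f$ by \autoref{def:endfunctor}. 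Thus $\gamma_X^{\#}$ is an equivalence on the basis $\{D(f)\}$, hence an equivalence, and together with \emph{(a)} this proves $\gamma_X$ is an equivalence.

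\emph{Main obstacle.} The only genuinely nonformal ingredient is step \emph{(a)} for affine noetherian $\Spec R$ --- the homeomorphism $\Spc\Perf_R\simeq\Spec R$ --- where Neeman's classification is used; the surrounding reduction is delicate only in verifying that the underlying map of $\gamma$ is the classical comparison morphism and in the continuity of $\Perf$ and of the Balmer spectrum needed to descend from the noetherian case. Everything else --- building $\gamma$, reducing along affine covers, and recovering the structure sheaf --- is routine manipulation of the \'etale-localization formalism of \autoref{prop:omnibusetale} together with the global-sections identification of \autoref{thm:affinenessofschemes} that our affineness criterion \autoref{thm:affineness} makes available.
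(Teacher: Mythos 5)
Your proposal is correct and follows essentially the same route as the paper: $\gamma$ is the (co)unit of the relative-spectrum adjunction identified via \autoref{thm:affinenessofschemes}, the equivalence is checked affine-locally using the \'etale base-change of \autoref{prop:omnibusetale}, reduced to noetherian rings by a filtered-colimit argument, and settled there by Neeman's classification. The only cosmetic differences are that you separate the underlying-space and structure-sheaf checks (the paper outsources the structured affine noetherian case to a single citation) and phrase the noetherian reduction via continuity of $\Perf$ and $\Spc$ rather than via the left-adjoint property of $\Spec^{\cc{G}'}_{\cc{G}}$ and \cite[Corollary 1.5.4]{lurieDerivedAlgebraicGeometryV}; both are justified within the paper's framework.
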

\begin{proof}
  From the natural identification of \autoref{thm:affineness}, it suffices to show the result with $\Spec \Perf_{(-)}$ replaced by $\Spec^{\cc{G}'}_{\cc{G}}$. Since the relative spectrum is left adjoint to restriction, one obtains a counit transformation
  \[
    (\Spec^{\cc{G}'}_{\cc{G}}, \cc{R}_{\cc{O}}) \to \mm{id} \in \Fun(\SpSch,\RTop^{\loc}_{\CAlg})^{[1]}
  \]
yielding the first part of the result. For the second, note that for any cover by affine open subsets $\bigcup_{I} \Spec R_{i} \subseteq X$, one has an identification
  \[
    (\Spec^{\cc{G}'}_{\cc{G}}X, \cc{R}_{\cc{O}}) \times_{X} \coprod_{I} \Spec R_{i} \simeq \coprod_{I} (\Spec^{\cc{G}'}_{\cc{G}}\Spec R_{i}, \cc{R}_{\cc{O}_{i}}) \simeq \coprod_{I} (\Spec \Perf_{R_{i}}, \cc{R}_{\cc{O}_{i}})
    \]
  from \hyperref[prop:omnibusetale3]{\autoref*{prop:omnibusetale}.(3)}. We thus reduce to the case where $X = \Spec R$ and may furthermore assume $R$ is a classical Noetherian ring by using that $\Spec^{\cc{G}'}_{\cc{G}}\colon \CAlg \to \LTop(\cc{G}')$ is a left adjoint, and that the restriction $\LTop(\cc{G}') \to \LTop(\cc{G})$ preserves filtered colimits \cite[Corollary 1.5.4]{lurieDerivedAlgebraicGeometryV}. This case is due to Neeman \cite{neemanChromaticTower1992}, see for example \cite[Theorem 4.48]{aokiHigherGeometriesTensor}.
\end{proof}

\begin{warn}
    Note that the proof above recovers the ordinary scheme $X$ regarded as an object of $\SpSch$ and not simply as an ordinary ringed space. In particular, the sections of the structure sheaf on any given open set may not be concentrated in $\pi_{0}$\footnote{The example $X = \bb{A}^{1} \setminus 0$ supplies a case where $\pi_{-1}\Gamma(X, \cc{O}) =: H^{1}(X, \cc{O})$ is nontrivial.}. The classically ringed space associated to $X$ may be recovered by taking the sheafification of the assignment $U \mapsto \pi_{0}\cc{O}_{X} \in \CAlg^{\heart}$, which is the context originally considered in  \cite{balmerPresheavesTriangulatedCategories2002}.
\end{warn}

\begin{remark}
  The previous strategy of proof demonstrates that for $X$ a qcqs spectral scheme, the underlying set of the Zariski spectrum stratifies into:
  \begin{enumerate}
  \item A ``geometric'' direction, corresponding to the support theory of the underlying scheme itself.
  \item A ``homotopical'' direction, corresponding to the failure of the comparison maps $|\! \Spec \Perf_{\cc{O}_{X,x}}\! | \to |\! \Spec \cc{O}_{X,x}|$ to be injective or surjective at points $x \in X$.
  \end{enumerate}
  For example, the result of \autoref{thm:ntbrecon} holds for regular, noetherian, locally even periodic schemes (for example, an oriented elliptic curve over a regular noetherian even periodic $\bb{E}_{\infty}$-ring) by reduction to the case of regular noetherian even periodic $\bb{E}_{\infty}$-ring spectra, which is handled in \cite[\S 2]{mathewThickSubcategoryTheorem2015a}. 
\end{remark}

\begin{remark}
  It is often the case that there are interesting specializations between points in distinct fibers of the comparison map $|\! \Spec \Perf_{X}\! | \to |X|$: see, for example, the ``blueshifting'' behaviour exhibited in the fibers of the map $|\! \Spec \Perf_{\bb{S}_{G}^{G}}\! | \to |\! \Spec \pi_{0}\bb{S}^{G}_{G}|$ where $G$ is a finite group and $\bb{S}_{G}^{G}$ refers to the categorical fixed points of the genuine $G$-sphere \cite[\S 13]{sanders2025tensortriangulargeometryfully}.
\end{remark}

Finally, let us mention the differences between our approach to the computation of $|\! \Spec \Perf_{X}|$ for an ordinary qcqs scheme $X$ and the approach adopted by Thomason in \cite{thomasonClassificationTriangulatedSubcategories1997}.

\begin{remark}\label{rem:differences}
We briefly recall the components of the argument in \cite{thomasonClassificationTriangulatedSubcategories1997}, which is a classification of the thick tensor ideals of $\Perf_{X}$. By the identification between the underlying space of $\Spec \Perf_{X}$ and the Balmer spectrum $\Spc \Perf_{X}$, this is equivalent to the desired computation, see for example \cite[\S 4.2]{aokiHigherGeometriesTensor} for a quick overview of the same.

First, let $X$ be a noetherian qcqs potentially non-affine scheme. The classification of thick tensor ideals consists of two steps:
\begin{enumerate}
    \item Demonstrating that there are objects of $\Perf_{X}$ whose cohomology groups are supported on any choice of closed subscheme $Z\subseteq X$. 
    \item Showing that any thick tensor ideal $\cc{I} \subseteq \Perf_{X}$ is determined by the union of the supports of cohomology groups of complexes in $\cc{I}$.
\end{enumerate}

The second component follows via a nilpotence theorem for $\Perf_{X}$, extending the results of \cite{hopkinsGlobalMethodsHomotopy1987} and \cite{neemanChromaticTower1992}. As both his and our approaches rely on these results in the affine case, the distinctions here are at most cosmetic. 

In order to demonstrate the first component, the author invokes a $K$-theory extension theorem to inductively build complexes with the desired support by inducting from affines, see \cite[Lemma 4.1.8]{KockPitsch17} for a brief overview of his method along with a systematization using the ``Reduction Principle'' of \cite{bondalGeneratorsRepresentabilityFunctors2003}. By constrast, we do not construct complexes supported on every possible closed subset of $X$. In fact, the comparison transformation $\gamma_{X}: \Spec \Perf_{X} \to X$ often fails to be surjective for a general spectral scheme, owing to the fact that complexes with prescribed support may not always be constructed. For an ordinary scheme, the existence of these complexes is deduced \emph{a posteriori} from the construction of a functorial comparison map and a direct reduction to the affine case.\footnote{In some sense, this too is almost cosmetic: the construction of requisite complexes is hidden in the iterative process of \autoref{thm:affineness}, but we will not expand upon this here.}
\end{remark}

%%% Local Variables:
%%% mode: LaTeX
%%% TeX-master: "../main"
%%% End:

\section{Geometrization of 2-rings}\label{sec:geometrization}

This section is dedicated to the proof of \autoref{thmalph:geometrization}, recorded as \autoref{thm:geom} below.
\subsection{An aside on unigenicity} Before proceeding, it will be helpful later to systematize the exact relation between rings and $2$-rings. Recall the following notion, discussed in \cite{hoveyAxiomaticStableHomotopy1997a} and \cite[\S 8]{sanders2025tensortriangulargeometryfully}.

\begin{definition}
  $\cc{K} \in \twoCAlg$ is \tdef{unigenic} if the smallest thick subcategory containing $\unit_{\cc{K}}$ is $\cc{K}$ itself. We write $\mdef{\twoCAlg^{\mm{uni}}} \subset \twoCAlg$ to denote the full subcategory of unigenic 2-rings.
\end{definition}

\begin{recollection}\label{cons:monogenicreflection}
  Recall the adjunction of \autoref{def:endfunctor}
  \[
    \Perf: \CAlg \rightleftarrows \twoCAlg: \cc{R_{(-)}}
  \]
  whose  left adjoint is fully faithful.  By \cite[7.1.2.7]{lurieHigherAlgebra} and \cite[7.6]{mathewNilpotenceDescentEquivariant2017}, the categories of the form $\Mod_{R} \in \CAlg(\mm{Pr^{L,\omega}_{st}})$ for $R \in \CAlg$ are uniquely categorized by the fact that their subcategories of compact objects are unigenic, from which it follows that that the embedding $\Perf:\CAlg \to \CAlg(\Cat^{\mm{perf}})$ has essential image exactly the full subcategory of unigenic 2-rings. Composing the adjunction above with the equivalence $\CAlg \simeq \twoCAlg^{\mm{uni}}$, we obtain an adjunction of the form
\[ F: \twoCAlg^{\mm{uni}} \rightleftarrows \twoCAlg :G \]  where $F$ is the inclusion. We will deduce a formula for $G$ below. 
 \end{recollection}

 \begin{lemma}\label{lem:monogenicreflection}
   The right adjoint $G'$ to the inclusion $\twoCAlg^{\mm{uni}} \hookrightarrow \twoCAlg$ is given by the functor $\cc{K} \mapsto \mm{Thick}(\unit_{\cc{K}})$, and the counit is identified with the canonical inclusion $\mm{Thick}(\unit_{\cc{K}}) \subseteq \cc{K}$. 
 \end{lemma}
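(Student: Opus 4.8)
The plan is to verify directly that $\cc{K}\mapsto\mm{Thick}(\unit_{\cc{K}})$, together with the canonical inclusion $\iota_{\cc{K}}\colon\mm{Thick}(\unit_{\cc{K}})\hookrightarrow\cc{K}$ as counit, exhibits $\twoCAlg^{\mm{uni}}\hookrightarrow\twoCAlg$ as a coreflective subcategory; since this inclusion is fully faithful that is exactly the assertion of the lemma. The first point is that $\mm{Thick}(\unit_{\cc{K}})$ is genuinely a sub-$2$-ring of $\cc{K}$. It is a stable, idempotent-complete subcategory by the very definition of a thick subcategory, and it contains $\unit_{\cc{K}}$, so it only remains to see that it is closed under $\otimes$. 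Fix $y\in\mm{Thick}(\unit_{\cc{K}})$ and consider the full subcategory $\{\,z\in\cc{K}\mid z\otimes y\in\mm{Thick}(\unit_{\cc{K}})\,\}$. Since $\otimes$ is exact in the first variable and $\mm{Thick}(\unit_{\cc{K}})$ is thick, this is a thick subcategory, and it contains $\unit_{\cc{K}}$ because $\unit_{\cc{K}}\otimes y\simeq y$, hence it contains $\mm{Thick}(\unit_{\cc{K}})$. Letting $y$ range over $\mm{Thick}(\unit_{\cc{K}})$ shows that $\mm{Thick}(\unit_{\cc{K}})$ is closed under $\otimes$, so $\iota_{\cc{K}}$ is a fully faithful morphism in $\twoCAlg$. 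Tautologically $\mm{Thick}(\unit_{\cc{K}})$ is unigenic, since the smallest thick subcategory of $\mm{Thick}(\unit_{\cc{K}})$ containing its unit is all of it.

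Next I would establish the universal property: for $\cc{L}\in\twoCAlg^{\mm{uni}}$, postcomposition with $\iota_{\cc{K}}$ induces an equivalence $\map_{\twoCAlg}(\cc{L},\mm{Thick}(\unit_{\cc{K}}))\simeq\map_{\twoCAlg}(\cc{L},\cc{K})$, naturally in $\cc{L}$. Given a morphism of $2$-rings $f\colon\cc{L}\to\cc{K}$, the fact that $f$ is symmetric monoidal forces $f(\unit_{\cc{L}})\simeq\unit_{\cc{K}}$, so the full subcategory of $\cc{L}$ consisting of those $x$ with $f(x)\in\mm{Thick}(\unit_{\cc{K}})$ is, by exactness of $f$ and thickness of $\mm{Thick}(\unit_{\cc{K}})$, a thick subcategory containing $\unit_{\cc{L}}$, hence all of $\cc{L}$ by unigenicity. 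Thus $f$ factors through the fully faithful $\iota_{\cc{K}}$; because $\iota_{\cc{K}}$ is fully faithful this factorization is essentially unique (the space of such factorizations is contractible), and it is automatically symmetric monoidal and exact since $\iota_{\cc{K}}$ is. The resulting identification of mapping spaces is natural in $\cc{L}$, so by the standard recognition of adjoint functors it assembles the assignment $\cc{K}\mapsto\mm{Thick}(\unit_{\cc{K}})$ into a functor $G'$ right adjoint to the inclusion, with counit $\iota_{\cc{K}}$, as claimed. (Note a priori existence of $G'$ is not immediate from an adjoint functor theorem, since the inclusion need not preserve colimits, which is why I would argue by direct construction.)

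I expect the only delicate step to be the $\infty$-categorical bookkeeping in the second paragraph — namely that ``factoring an exact symmetric monoidal functor through a fully faithful one'' is governed by a contractible space of choices and is compatible with the monoidal and exactness data. This is, however, the usual behaviour of fully faithful morphisms in $\Cat^{\perf}$ and in its $\infty$-category of commutative algebra objects, so no new input is needed; everything else is the familiar thick-subcategory argument of tt-geometry. Finally, this is consistent with \autoref{cons:monogenicreflection}: under the equivalence $\CAlg\simeq\twoCAlg^{\mm{uni}}$ the unigenic $2$-ring $\mm{Thick}(\unit_{\cc{K}})$ corresponds to $\Perf_{R}$ with $R=\End_{\mm{Thick}(\unit_{\cc{K}})}(\unit)\simeq\End_{\cc{K}}(\unit)=\cc{R}_{\cc{K}}$, the middle equivalence holding because $\iota_{\cc{K}}$ is fully faithful; this matches the formula $G=(\CAlg\simeq\twoCAlg^{\mm{uni}})\circ\cc{R}_{(-)}$ recorded there, so $G'$ and $G$ agree.
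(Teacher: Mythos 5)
Your proof is correct, but it takes a different route from the paper's. The paper does not construct the right adjoint: it already has one in hand from \autoref{cons:monogenicreflection}, where the adjunction $F \dashv G$ is obtained by transporting $\Perf \dashv \cc{R}_{(-)}$ across the equivalence $\CAlg \simeq \twoCAlg^{\mm{uni}}$ (which rests on the identification of unigenic $2$-rings with categories of perfect modules). Its proof then only has to \emph{identify} the counit: it observes, exactly as you do, that any morphism out of a unigenic $2$-ring lands in $\mm{Thick}(\unit_{\cc{K}})$, and uses this to exhibit $\mm{Thick}(\unit_{\cc{K}}) \to \cc{K}$ as a retract of the counit $G(\cc{K}) \to \cc{K}$ inside the slice $\twoCAlg^{\mm{uni}}_{-/\cc{K}}$; since the counit is a terminal object there, so is its retract, and the identification follows. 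This sidesteps both of the points you flag as delicate: it never needs to check that $\mm{Thick}(\unit_{\cc{K}})$ is closed under $\otimes$ as part of verifying a universal property (only that the inclusion is a morphism of $2$-rings with unigenic source), and it avoids the $\infty$-categorical bookkeeping about contractibility of the space of factorizations through a fully faithful functor, which your argument genuinely relies on (and which, as you say, does hold in $\CAlg(\Cat^{\perf})$ because fully faithful functors remain monomorphisms on mapping spaces of algebras). In exchange, your argument is self-contained: it establishes the coreflectivity of $\twoCAlg^{\mm{uni}}$ without invoking the Schwede--Shipley-type recognition of module categories, and your closing paragraph correctly reconciles your $G'$ with the paper's $G$ via uniqueness of right adjoints. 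Both are valid; the paper's is shorter given its setup, yours is more elementary in its inputs.
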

 \begin{proof}
   Let $\cc{K}\in \twoCAlg$ arbitrary. As the left adjoint is an inclusion, \cite[5.2.2.7]{lurieHigherToposTheory2009} implies that the counit  $\epsilon: G(\cc{K}) \to \cc{K}$ is a terminal object of $\twoCAlg^{\mm{uni}}_{-/\cc{K}}$. For $\unit_{\cc{K}} \in \cc{K}$, consider the thick subcategory $\mm{Thick}(\unit_{\cc{K}}) \hookrightarrow \cc{K}$, and note that this inclusion is a morphism in $\twoCAlg$ with unigenic domain. Furthermore, the image of any $F:\cc{L}\to \cc{K} \in \twoCAlg^{\mm{uni}}_{-/\cc{K}}$ must be contained in $\mm{Thick}(\unit_{\cc{K}})$, as $F(\mm{Thick}(\mathscr{S})) \subseteq \mm{Thick}(F(\mathscr{S}))$ for $\mathscr{S}\subseteq \cc{L}$ any set of objects. Thus, the counit map admits a lift through $G(\cc{K}) \to \mm{Thick}(\unit_{\cc{K}})$. Any such lift is split by lifting the canonical inclusion $\mm{Thick}(\unit_{\cc{K}}) \to \cc{K}$ through $\mm{Thick}(\unit_{\cc{K}}) \to G\cc{K} \to \mm{Thick}(\unit_{\cc{K}})$, which composes to the identity. As any retract of a terminal object must itself be terminal, we have an equivalence $\mm{Thick}(\unit_{\cc{K}}) \simeq G(\cc{K})$ and the counit is identified with the canonical inclusion. 
 \end{proof}

 \begin{definition}
   We refer to the right adjoint $G'$ above as the functor of \tdef{unitation}.
 \end{definition}

 \subsection{The main result}
 
The following is \autoref{coralph:univprop}, which is obtained as an immediate consequence of \autoref{thm:affinenessofschemes}.

\begin{corollary}\label{cor:univproperty}
  For any $\cc{K} \in \twoCAlg_{\rig}$ and qcqs spectral scheme $X$, there is an identification
  \[
    \map_{\twoCAlg_{\rig}}(\Perf_{X}, \cc{K}) \simeq \map_{\RTop^{\loc}_{\CAlg}}((\Spec \cc{K}, \cc{R}_{\cc{O_{K}}}), X)
  \]
  The same holds if $X$ is a Dirac spectral scheme and $\RTop^{\loc}_{\CAlg}$ is replaced by $\RTop^{\Dir}_{\CAlg}$.
\end{corollary}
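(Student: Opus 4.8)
The plan is to deduce this by pure formalism from \autoref{thm:affinenessofschemes}, the adjunction defining the relative spectrum, and the full faithfulness of $\Spec$ on rigid $2$-rings (\autoref{thm:zariski2subcanonical}), threading together a short chain of natural equivalences of mapping spaces. Write $\cc{G} = \GZarsp$ (resp. $\GDirsp$) and $\cc{G}' = \GZarst$, and recall that $\RTop^{\loc}_{\CAlg} = \LTop(\cc{G})^{\op}$ and that by \cite[Lemma 3.22]{aokiHigherGeometriesTensor} the pair $(\Spec \cc{K}, \cc{R}_{\cc{O_K}})$ is precisely the image of $\Spec^{\cc{G}'}\cc{K} \in \LTop(\cc{G}')$ under the forgetful functor $\mm{res}\colon \LTop(\cc{G}') \to \LTop(\cc{G})$ attached to the transformation of geometries $\cc{G} \to \cc{G}'$. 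Hence
\[
  \map_{\RTop^{\loc}_{\CAlg}}\big((\Spec \cc{K}, \cc{R}_{\cc{O_K}}), X\big) \simeq \map_{\LTop(\cc{G})}\big(X, \mm{res}(\Spec^{\cc{G}'}\cc{K})\big).
\]

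Next I would apply the adjunction $\Spec^{\cc{G}'}_{\cc{G}} \dashv \mm{res}$ of \autoref{def:relativespectrum}, which identifies the right-hand side with $\map_{\LTop(\cc{G}')}(\Spec^{\cc{G}'}_{\cc{G}} X, \Spec^{\cc{G}'}\cc{K})$. Since $X$ is qcqs, \autoref{thm:affinenessofschemes} supplies a natural equivalence $\Spec^{\cc{G}'}_{\cc{G}} X \simeq \Spec \Perf_X = \Spec^{\cc{G}'}\Perf_X$, and by \autoref{lem:perfissmall} the $2$-ring $\Perf_X$ is small and rigid, while $\cc{K}$ is rigid by hypothesis; so both objects lie in the essential image of the fully faithful functor $\Spec\colon \twoCAlg_{\rig} \to \LTop(\cc{G}')$ of \autoref{thm:zariski2subcanonical}, giving
\[
  \map_{\LTop(\cc{G}')}\big(\Spec^{\cc{G}'}\Perf_X, \Spec^{\cc{G}'}\cc{K}\big) \simeq \map_{\twoCAlg_{\rig}}(\Perf_X, \cc{K}).
\]
Concatenating these equivalences proves the corollary; running the identical argument with $\GDirsp$ in place of $\GZarsp$ and $\RTop^{\Dir}_{\CAlg}$ in place of $\RTop^{\loc}_{\CAlg}$, using the Dirac half of \autoref{thm:affinenessofschemes}, yields the Dirac statement.

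Each equivalence in the chain is natural in $\cc{K}$ and in $X$: the identification of $(\Spec\cc{K},\cc{R}_{\cc{O_K}})$ with a restriction is definitional, the adjunction isomorphism is natural, \autoref{thm:affinenessofschemes} provides a natural identification, and a fully faithful embedding induces natural equivalences on mapping spaces — so the composite is natural in both variables, which is what is needed for the applications in \autoref{sec:geometrization}. There is no genuine obstacle here: the argument is bookkeeping, and the only points requiring care are keeping the variances consistent — that $\RTop^{\loc}_{\CAlg}$ and $\RTop^{\loc}_{\twoCAlg}$ are the opposites of $\LTop(\cc{G})$ and $\LTop(\cc{G}')$, that $\Spec^{\cc{G}'}_{\cc{G}}$ is the \emph{left} adjoint to restriction, and that $\Spec^{\cc{G}'}\colon \twoCAlg \to \LTop(\cc{G}')$ is covariant (in contrast to the usual contravariant $\Spec\colon \twoCAlg^{\op}\to\RTop^{\loc}_{\twoCAlg}$) — together with the invocation of \autoref{lem:perfissmall} to guarantee that $\Perf_X$ is a genuine small rigid $2$-ring so that \autoref{thm:zariski2subcanonical} applies at the last step.
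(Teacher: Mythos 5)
Your proposal is correct and follows essentially the same route as the paper: full faithfulness of $\Spec$ on rigid $2$-rings (\autoref{thm:zariski2subcanonical}), the identification $\Spec^{\cc{G}'}_{\cc{G}}X \simeq \Spec\Perf_X$ from \autoref{thm:affinenessofschemes}, and the adjunction $\Spec^{\cc{G}'}_{\cc{G}} \dashv \mm{res}$ together with the identification of $\mm{res}(\Spec\cc{K})$ with $(\Spec\cc{K},\cc{R}_{\cc{O}_{\cc{K}}})$. You merely make explicit the adjunction and variance bookkeeping that the paper's two-line proof leaves implicit.
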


\begin{proof}
  By \autoref{thm:zariski2subcanonical} the functor $\Spec:\twoCAlg^{\op}_{\rig} \to \RTop^{\loc}_{\twoCAlg}$ is fully faithful. The claimed result thus reduces to the identification
  \[
    \map_{\RTop^{\loc}_{\twoCAlg}}(\Spec \cc{K}, \Spec \Perf_{X}) \simeq \map_{\RTop^{\loc}_{\CAlg}}((\Spec \cc{K}, \cc{R}_{\cc{O_{K}}}), X)
  \]
  which is the content of \autoref{thm:affinenessofschemes}. The same argument holds in the Dirac case.
\end{proof}

We may proceed to the proof of \autoref{thmalph:geometrization}.

\begin{theorem}\label{thm:geom}
  Let $\cc{K} \in \twoCAlg_{\rig}$ be a rigid $2$-ring and $X \in \SpSch$ admitting an abstract equivalence
  \[
    \alpha: (\Spec \cc{K}, \cc{R_{O_{K}}}) \simeq X \in (\RTop^{\loc}_{\CAlg})^{[1]}
  \]
  Then there is a fully faithful functor $\Perf_{X} \to \cc{K}$ realizing the equivalence $\alpha$ upon passage to Zariski spectra. The same result holds in the Dirac case.
\end{theorem}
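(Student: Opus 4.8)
The plan is to invoke the universal property of \autoref{cor:univproperty}. Note first that the hypothesis $\alpha\colon(\Spec\cc{K},\cc{R_{O_{K}}})\simeq X$ forces $X$ to be qcqs, since the underlying $\infty$-topos of $\Spec\cc{K}$ is coherent by \autoref{ex:balmerspectrumiscoherent}. Thus \autoref{cor:univproperty} applies and produces a morphism of $2$-rings $\phi:=\phi_{\alpha}\colon\Perf_{X}\to\cc{K}$ corresponding to $\alpha$ under the natural equivalence $\map_{\twoCAlg_{\rig}}(\Perf_{X},\cc{K})\simeq\map_{\RTop^{\loc}_{\CAlg}}((\Spec\cc{K},\cc{R_{O_{K}}}),X)$ (resp.\ its Dirac analogue). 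That $\phi$ realizes $\alpha$ upon passage to Zariski spectra is then essentially tautological: unwinding the identification of \autoref{cor:univproperty}, the morphism $\Spec\phi\colon\Spec\cc{K}\to\Spec\Perf_{X}\simeq\Spec^{\cc{G}'}_{\cc{G}}X$ recovers $\alpha$ after restriction along $\cc{R}_{(-)}$ and composition with the comparison $\gamma_{X}$. The whole content is therefore fully faithfulness of $\phi$.

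The crucial special case is when $X$ is quasi-affine, i.e.\ a quasicompact open subscheme of an affine $\Spec A$. Writing $R_{X}:=\Gamma(X,\cc{O}_{X})$, I claim $\Perf_{X}$ is unigenic: the quasicompact open immersion $j\colon X\hookrightarrow\Spec A$ exhibits $\QCoh(X)$ as a Bousfield localization of $\Mod_{A}$ along which $\cc{O}_{X}=j^{\ast}\cc{O}_{\Spec A}$ is the image of the compact generator, so $\cc{O}_{X}$ is a compact generator of $\QCoh(X)$; since $X$ is qcqs, $\Perf_{X}=\QCoh(X)^{\omega}=\Thick(\cc{O}_{X})$ is unigenic (cf.\ \cite{lurieSpectralAlgebraicGeometry} for these facts about quasi-affine spectral schemes; the Dirac case is identical). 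By \autoref{cons:monogenicreflection} this yields $\Perf_{X}\simeq\Perf_{R_{X}}$, whence $\map_{\twoCAlg_{\rig}}(\Perf_{X},\cc{K})\simeq\map_{\CAlg}(R_{X},\cc{R}_{\cc{K}})$ via $\Perf\dashv\cc{R}_{(-)}$; by naturality of \autoref{cor:univproperty} in $X$, the element $\alpha$ corresponds under this chain to the equivalence $\Gamma(\alpha)\colon R_{X}=\Gamma(X,\cc{O}_{X})\xrightarrow{\ \sim\ }\Gamma(\Spec\cc{K},\cc{R_{O_{K}}})=\cc{R}_{\cc{K}}$. Hence $\phi$ factors as $\Perf_{R_{X}}\xrightarrow{\ \Perf(\Gamma(\alpha))\ }\Perf_{\cc{R}_{\cc{K}}}\xrightarrow{\ \varepsilon\ }\cc{K}$, where the first arrow is an equivalence and, by \autoref{lem:monogenicreflection}, the counit $\varepsilon$ of $\Perf\dashv\cc{R}_{(-)}$ is the inclusion $\Thick(\unit_{\cc{K}})\hookrightarrow\cc{K}$, which is fully faithful. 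So $\phi$ is fully faithful whenever $X$ is quasi-affine.

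The general case is handled by descent. Pick a finite cover of $X$ by affine opens $U_{i}\simeq\Spec R_{i}$; via $\alpha$ these correspond to quasicompact opens $U(a_{i})\subseteq\Spc\cc{K}$ covering $\Spc\cc{K}$, and every finite intersection $U_{i_{0}}\cap\dots\cap U_{i_{p}}$ is a quasicompact open subscheme of $\Spec R_{i_{0}}$, hence quasi-affine, and is identified via the restriction of $\alpha$ with $(\Spec(\cc{K}/\langle a_{i_{0}},\dots,a_{i_{p}}\rangle),\cc{R}_{\cc{O}})$ — here $\cc{K}/\langle a_{i_{0}},\dots,a_{i_{p}}\rangle$ is again rigid, being a Karoubi quotient of a rigid $2$-ring. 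Since $X\mapsto\Perf_{X}$ is a sheaf for the canonical topology (\autoref{lem:sheafifyingperf}) and $\cc{K}\simeq\Gamma(\Spec\cc{K},\cc{O}_{\cc{K}})$ (\autoref{thm:zariski2subcanonical}) with $\cc{O}_{\cc{K}}$ the sheaf of $2$-rings sending $U(a)$ to $\cc{K}/\langle a\rangle$ (\autoref{thm:zariskiisbalmer}), both $\Perf_{X}$ and $\cc{K}$ are the limits over $\Delta$ of the \v{C}ech cosimplicial diagrams $[p]\mapsto\prod_{i_{0},\dots,i_{p}}\Perf_{U_{i_{0}}\cap\dots\cap U_{i_{p}}}$ and $[p]\mapsto\prod_{i_{0},\dots,i_{p}}\cc{K}/\langle a_{i_{0}},\dots,a_{i_{p}}\rangle$ respectively, and by naturality of \autoref{cor:univproperty} the morphism $\phi$ is induced by a map between these two cosimplicial diagrams which in each level is the product of the comparison maps attached to the quasi-affine pieces. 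By the previous paragraph every such map is fully faithful, and since mapping spectra in a limit of $2$-rings are computed levelwise, the induced map on totalizations — namely $\phi$ — is fully faithful. The same argument, word for word, proves the Dirac statement, using the Dirac versions of \autoref{cor:univproperty}, \autoref{lem:sheafifyingperf} and the preceding results.

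The main obstacle is the quasi-affine case, and within it the geometric input that $\cc{O}_{X}$ generates $\QCoh(X)$ (so that $\Perf_{X}$ is unigenic), together with the naturality bookkeeping identifying the ring map underlying $\phi$ with the equivalence $\Gamma(\alpha)$; once these are in place the gluing is purely formal. One further point to check carefully is that $\phi_{\alpha}$ restricts, on each (quasi-)affine member of the chosen cover, to exactly the comparison map attached by \autoref{cor:univproperty} to the restricted equivalence — this is a naturality property of the identification in \autoref{cor:univproperty}, routine but worth making explicit.
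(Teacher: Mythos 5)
Your argument for fully faithfulness is essentially the paper's: both reduce to the quasi-affine case, where the geometric input is that $\Perf$ of a qc quasi-affine scheme is unigenic (equivalently, $\Perf_{\Gamma(U,\cc{O}_X)} \to \Perf_U$ is an equivalence), and then \autoref{lem:monogenicreflection} identifies the comparison functor with the inclusion of $\Thick(\unit)$ into the target. The only cosmetic difference is in the gluing: you totalize a finite \v{C}ech cover, whereas the paper writes $\Perf_X \to \cc{K}$ as the limit of the comparison maps over the basis of all qc quasi-affine opens $U \subseteq X$, using that both sides are sheaves (\autoref{lem:sheafifyingperf}, \autoref{thm:zariskiisbalmer}); either works, and the compatibility of $\phi$ with restriction that you flag as ``naturality bookkeeping'' is handled in the paper by the commuting square of Karoubi quotients supplied by \autoref{thm:zariskiisbalmer}.

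There is, however, a genuine gap in your treatment of the clause ``realizing the equivalence $\alpha$ upon passage to Zariski spectra,'' which you dismiss as essentially tautological. What is tautological is only that the composite $(\Spec\cc{K}, \cc{R}_{\cc{O}_\cc{K}}) \to (\Spec\Perf_X, \cc{R}_{\cc{O}_{\Spec\Perf_X}}) \to X$ recovers $\alpha$. The substantive claim is that the first map $\Spec\cc{K} \to \Spec\Perf_X$ is itself an equivalence (in particular a homeomorphism on underlying spaces), and this does \emph{not} follow formally from fully faithfulness of $\Perf_X \to \cc{K}$: fully faithful functors of $2$-rings need not induce surjections on Balmer spectra. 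The paper supplies the missing input by citing that $|\!\Spec\cc{K}| \to |\!\Spec\Perf_X|$ is a \emph{strong spectral quotient map} (\cite[Theorem 4.1]{sanders2025tensortriangulargeometryfully}), hence a homeomorphism if and only if it is injective, and injectivity follows because the composite down to $|X|$ is the homeomorphism underlying $\alpha$. You should add this step (or an equivalent surjectivity argument) to complete the proof of that part of the statement.
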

\begin{proof}
  The assumption forces $X$ to be qcqs, from which \autoref{cor:univproperty} implies that the map $\alpha$ induces a functor $\Perf_{X} \to \cc{K}$. Now let $U \subseteq X$ be any quasicompact quasi-affine open subset. By the identification of \autoref{thm:zariskiisbalmer}, there is an associated commuting square of Karoubi quotients 
  \[\xymatrix{
      \Perf_{X} \ar[d] \ar[r] & \cc{K} \ar[d]\\
      \Perf_{U} \ar[r] &  \cc{O_{K}}(U).
    }\]
  Moreover, since $\alpha$ is assumed to be an equivalence of spectrally ringed spaces, it induces an identification \[\Gamma(U, \cc{O_{X}}) \simeq \Gamma(U, \cc{R_{O_{K}}}) \simeq \cc{R_{\cc{O_{K}}}(U)}\] and hence the associated functor $\Perf_{\Gamma(U, \cc{O_{X}})} \to \cc{O_{K}}(U)$ factors through an equivalence
  \[
    \Perf_{\Gamma(U, \cc{O_{X}})} \simeq \Perf_{\cc{R}_{O_{\cc{K}}(U)}}
    \] with the unitation of $\cc{O}_{\cc{K}}(U)$. Invoking \autoref{lem:monogenicreflection}, we learn that the map $\Perf_{\Gamma(U, \cc{O_{X}})} \to \cc{O_{K}}(U)$ must be fully faithful and in fact induces an equivalence between the source and the thick subcategory generated by the unit in the target. Furthermore, since $U$ is assumed to be qc quasi-affine, the canonical map $\Perf_{\Gamma(U, \cc{O_{X}})} \to \Perf_{U}$ must be an equivalence.  Altogether, we obtain that the composite map \[\Perf_{\Gamma(U, \cc{O_{X}})} \simeq \Perf_{U} \to \cc{O_{K}}(U)\] is fully faithful. Since the qc quasi-affine open subsets form a basis of $X$, it follows that the map $\Perf_{X} \to \cc{K}$ may be presented as a limit of fully faithful functors
  \[
    \varprojlim_{U \subseteq X \text{ qc quasi-affine}} [\Perf_{U} \to \cc{O_{K}}(U)] \in \twoCAlg_{\rig}^{[1]}
  \]
  and is thus itself fully faithful. The argument does not change for the Dirac case.

  The statement that the passage to Zariski spectra realizes the map $\alpha$ arises from the fact that the composite 
  \[
  (\Spec \cc{K}, \cc{R_{O_K}}) \to (\Spec \Perf_{X}, \cc{R}_{\cc{O}_{\Spec \Perf_{X}}}) \to X 
  \]
  in $\RTop^{\loc}_{\CAlg}$ realizes the equivalence $\alpha$, by \autoref{thm:affinenessofschemes} and the universal property of the relative spectrum. Moreover, the map of spaces $|\! \Spec \cc{K}| \to |\! \Spec \Perf_{X}|$ is a strong spectral quotient map by \cite[Theorem 4.1]{sanders2025tensortriangulargeometryfully}, implying in particular that it is a homeomorphism if and only if it is injective \cite[Corollary 2.26]{sanders2025tensortriangulargeometryfully}. However, since $\alpha$ is an equivalence, it is necessarily injective, yielding the claim.
\end{proof}

We collect some key examples below.

\begin{example}[Stable module categories]\label{ex:stablemodule}
  Let $G$ be a finite group of order divisible by $p$, and $k$ be a field of characteristic $p$. The \tdef{stable module category} of $G$ over $k$ is the object $\mdef{\mm{St}_{kG}} \in \twoCAlg_{\rig}$ defined by the Verdier quotient
  \[
    \mm{St}_{kG} \simeq \mm{Rep}_{G}(k)/\mm{Proj}_{G}(k)
  \]
  where $\mm{Proj}_{G}(k)$ is the thick subcategory generated by the image of the projective representations in $\mm{Rep}_{G}(k)$ (here by $\mm{Rep}$, we really mean the derived category of finite-dimensional representations of $G$ over $k$). The Balmer spectrum of $\mm{St}_{kG}$ is computed in \cite{Benson1997}, where it is shown to be homeomorphic to $\mm{Proj}~H^{\ast}(G, k)$.  \cite[Proposition 8.8]{balmerSpectraSpectraSpectra2010} demonstrates more, namely this homeomorphism is realized on underlying topoi via the comparison map
  \[
    (\Spec \mm{St}_{kG}, \cc{R_{O}}) \to \Spec^{\GDirsp}\cc{R}_{\mm{St}_{kG}} \in \RTop^{\Dir}_{\CAlg}
  \]
  and the same argument easily shows that the above is an equivalence in locally spectrally ringed spaces. However, as a spectrally ringed space, the object $\Spec^{\GDirsp}\cc{R}_{\mm{St}_{kG}}$ is in fact an \emph{ordinary} spectral scheme, given by the space $\mm{Proj}~H^{\ast}(G, k)$ equipped with the sheaf of spectra
  \[
    \{U(f) \subseteq \mm{Proj}~H^{\ast}(G, k)\} \mapsto \cc{R}_{\mm{St}_{kG}}[f^{-1}].
  \]
  We refer to this spectral scheme as the \tdef{spectral support variety} of $G$ over $k$, and use the notation $\mdef{\cc{V}_{G}}$ to denote it. By the result of \autoref{thm:geom}, there is a fully faithful functor $\Perf_{\cc{V}_{G}} \to \mm{St}_{kG}$. Moreover, it can be shown that this embedding is functorial in the group $G$.
\end{example}

The primary motivation for treating the Dirac spectral setting above is the following example, which shows that Dirac spectral schemes ought also abound in nature.

\begin{example}(Permutation modules)\label{ex:permmods}
  Let $G$ be a group and $R \in \CAlg^{\heart}$ be an ordinary commutative Noetherian ring. A \tdef{permutation $R[G]$-module} is an $R[G]$ module of the form $R[X]$ for $X \in \mm{Set}^{BG}$ a $G$-set. Let $\mdef{\mm{perm}(G, R)} \subseteq \Mod^{\heart}_{R[G]}$ denote the additive category of finitely generated permutation $R[G]$-modules. The tensor-product of $R[G]$-modules restricts to $\mm{perm}(G,k)$, and the latter is thus an additively monoidal category.

  Consider the bounded homotopy category of chain complexes in $\mm{perm}(G,k)$, denoted $\mdef{\cc{K}(G,R)}$. This is a rigid tensor-triangulated category whose Balmer spectra are computed in \cite{balmerGeometryPermutationModules2025} for the case where $R$ is a field, and in  \cite{dubeyBalmerSpectrumIntegral2025} for the case where $R$ is characteristic $p$ or a $p$-torsion free Noetherian ring.

  For $G = E$ an elementary abelian group, the Balmer spectrum $\Spc \cc{K}(G,R)$ equipped with the unique structure sheaf satisfying
  \[
    \cc{O}_{E}: \{U(a) \subseteq \Spc \cc{K}(G,R) \text{ quasicompact open}\} \mapsto \pi_{\ast}\End_{\cc{K}(G,R)/\langle a \rangle}(\unit)
  \]
  is shown to be a Dirac scheme in \cite[Corollary 15.4]{balmerGeometryPermutationModules2025}, \cite[Corollary 10.12]{dubeyBalmerSpectrumIntegral2025}. For any given enhancement of $\cc{K}(G,R)$ to the structure of a $2$-ring, this result will also demonstrate that the Dirac-locally spectrally ringed $\infty$-topos
  \[
    \mdef{\cc{V}_{G}^{\mm{perm}}} \coloneq (\Spec \cc{K}(G,R), \cc{R_{O}}) \in \RTop^{\Dir}_{\CAlg}
  \]
  is a Dirac spectral scheme, through which \autoref{thm:geom} will supply a fully faithful embedding $\Perf_{\cc{V}^{\mm{perm}}_{G}} \hookrightarrow \cc{K}(G,R)$. We hope to return to this observation in future work. 
\end{example}

Finally, we remark that the same proof as in \autoref{thm:geom} actually supplied a slightly stronger statement.

\begin{theorem}\label{thm:slightlybettergeom}
    Let $\cc{K} \in \twoCAlg_{\rig}$ be a rigid $2$-ring and $X \in \SpSch_{\mm{qcqs}}$ admit a map 
    \[
    \alpha: (\Spec \cc{K}, \cc{R}_{\cc{O}_K}) \to X \in (\RTop^{loc}_{\CAlg})^{[1]}
    \]
    such that the associated map $\cc{O}_{X} \to \alpha_{\ast} \cc{R}_{O_K}$ is an equivalence in $\Shv(X;\CAlg)$. Then there is a fully faithful functor $\Perf_{X} \to \cc{K}$.
\end{theorem}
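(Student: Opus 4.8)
The plan is to rerun the argument of \autoref{thm:geom} almost verbatim, isolating the single step where the hypothesis that $\alpha$ is an equivalence was used and feeding in the weaker assumption instead. First I would note that, since $X$ is qcqs and $\cc{K}$ is rigid, \autoref{cor:univproperty} converts $\alpha$ into a morphism $\phi\colon\Perf_X\to\cc{K}$ in $\twoCAlg_{\rig}$; via \autoref{thm:affinenessofschemes} and the full faithfulness of $\Spec$ on rigid $2$-rings (\autoref{thm:zariski2subcanonical}), $\phi$ corresponds to a morphism $\psi\colon\Spec\cc{K}\to\Spec\Perf_X$ in $\RTop^{\loc}_{\twoCAlg}$ with $\phi=\Gamma(\psi)$, whose underlying morphism in $\RTop^{\loc}_{\CAlg}$, composed with the comparison map $\gamma_X$ of \autoref{thm:ntbrecon}, recovers $\alpha$. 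The goal is then to prove $\phi$ fully faithful.

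Next I would localize over a cover. Fix a finite affine open cover $X=\bigcup_{i=1}^n U_i$, and for a multi-index $s$ write $U_s$ for the corresponding intersection; since $X$ is quasiseparated each $U_s$ is a qc quasi-affine open subscheme of $X$, hence a qcqs spectral scheme. Arguing as in the proof of \autoref{thm:ntbrecon} --- pulling the cover back along $\gamma_X$ and $\psi$, and using \autoref{prop:omnibusetale} together with \autoref{thm:affinenessofschemes} applied to the $U_s$ --- one gets open covers $\{\gamma_X^*U_i\}$ of $\Spec\Perf_X$ and $\{\alpha^*U_i\}$ of $\Spec\cc{K}$ with $(\Spec\Perf_X)_{/\gamma_X^*U_s}\simeq\Spec\Perf_{U_s}$, and since the structure sheaves are sheaves and $\cc{K}=\Gamma(\Spec\cc{K},\cc{O}_{\cc{K}})$, restricting $\psi$ over these covers exhibits $\phi$ as the totalization $\varprojlim_{\Delta}\bigl[\phi_{U_\bullet}\colon\Perf_{U_\bullet}\to\cc{O}_{\cc{K}}(\alpha^*U_\bullet)\bigr]$ over the \v{C}ech nerve. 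A totalization of fully faithful functors is fully faithful, so it is enough to treat each $\phi_{U_s}$.

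For fixed qc quasi-affine $U:=U_s$ I would set $\cc{L}:=\cc{O}_{\cc{K}}(\alpha^*U)$, which is rigid, being a limit of Karoubi quotients $\cc{K}/\langle a\rangle$ of the rigid $\cc{K}$ (\autoref{thm:zariskiisbalmer}). Applying $\cc{R}_{(-)}$ to $\phi_U\colon\Perf_U\to\cc{L}$, i.e.\ passing to endomorphism rings of the units, recovers the map $\Gamma(U,\cc{O}_X)\to\Gamma(\alpha^*U,\cc{R}_{\cc{O_K}})=\cc{R}_{\cc{L}}$ gotten from $\alpha$ by restriction --- here one uses that $\cc{R}_{(-)}$ preserves the limits computing sections and that the restriction $\LTop(\GZarst)\to\LTop(\GZarsp)$ carries $\Spec\cc{K}$ to $(\Spec\cc{K},\cc{R}_{\cc{O_K}})$ --- and this map is an equivalence precisely because $\cc{O}_X\to\alpha_*\cc{R}_{\cc{O_K}}$ is. This is the one spot where the proof of \autoref{thm:geom} invoked that $\alpha$ is an equivalence, and the present hypothesis does the same job. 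Since $U$ is qc quasi-affine the canonical functor $\Perf_{\Gamma(U,\cc{O}_X)}\to\Perf_U$ is an equivalence, so by the adjunction $\Perf\dashv\cc{R}_{(-)}$ (\autoref{def:endfunctor}) and full faithfulness of $\Perf$, the functor $\phi_U$ is, up to these equivalences, the counit $\Perf_{\cc{R}_{\cc{L}}}\to\cc{L}$; by \autoref{cons:monogenicreflection} and \autoref{lem:monogenicreflection} this is the inclusion $\mm{Thick}(\unit_{\cc{L}})\hookrightarrow\cc{L}$ of the unigenic reflection, which is fully faithful. This yields the theorem, and the Dirac case is identical.

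The main point to get right --- the only genuine divergence from \autoref{thm:geom} --- is that $\alpha$ need no longer be a homeomorphism on underlying spaces, so the pulled-back family $\{\alpha^*U_i\}$ need not be a basis of $\Spec\cc{K}$; consequently the closing ``limit of fully faithful functors'' must be organized as a \v{C}ech totalization over a fixed finite affine cover (available, and with quasi-affine intersections, by quasicompactness and quasiseparatedness of $X$) rather than as a limit over a basis. For the same reason one no longer obtains --- and this time does not claim --- that $\phi$ realizes $\alpha$ on Zariski spectra, since the corresponding step of \autoref{thm:geom} relied on injectivity of the underlying map of spaces coming from $\alpha$ being an equivalence. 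Everything else is imported directly from the proof of \autoref{thm:geom}.
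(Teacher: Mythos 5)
Your proof is correct and coincides with the paper's, which establishes this theorem simply by noting that the argument of \autoref{thm:geom} only uses the equivalence hypothesis through the induced identification $\Gamma(U,\cc{O}_X)\simeq\cc{R}_{\cc{O}_{\cc{K}}(\alpha^{*}U)}$ for qc quasi-affine $U$ --- precisely the weakened hypothesis assumed here. Your \v{C}ech reorganization of the closing limit is a fine (if not strictly necessary) precaution: one could instead keep the limit over the poset of qc quasi-affine opens of $X$, evaluated against the pushforward sheaf $\alpha_{*}\cc{O}_{\cc{K}}$, whose global sections are still $\cc{K}$, so nothing actually requires the family $\{\alpha^{*}U\}$ to form a basis of $\Spec\cc{K}$.
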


%%% Local Variables:
%%% mode: LaTeX
%%% TeX-master: "../main"
%%% End:

\printbibliography

\end{document}